\theoremstyle{plain}
\newtheorem{theorem}{Theorem}[section]
\newtheorem{proposition}[theorem]{Proposition}
\newtheorem{lemma}[theorem]{Lemma}
\newtheorem{corollary}[theorem]{Corollary}
\newtheorem{claim}{Claim}
\newtheorem*{claim*}{Claim}
\theoremstyle{definition}
\newtheorem{convention}[theorem]{Convention}
\newtheorem{definition}[theorem]{Definition}
\newtheorem{example}[theorem]{Example}
\newtheorem{remark}[theorem]{Remark}
\numberwithin{equation}{section}
\renewcommand{\phi}{\varphi}
\renewcommand{\epsilon}{\varepsilon}
\newcommand{\mc}[1]{\mathcal{#1}}
\newcommand{\inv}{^{-1}}
\newcommand{\fl}[1]{\mathscr{#1}}
\newcommand{\tdlc}{tdlc\@\xspace}
\newcommand{\Zb}{\mathbb{Z}}
\newcommand{\Nb}{\mathbb{N}}
\DeclareMathOperator{\id}{id}
\DeclareMathOperator{\Aut}{Aut}
\DeclareMathOperator{\Sym}{Sym}
\DeclareMathOperator{\proj}{proj}
\DeclareMathOperator{\Fix}{Fix}
\DeclareMathOperator{\Stab}{Stab}
\DeclareMathOperator{\conv}{conv}
\DeclareMathOperator{\COS}{\mathsf{CO}}
\DeclareMathOperator{\dist}{dist}
\begin{document}
	
\title{On flat groups in affine buildings}

\author{Sebastian \textsc{Bischof}}
\email{sebastian.bischof@math.uni-paderborn.de}
\address{Justus-Liebig-Universität Gießen, Mathematisches Institut, 35392 Gießen, Germany}

\thanks{Mathematics Subject Classification 2020: 20E42, 22D05}

\begin{abstract}
	In this article we work out the details of flat groups of the automorphism group of locally finite Bruhat--Tits buildings.
\end{abstract}

\maketitle

\section{Introduction}

Buildings have been introduced by Tits in order to study semi-simple algebraic groups from a geometrical point of view. If the building is locally finite, then the automorphism group of that building endowed with the permutation topology becomes a totally disconnected, locally compact (\tdlc) group. The scale and tidy subgroups for certain automorphisms of buildings were studied in \cite{BPR19}. Moreover, the flat rank of the automorphism group of buildings had been computed in \cite{BRW07}.

The purpose of these notes is to see the details for flat subgroups of the automorphism group of an affine building. Willis has shown in \cite[Proposition~$2.7.7$]{willis2025flatgroups} that for each flat group $\fl{H}$ of finite rank and each subgroup $U$ tidy for $\fl{H}$ there exists a decomposition of $U$ into \emph{eigenfactors}. In addition, each such eigenfactor is associated with a \emph{root} $\rho: \fl{H} \to \Zb$, which is a suitable surjective homomorphisms. Willis has already mentioned that the eigenfactors are analogues of root subgroups and that the root homomorphisms are analogues of roots of Cartan subgroups. In this article we will see the analogy in more detail.

\subsection*{Main result}

Let $\Delta$ be an affine building, let $\Sigma$ be an apartment of $\Delta$ and let $\fl{H}$ be the set of all inner automorphisms of translations of the apartment $\Sigma$. We show that the intersection $U$ of the stabilizer of a chamber in the apartment $\Sigma$ with the group of special automorphisms of $\Delta$ is tidy for $\fl{H}$ and each $U$-eigenfactor $E \neq U_{\fl{H}0}$ of $\fl{H}$ is the fixator of some root in $\Sigma$. We discuss in Section \ref{Section: Computing set of roots} a strategy how to compute the set of roots $\Phi(\fl{H})$ of a flat group $\fl{H}$ provided all subgroups $U_{\epsilon}$ in the decomposition in \cite[Proposition~$2.4.2$]{willis2025flatgroups} are $U$-eigenfactors of $\fl{H}$. Using this algorithm we show that for the given flat group $\fl{H}$ the set of roots $\Phi(\fl{H})$ is in one-to-one correspondence with the set of roots of the Coxeter system at infinity. In particular, for $\rho \in \Phi(\fl{H})$ we have $U_{\rho} = \Fix(\gamma)$ for some root $\gamma \in \Phi(W, S)$.

\subsection*{Acknowledgement}

I am very grateful to George Willis for many helpful discussions during an extended research stay in Newcastle. The pleasant atmosphere in his research group has encouraged my work enormously. I thank Jacqui Ramagge for her comments on an earlier draft. The research leading to this article was funded by a fellowship of the DAAD (57556280), by Justus-Liebig-Universit\"at Gie\ss en during the authors PhD studies, and by the DFG Walter Benjamin project BI 2628/1-1.

\section{Preliminaries}

\subsection{Coxeter systems}

Let $W$ be a group and let $S \subseteq W$ be a generating set of elements of order $2$. For $s, t \in S$ we denote the order of $st$ in $W$ by $m_{st}$. Then the pair $(W, S)$ is called \emph{Coxeter system} if the group $W$ admits the presentation
\[ W \cong \langle S \mid (st)^{m_{st}} = 1 \rangle, \]
where there is one relation for each pair $\{s, t\} \subseteq S$ (possibly $s=t$) with $m_{st} < \infty$.

\begin{example}\label{Example: Coxeter system}
	Let $W := \Sym(n+1), s_i := (i \text{ } i+1)$ and $S = \{s_1, \ldots, s_n\}$. Then the pair $(W, S)$ is a Coxeter system.
\end{example}

Let $(W, S)$ be a Coxeter system and let $\ell: W \to \Nb, w \mapsto \min\{ k\in \Nb \mid \exists s_1, \ldots, s_k \in S: w = s_1 \cdots s_k \}$ denote the corresponding length function. The \emph{rank} of a Coxeter system is the cardinality of the set $S$. It is well-known that for $J \subseteq S$ the pair $(\langle J \rangle, J)$ is a Coxeter system (cf.\ \cite[Ch.\ IV, $\S1.8$ Theorem $2$]{Bo02}). 

The \emph{Coxeter diagram} corresponding to $(W, S)$ is the labeled graph $(S, E(S))$ where $E(S) = \{ \{s, t \} \mid m_{st}>2 \}$ and where each edge $\{s,t\}$ is labeled by $m_{st}$ for all $s, t \in S$. We sometimes call the underlying Coxeter diagram the \emph{type} of $(W, S)$. The Coxeter system is called \emph{irreducible} if the underlying Coxeter diagram is connected; otherwise it is called \emph{reducible}.

\begin{remark}
	Suppose that $(W, S)$ is reducible and let $C$ be the vertex set of a connected component of the Coxeter diagram corresponding to $(W, S)$. Then $W \cong \langle C \rangle \times \langle S \backslash C \rangle$ and, hence, $(W, S)$ can be seen as the direct product of two smaller Coxeter systems. This motivates the term \emph{reducible}.
\end{remark}

We call $J \subseteq S$ \emph{spherical} if $\langle J \rangle$ is finite. Given a spherical subset $J\subseteq S$, there exists a unique element of maximal length in $\langle J \rangle$, which we denote by $r_J$ (cf.\ \cite[Corollary $2.19$]{AB08}). The Coxeter system is called \emph{spherical} if $S$ is spherical. If $(W, S)$ is irreducible, then $(W, S)$ is said to be of \emph{affine type} if $W$ is infinite and virtually abelian; equivalently, its underlying Coxeter diagram is of type $\tilde{A}_n$, $\tilde{B}_n$, $\tilde{C}_n$, $\tilde{D}_n$, $\tilde{E}_6$, $\tilde{E}_7$, $\tilde{E}_8$, $\tilde{F}_4$ or $\tilde{G}_2$ (see \cite[Ch.\ VI, $\S 4.3$ Theorem $4$]{Bo02}).

\begin{convention}\label{Convention: (W,S) Coxeter system of finite rank}
	For the rest of this article we let $(W, S)$ be a Coxeter system of finite rank.
\end{convention}

\subsection{Buildings}

A \emph{building of type $(W, S)$} is a pair $\Delta = (\mc{C}, \delta)$ where $\mc{C}$ is a non-empty set and where $\delta: \mc{C} \times \mc{C} \to W$ is a \emph{distance function} satisfying the following axioms, where $x, y\in \mc{C}$ and $w = \delta(x, y)$:
\begin{enumerate}[label=(Bu\arabic*)]
	\item $w = 1_W$ if and only if $x=y$;
	
	\item if $z\in \mc{C}$ satisfies $s := \delta(y, z) \in S$, then $\delta(x, z) \in \{w, ws\}$, and if, furthermore, $\ell(ws) = \ell(w) +1$, then $\delta(x, z) = ws$;
	
	\item if $s\in S$, there exists $z\in \mc{C}$ such that $\delta(y, z) = s$ and $\delta(x, z) = ws$.
\end{enumerate}

The \emph{rank} of $\Delta$ is the rank of the underlying Coxeter system. The building $\Delta$ is called \emph{spherical/affine} if its type is spherical/affine. The elements of $\mc{C}$ are called \emph{chambers}. Given $s\in S$ and $x, y \in \mc{C}$, then $x$ is called \emph{$s$-adjacent} to $y$, if $\delta(x, y) = s$. The chambers $x, y$ are called \emph{adjacent}, if they are $s$-adjacent for some $s\in S$. A \emph{gallery} from $x$ to $y$ is a sequence of chambers $(x = x_0, \ldots, x_k = y)$ such that $x_{i-1}$ and $x_i$ are adjacent for all $1 \leq i \leq k$; the number $k$ is called the \emph{length} of the gallery. A gallery from $x$ to $y$ of length $k$ is called \emph{minimal} if there is no gallery from $x$ to $y$ of length $<k$. In this case we have $\ell(\delta(x, y)) = k$ (cf.\ \cite[Corollary $5.17(1)$]{AB08}).

Given a subset $J \subseteq S$ and $x\in \mc{C}$, the \emph{$J$-residue of $x$} is the set $R_J(x) := \{y \in \mc{C} \mid \delta(x, y) \in \langle J \rangle \}$. Each $J$-residue is a building of type $(\langle J \rangle, J)$ with the distance function induced by $\delta$ (cf.\ \cite[Corollary $5.30$]{AB08}). A \emph{residue} is a subset $R$ of $\mc{C}$ such that there exist $J \subseteq S$ and $x\in \mc{C}$ with $R = R_J(x)$. Since the subset $J$ is uniquely determined by $R$, the set $J$ is called the \emph{type} of $R$ and the \emph{rank} of $R$ is defined to be the cardinality of $J$. Given $x\in \mc{C}$ and a residue $R \subseteq \mc{C}$, then there exists a unique chamber $z\in R$ such that $\ell(\delta(x, y)) = \ell(\delta(x, z)) + \ell(\delta(z, y))$ holds for all $y\in R$. The chamber $z$ is called the \emph{projection of $x$ onto $R$} and is denoted by $\proj_R x$. Moreover, if $z = \proj_R x$ we have $\delta(x, y) = \delta(x, z) \delta(z, y)$ for all $y\in R$ (cf.\ \cite[Proposition $5.34$]{AB08}). A residue is called \emph{spherical} if its type is a spherical subset of $S$. Let $R$ be a spherical residue of type $J$ and let $x, y \in R$. Then $x, y$ are called \emph{opposite in $R$} if $\delta(x, y) = r_J$. A \emph{panel} is a residue of rank $1$. An \emph{$s$-panel} is a panel of type $\{s\}$ for $s\in S$. The building $\Delta$ is called \emph{thick}, if each panel of $\Delta$ contains at least three chambers; $\Delta$ is called \emph{locally finite} if each panel contains only finitely many chambers; $\Delta$ is called \emph{regular} if it is locally finite and if for each $s\in S$ all $s$-panels have the same cardinality. If $\Delta$ is regular and $P$ is an $s$-panel, we define $q_s := \vert P \vert -1$.

Let $\Delta = (\mc{C}, \delta)$ and $\Delta' = (\mc{C}', \delta')$ be two buildings of type $(W, S)$ and let $\sigma:S \to S$ be a bijection satisfying $m_{\sigma(s) \sigma(t)} = m_{st}$ for all $s, t \in S$ (this guarantees that $\sigma$ extends to an automorphism of $W$; cf.\ Exercise~\ref{Exercise: Extension of bijection to automorphism}). A \emph{$\sigma$-isomorphism} from $\mathcal{X} \subseteq \mc{C}$ to $\mathcal{X}' \subseteq \mc{C}'$ is a bijection $\phi: \mathcal{X} \to \mathcal{X}'$ such that for all $x, y \in \mathcal{X}$ and $s\in S$ we have $\delta(x, y) = s$ if and only if $\delta'(\phi(x), \phi(y)) = \sigma(s)$. In this case we call $\mathcal{X}$ and $\mathcal{X}'$ \emph{$\sigma$-isomorphic}. An \emph{isomorphism} between $\mathcal{X}$ and $\mathcal{X}'$ is a $\sigma$-isomorphism for some bijection $\sigma:S \to S$ satisfying $m_{\sigma(s) \sigma(t)} = m_{st}$ for all $s, t \in S$. An \emph{automorphism} of the building $\Delta$ is an isomorphism from $\mc{C}$ to itself. A $\sigma$-isomorphism is called \emph{special} if $\sigma = \id$. We call a set of automorphisms \emph{special} if each automorphism in this set is special.

For a building $\Delta$ we denote the set of all automorphisms of $\Delta$ by $\Aut(\Delta)$ and the set of all special automorphisms by $\Aut_0(\Delta)$. A subgroup $G \leq \Aut_0(\Delta)$ is called \emph{Weyl-transitive} if for all $w\in W$ the group $G$ acts transitive on ordered pairs $(c, d)$ of chambers with $\delta(c, d) = w$. We call a subgroup $G \leq \Aut(\Delta)$ \emph{Weyl-transitive} if $G \cap \Aut_0(\Delta)$ is Weyl-transitive.

\begin{example}\label{Example: building}
	We define $\delta:W \times W \to W, (x, y) \mapsto x^{-1}y$. Then $\Sigma(W, S) := (W, \delta)$ is a building of type $(W, S)$. The group $W$ acts faithfully on $\Sigma(W, S)$ by special automorphisms by multiplication from the left, i.e.\ $W \leq \Aut_0(\Sigma(W, S))$.
\end{example}

\begin{lemma}[Exercise~\ref{Exercise: proj isom commute}]\label{Lemma: projection and isometry commute}
	Let $\Delta = (\mc{C}, \delta)$ be a building of type $(W, S)$ and let $\phi \in \Aut_0(\Delta)$. Then $\phi(\proj_R x) = \proj_{\phi(R)} \phi(x)$ holds for all $x\in \mc{C}$ and each residue $R \subseteq \mc{C}$.
\end{lemma}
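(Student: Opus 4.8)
The plan is to use the characterization of the projection as the \emph{unique} chamber of $R$ realizing a certain additive length identity, together with the fact that a special automorphism preserves the $W$-valued distance function $\delta$ exactly.

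First I would record the auxiliary fact that for $\phi \in \Aut_0(\Delta)$ one has $\delta(\phi(x), \phi(y)) = \delta(x, y)$ for all $x, y \in \mc{C}$, not merely $\delta(\phi(x),\phi(y)) \in S \iff \delta(x,y) \in S$. By definition $\phi$ is a $\sigma$-isomorphism with $\sigma = \id$, so the claim holds whenever $\delta(x,y) \in S$. For the general case, choose a minimal gallery $(x = x_0, \dots, x_k = y)$; since $\phi$ preserves $s$-adjacency for every $s \in S$, the image $(\phi(x_0), \dots, \phi(x_k))$ is a gallery of the same type from $\phi(x)$ to $\phi(y)$, hence $\ell(\delta(\phi(x),\phi(y))) \le k = \ell(\delta(x,y))$; applying the same estimate to $\phi\inv$ (which is again special) gives equality, and then the explicit description of the Weyl distance along a minimal gallery forces $\delta(\phi(x),\phi(y)) = \delta(x,y)$. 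In particular $\ell \circ \delta$ is $\phi$-invariant.

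Next I would note that $\phi(R)$ is again a residue of the same type as $R$: writing $R = R_J(c)$, the identity $\delta(\phi(c), \phi(y)) = \delta(c, y)$ from the first step shows $\phi(R_J(c)) = R_J(\phi(c))$, so $\phi(R)$ is a $J$-residue and $\proj_{\phi(R)} \phi(x)$ is defined. Now set $z := \proj_R x$. By the defining property of the projection, $\ell(\delta(x, y)) = \ell(\delta(x, z)) + \ell(\delta(z, y))$ for every $y \in R$. Applying $\phi$ and using the $\phi$-invariance of $\ell \circ \delta$, we get for every $y \in R$
\[ \ell(\delta(\phi(x), \phi(y))) = \ell(\delta(\phi(x), \phi(z))) + \ell(\delta(\phi(z), \phi(y))). \]
As $y$ ranges over $R$ the chamber $\phi(y)$ ranges over all of $\phi(R)$, and $\phi(z) \in \phi(R)$; hence $\phi(z)$ satisfies exactly the property that characterizes $\proj_{\phi(R)} \phi(x)$ uniquely, so $\phi(\proj_R x) = \phi(z) = \proj_{\phi(R)} \phi(x)$.

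The only real content lies in the first step, i.e.\ that a special automorphism preserves $\delta$ and not merely adjacency; this is routine given the building axioms (and is the sort of fact one may also simply cite). Once it is available, the lemma follows immediately from the uniqueness clause in the definition of $\proj$, so I anticipate no genuine obstacle.
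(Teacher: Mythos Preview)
Your argument is correct. The paper does not actually supply a proof of this lemma: it is stated with a pointer to Exercise~\ref{Exercise: proj isom commute}, where the reader is simply asked to prove it (and to think about what fails when $\phi$ is not special). So there is no ``paper's own proof'' to compare against beyond noting that your approach is exactly the intended one.

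Two small remarks. First, the auxiliary fact you establish in your first step---that a special automorphism preserves $\delta$ and not merely adjacency---is precisely Exercise~\ref{Exercise: Isometry} in the paper, so in the context of this document you could also just cite it. Second, your observation that $\phi(R_J(c)) = R_J(\phi(c))$ is the point where specialness is genuinely used: for a general $\sigma$-isomorphism one would only get $\phi(R_J(c)) = R_{\sigma(J)}(\phi(c))$, which still yields the projection identity (projections make sense regardless of the type of the residue), so in fact the lemma holds for all automorphisms, not only special ones. This is presumably what the second half of Exercise~\ref{Exercise: proj isom commute} is probing.
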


\subsection{Apartments}

Let $\Delta = (\mc{C}, \delta)$ be a building of type $(W, S)$. A subset $\Sigma \subseteq \mc{C}$ is called \emph{convex} if for any two chambers $c, d \in \Sigma$ and any minimal gallery $(c_0 = c, \ldots, c_k = d)$, we have $c_i \in \Sigma$ for all $0 \leq i \leq k$. We note that residues are convex (cf.\ \cite[Example~5.44(b)]{AB08}). For $X \subseteq \mc{C}$ we define the \emph{convex hull} $\conv(X)$ of $X$ as the intersection of all convex subsets containing $X$. A subset $\Sigma \subseteq \mc{C}$ is called \emph{thin} if $P \cap \Sigma$ contains exactly two chambers for each panel $P \subseteq \mc{C}$ which meets $\Sigma$. An \emph{apartment} is a non-empty subset $\Sigma \subseteq \mc{C}$, which is convex and thin. It is a basic fact that for an apartment $\Sigma$ and $c\in\Sigma$ the map $\sigma_c: \Sigma \to W, x \mapsto \delta(c, x)$ is a special isomorphism (cf.\ \cite[Proposition $5.65$]{AB08}). Moreover, for any two chambers $c, d \in \mc{C}$ there exists an apartment $\Sigma$ with $c, d \in \Sigma$ (cf.\ \cite[Corollary~5.74]{AB08}).

\begin{lemma}\label{Lemma:gfixesXimpliesgfixesconv(X)}
	Let $\Delta = (\mc{C}, \delta)$ be a building of type $(W, S)$, let $\Sigma$ be an apartment of $\Delta$, let $\phi \in \Aut_0(\Delta)$, and let $X \subseteq \Sigma$ which is fixed pointwise by $\phi$.
	\begin{enumerate}[label=(\alph*)]
		\item Let $\sigma \subseteq \Sigma$ with $\phi(\sigma) = \sigma$. If $X \neq \emptyset$, then $\phi$ fixes $\sigma$ pointwise.
		
		\item $\phi$ fixes $\conv(X)$ pointwise.
	\end{enumerate}
\end{lemma}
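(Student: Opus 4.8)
The plan is to prove (a) directly and then derive (b) from it by taking $\sigma := \conv(X)$, so the real content lies in (a). For (a), I would fix a chamber $c \in X$, which exists because $X \neq \emptyset$, and recall the basic fact that the map $\sigma_c \colon \Sigma \to W$, $x \mapsto \delta(c, x)$ is a special isomorphism; in particular it is injective. Now let $x \in \sigma$ be arbitrary. The hypothesis $\phi(\sigma) = \sigma$ gives $\phi(x) \in \sigma \subseteq \Sigma$, while $\phi(c) = c$ together with the fact that $\phi$ is special yields $\delta(c, \phi(x)) = \delta(\phi(c), \phi(x)) = \delta(c, x)$. Since $x$ and $\phi(x)$ both lie in $\Sigma$ and $\sigma_c$ is injective, this forces $\phi(x) = x$. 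As $x \in \sigma$ was arbitrary, $\phi$ fixes $\sigma$ pointwise.

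For (b) I would first record two elementary observations. Because $\Sigma$ is convex and contains $X$, it is one of the convex sets appearing in the intersection defining $\conv(X)$, so $\conv(X) \subseteq \Sigma$. Second, $\phi$ is an automorphism, hence sends minimal galleries to minimal galleries and therefore maps convex sets to convex sets and, being bijective, commutes with arbitrary intersections of subsets; combined with $\phi(X) = X$ (which holds since $\phi$ fixes $X$ pointwise) this gives $\phi(\conv(X)) = \conv(\phi(X)) = \conv(X)$. If $X \neq \emptyset$, I then apply part (a) with $\sigma := \conv(X)$ to conclude that $\phi$ fixes $\conv(X)$ pointwise. If $X = \emptyset$, then $\conv(X) = \emptyset$ and there is nothing to prove.

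There is no genuine obstacle in this argument; the only points that require a small amount of care are the bookkeeping needed to make (a) applicable in (b) — namely that $\conv(X)$ is simultaneously contained in $\Sigma$ and invariant under $\phi$ — and the degenerate case $X = \emptyset$, which must be separated out since part (a) assumes $X$ is non-empty.
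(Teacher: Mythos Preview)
Your proposal is correct and follows essentially the same approach as the paper: part~(a) is proved identically via the injectivity of the special isomorphism $\sigma_c\colon \Sigma\to W$ applied to a fixed chamber of $X$, and part~(b) is reduced to (a) by checking that $\conv(X)\subseteq\Sigma$ and $\phi(\conv(X))=\conv(X)$, with the empty case handled separately. The only cosmetic difference is that the paper verifies $\phi(\conv(X))=\conv(X)$ via the two inclusions $\conv(X)\subseteq\phi^{\pm1}(\conv(X))$, whereas you phrase it as $\phi(\conv(X))=\conv(\phi(X))$; both rest on the same fact that automorphisms preserve convexity.
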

\begin{proof}
	(a) Let $\sigma \subseteq \Sigma$ with $\phi(\sigma) = \sigma$, and let $x\in X$. By Exercise~\ref{Exercise: Isometry} the automorphism $\phi$ preserves the distance function. Thus, for $y\in \sigma$ we have $\phi(y) \in \sigma$ and $\delta(x, y) = \delta(\phi(x), \phi(y)) = \delta(x, \phi(y))$. As $x, y, \phi(y) \in \Sigma$, the special isomorphism $\sigma_x$ yields $y = \phi(y)$ and, hence, $\phi$ fixes $\sigma$.
	
	(b) If $X=\emptyset$, then $\conv(X) = \emptyset$ and the claim follows. Thus we can assume $X \neq \emptyset$. Let $C \subseteq \mc{C}$ be any convex set. By \cite[Lemma $5.62$]{AB08} the set $\phi^{\pm 1}(C)$ is convex. We deduce that $\phi^{\pm 1}(\conv(X))$ is convex and contains $X$. Thus $\conv(X) \subseteq \phi^{\pm 1}(\conv(X))$. We conclude $\phi(\conv(X)) \subseteq \conv(X) \subseteq \phi(\conv(X))$ and, hence, $\phi$ stabilizes $\conv(X)$. As $\conv(X) \subseteq \Sigma$ (cf.\ Exercise~\ref{Exercise: Convex sets}), the claim now follows from (a).
\end{proof}

\subsection{Roots}

In this subsection we consider the Coxeter building $\Sigma(W, S)$ from Example~\ref{Example: building}. A \emph{reflection} is an element of $W$ that is conjugate to an element of $S$. For $s\in S$ we let $\alpha_s := \{ w\in W \mid \ell(sw) > \ell(w) \}$ be the \emph{simple root} of $(W, S)$ corresponding to $s$. A \emph{root} of $(W, S)$ is a subset $\alpha \subseteq W$ such that $\alpha = v\alpha_s$ for some $v\in W$ and $s\in S$. We denote the set of all roots of $(W, S)$ by $\Phi := \Phi(W, S)$. Roots can be seen as half-spaces in the Cayley graph of $(W, S)$. For each root $\alpha \in \Phi$ we denote the root \emph{opposite} $\alpha$ by $-\alpha := W \backslash \alpha \in \Phi$ and we denote the unique reflection which interchanges these two roots by $r_{\alpha} \in W \leq \Aut_0(\Sigma(W, S))$. We note that roots are convex (cf.\ \cite[Proposition~5.81(1)]{AB08}). A root $\alpha \in \Phi$ \emph{cuts} a residue $R$ if $\alpha \cap R \neq \emptyset \neq (-\alpha) \cap R$. For a residue $R$ we let $\Phi_R$ be the set of all roots cutting $R$. Two roots $\alpha, \beta \in \Phi$ (not necessarily distinct) are \emph{parallel} if $\alpha \subseteq \beta$ or $\beta \subseteq \alpha$. An element $t\in \Aut(\Sigma(W, S))$ is called \emph{translation} if it maps each root to a parallel root. Note that a translation need not be a special automorphism.

For $\alpha \in \Phi$ we denote by $\partial \alpha$ the set of all panels stabilized by $r_{\alpha}$. The set $\partial \alpha$ is called the \emph{wall} associated with $\alpha$. Let $G = (c_0, \ldots, c_k)$ be a gallery. We say that $G$ \emph{crosses the wall $\partial \alpha$} if there exists $1 \leq i \leq k$ such that $\{ c_{i-1}, c_i \} \in \partial \alpha$.

\begin{lemma}\label{Lemma: minimal gallery crossing roots}
	A gallery is minimal if and only if it crosses each wall at most once.
\end{lemma}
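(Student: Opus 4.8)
The plan is to prove both implications by relating the length of a gallery $G = (c_0, \dots, c_k)$ to the number of walls it crosses. The key observation is that a wall $\partial\alpha$ is crossed by the single adjacency step $(c_{i-1}, c_i)$ precisely when $c_{i-1}$ and $c_i$ lie on opposite sides of $\alpha$, i.e.\ exactly one of $c_{i-1}, c_i$ is in $\alpha$. I would first reduce to the case of the Coxeter building $\Sigma(W,S)$ by passing through an apartment: given a building $\Delta$ and a gallery $G$ in it, one chooses an apartment $\Sigma$ containing the endpoints $c_0, c_k$ and — using minimality and convexity — containing all of $G$, then transports $G$ via the special isomorphism $\sigma_{c_0}\colon \Sigma \to W$ to a gallery in $\Sigma(W,S)$; this is an isometry, so it preserves gallery length, and it carries walls to walls (a panel is stabilized by $r_\alpha$ iff its image is), so crossing patterns are preserved. (Alternatively one works directly with the reflections: step $i$ crosses the wall of the reflection $r_i := \delta(c_0,c_{i-1})\, s_i\, \delta(c_0,c_{i-1})\inv$ where $s_i = \delta(c_{i-1},c_i)$, and $G$ is minimal iff the word $s_1\cdots s_k$ is reduced.)

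For the forward direction, suppose $G$ is minimal but crosses some wall $\partial\alpha$ twice, at steps $i < j$. Then $c_{i-1}$ and $c_j$ lie on the same side of $\alpha$, say both in $\alpha$, while $c_i, \dots, c_{j-1}$ lie in $-\alpha$. Reflecting the sub-gallery $(c_i, \dots, c_{j-1})$ by $r_\alpha$ and splicing — a standard exchange argument — produces a gallery from $c_0$ to $c_k$ of length $k - 2$ (the two crossings at $i$ and $j$ disappear, and the reflected chambers still connect up because $r_\alpha$ is a special automorphism preserving adjacency and fixing $c_{i-1}=\proj$, $c_j$ appropriately), contradicting minimality. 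I should check carefully that after reflection $c_{i-1}$ is adjacent to $r_\alpha(c_{i+1})$ and that the endpoints match; this is where one uses that $\{c_{i-1},c_i\}\in\partial\alpha$ means $r_\alpha$ swaps $c_{i-1}$ and $c_i$.

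For the converse, suppose $G$ crosses every wall at most once; I claim $k = \ell(\delta(c_0,c_k))$, so $G$ is minimal. Since $G$ is some gallery from $c_0$ to $c_k$ we have $k \geq \ell(\delta(c_0,c_k))$. For the reverse inequality, note that any gallery from $c_0$ to $c_k$ must cross at least every wall that \emph{separates} $c_0$ from $c_k$ (a wall $\partial\alpha$ with $c_0 \in \alpha$, $c_k \in -\alpha$), since each such crossing flips the side and the number of flips along the gallery must be odd hence nonzero; conversely a gallery crossing each wall at most once can only cross separating walls (a non-separating wall crossed once would leave $c_0, c_k$ on opposite sides). Hence $k$ equals the number of separating walls, which is a lower bound for the length of \emph{every} gallery, in particular a minimal one; so $k \leq \ell(\delta(c_0,c_k))$.

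The main obstacle I anticipate is the bookkeeping in the exchange/reflection argument of the forward direction — verifying that the spliced gallery is genuinely a gallery (consecutive chambers adjacent) and genuinely shorter, being careful with the boundary steps $i-1 \to i$ and $j-1 \to j$ where the reflection $r_\alpha$ is "used up." Everything else is a matter of translating between the combinatorics of gallery crossings and the parity/counting of separating walls, which is routine once the reduction to $\Sigma(W,S)$ (or to reduced words in $W$) is in place.
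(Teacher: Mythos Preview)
Your argument is correct, and the converse direction matches the paper's approach almost exactly: both compare $G$ to a minimal gallery via the count of separating walls, with your parity argument making explicit what the paper leaves to \cite[Lemma~3.69]{AB08}. For the forward direction the paper simply cites that same lemma, whereas you supply the underlying reflection/exchange argument, which makes your version more self-contained. Two minor remarks: your assertion that the intermediate chambers $c_i,\dots,c_{j-1}$ all lie in $-\alpha$ holds only if $i$ and $j$ are \emph{consecutive} crossings of $\partial\alpha$, but the splicing works regardless of where those chambers sit, so the claim is superfluous (and best dropped); and the lemma in the paper is stated only for the Coxeter building $\Sigma(W,S)$, so your reduction from a general building via an apartment is unnecessary --- which is fortunate, since in the converse direction the gallery is not yet known to be minimal and hence need not lie in any apartment, making that reduction circular.
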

\begin{proof}
	One implication is \cite[Lemma $3.69$]{AB08}. Thus let $G = (c_0, \ldots, c_n)$ be a gallery which crosses each wall at most once. Let $H = (d_0 = c_0, \ldots, d_k = c_n)$ be a minimal gallery. Then by \cite[Lemma $3.69$]{AB08} the $k$ walls crossed by $H$ are distinct and are precisely the walls which \emph{separate} $c_0$ and $c_n$, i.e.\ for each root $\alpha$ with $\vert \alpha \cap \{c_0, c_n\} \vert = 1$ the wall $\partial \alpha$ is crossed by $H$. Thus there are only $k$ such walls. Since the walls crossed by the gallery $G$ are distinct, we infer $n=k$ and the gallery $G$ is minimal.
\end{proof}

\begin{lemma}\label{Lemma:translationinducesminimalgallery}
	Let $t \in \Aut(\Sigma(W, S))$ be a translation and let $(c_0, \ldots, c_k = t(c_0))$ be a minimal gallery. Then, for all $n \in \Nb$, the concatenation of the galleries $(t^i(c_0), \ldots, t^i(c_k))$, $0 \leq i \leq n$, is a minimal gallery.
\end{lemma}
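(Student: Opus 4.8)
The plan is to apply Lemma~\ref{Lemma: minimal gallery crossing roots}: a gallery is minimal precisely when it crosses each wall at most once, so it suffices to check that the concatenated gallery $\Gamma$ crosses every wall at most once. I would first record the bookkeeping. Write $\gamma = (c_0, \ldots, c_k)$. Within each block $(t^i(c_0), \ldots, t^i(c_k))$ consecutive chambers are adjacent because $t^i \in \Aut(\Sigma(W,S))$, and the blocks concatenate into a genuine gallery because $t^i(c_k) = t^{i+1}(c_0)$. Moreover $t^{-i}$ carries the panel $\{t^i(c_{j-1}), t^i(c_j)\}$ to $\{c_{j-1}, c_j\}$ and carries $r_\alpha$ to $r_{t^{-i}(\alpha)}$, so the $i$-th block crosses a wall $\partial\alpha$ at step $j$ if and only if $\gamma$ crosses $\partial(t^{-i}(\alpha))$ at step $j$. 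Hence the total number of times $\Gamma$ crosses $\partial\alpha$ equals $\sum_{i=0}^{n} m_i$, where $m_i$ is the number of steps at which $\gamma$ crosses $\partial(t^{-i}(\alpha))$.

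Next I would bound each summand. Since $t^i$ is an automorphism it preserves $\ell \circ \delta$, so each block, being the image of the minimal gallery $\gamma$, is itself minimal; by Lemma~\ref{Lemma: minimal gallery crossing roots} this gives $m_i \le 1$ for every $i$. Furthermore, a minimal gallery crosses a given wall only if its two endpoints lie on opposite sides of it, so $m_i = 1$ forces exactly one of $c_0,\ t(c_0) = c_k$ to lie in $t^{-i}(\alpha)$. Introduce $g(i) \in \{0,1\}$, equal to $1$ iff $c_0 \in t^{-i}(\alpha)$; since $t(c_0) \in t^{-i}(\alpha) \iff c_0 \in t^{-(i+1)}(\alpha)$, the condition ``$m_i = 1$'' implies $g(i) \neq g(i+1)$. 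So it only remains to prove that $g$ changes value at most once on $\{0, 1, \ldots, n+1\}$.

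This last point is the heart of the matter and the only place the translation hypothesis is used. Because $t$ is a translation, $t(\alpha)$ is parallel to $\alpha$, i.e.\ one of $t(\alpha), \alpha$ contains the other; applying the automorphisms $t^j$ (which preserve inclusions of roots) shows that the bi-infinite sequence $(t^j(\alpha))_{j \in \Zb}$ cannot oscillate --- it is constant, strictly increasing, or strictly decreasing with respect to inclusion. Consequently $(t^{-i}(\alpha))_{0 \le i \le n+1}$ is a chain for inclusion, so $g(i) = [\,c_0 \in t^{-i}(\alpha)\,]$ is a monotone $\{0,1\}$-valued function and changes value at most once. Therefore at most one $m_i$ is nonzero and each is at most $1$, so $\Gamma$ crosses $\partial\alpha$ at most once. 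As $\partial\alpha$ was arbitrary, Lemma~\ref{Lemma: minimal gallery crossing roots} shows $\Gamma$ is minimal, which is the claim.

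The steps I expect to be routine are that automorphisms of $\Sigma(W,S)$ send minimal galleries to minimal galleries and walls to walls (via $t r_\alpha t^{-1} = r_{t(\alpha)}$), together with the identification ``$\Gamma$ crosses $\partial\alpha$ in block $i$'' $\leftrightarrow$ ``$\gamma$ crosses $\partial(t^{-i}(\alpha))$''. The one step requiring a little care is the monotonicity of $(t^j(\alpha))_{j\in\Zb}$: one must note that $t(\alpha) \subsetneq \alpha$ propagates to $t^{j+1}(\alpha) \subsetneq t^j(\alpha)$ for all $j$ (and symmetrically for $\supsetneq$), which is exactly why iterating a single translation produces a nested --- rather than merely pairwise-parallel --- family of roots.
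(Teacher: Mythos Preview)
Your argument is correct. Both your proof and the paper's hinge on the same two ingredients---Lemma~\ref{Lemma: minimal gallery crossing roots} and the fact that a translation sends every root to a parallel one---but the packaging differs. The paper argues by contradiction: assuming some wall $\partial\alpha$ is crossed in two distinct blocks $m<l$, it compares the positions of the crossings of $\partial\alpha$ and $\partial t^{l-m}(\alpha)$ within the single minimal block $l$ and reaches a contradiction with parallelism via a short case analysis on indices. You instead argue directly: translating the question back to the fixed gallery $\gamma$, you observe that the roots $t^{-i}(\alpha)$ form a nested chain (since $t(\alpha)\subseteq\alpha$ or $\alpha\subseteq t(\alpha)$, and applying the automorphism $t^j$ propagates the inclusion), so the indicator $g(i)=[\,c_0\in t^{-i}(\alpha)\,]$ is monotone and changes value at most once; hence at most one block can cross $\partial\alpha$. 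Your formulation is a bit more conceptual and avoids the case split, while the paper's version works entirely inside one block and never needs to name the function $g$; both are short and either would serve.
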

\begin{proof}
	Assume that the gallery is not minimal. By Lemma~\ref{Lemma: minimal gallery crossing roots} the previous lemma there exists a root $\alpha \in \Phi$ such that the wall $\partial \alpha$ is crossed twice by this gallery. Let $m$ be minimal such that $(t^m(c_0), \ldots, t^m(c_k))$ crosses the wall $\partial \alpha$. Then there exists a unique $1 \leq i \leq k$ with $\{t^m(c_{i-1}), t^m(c_i)\} \in \partial \alpha$. Without loss of generality we can assume that $t^m(c_{i-1}) \in \alpha$. Let $l>m$ be minimal such that $(t^l(c_0), \ldots, t^l(c_k))$ crosses the wall $\partial \alpha$. Then there exists a unique $1 \leq j \leq k$ with $\{ t^l(c_{j-1}), t^l(c_j) \} \in \partial \alpha$. Since $\partial \alpha$ is not crossed by $(t^m(c_i), \ldots, t^l(c_{j-1}))$, we infer $t^l(c_j) \in \alpha$.
	
	Note that $\{ t^l(c_{i-1}), t^l(c_i) \} \in \partial t^{l-m}(\alpha)$ and $t^l(c_{i-1}) \in t^{l-m}(\alpha)$. As $t$ is a translation, the roots $\alpha$ and $t^{l-m}(\alpha)$ are parallel and, in particular, $t^{l-m}(\alpha) \neq (-\alpha)$. This implies $i \neq j$. Suppose $i<j$ (resp.\ $i>j$). Since the gallery $(t^l(c_0), \ldots, t^l(c_k))$ is minimal, the walls $\partial \alpha$ and $\partial t^{l-m}(\alpha)$ are crossed exactly once by this gallery by Lemma~\ref{Lemma: minimal gallery crossing roots}. Thus $t^l(c_j) \in \alpha \backslash t^{l-m}(\alpha)$ and $t^l(c_{i-1}) \in t^{l-m}(\alpha) \backslash \alpha$ (resp.\ $t^l(c_{j-1}) \in t^{l-m}(\alpha) \backslash \alpha$ and $t^l(c_i) \in \alpha \backslash t^{l-m}(\alpha)$). Since both roots are parallel, we infer $\alpha = t^{l-m}(\alpha)$, which is a contradiction to the fact $\alpha \backslash t^{l-m}(\alpha) \neq \emptyset$.
\end{proof}

\subsection{Moufang buildings}

Let $\Delta = (\mc{C}, \delta)$ be a building of type $(W, S)$. A subset $\alpha \subseteq \mc{C}$ is called a \emph{root} if it is $\id$-isomorphic to a simple root $\alpha_s \subseteq W$ for some $s\in S$. By \cite[Proposition $5.81(3)$]{AB08} the roots in the building $\Sigma(W, S)$ are precisely of the form $v\alpha_s$ for some $v\in W$ and $s\in S$. Thus we denote the set of all roots in $\Sigma(W, S)$ also by $\Phi$.

\begin{lemma}[{\cite[Proposition~29.20]{We09}}]\label{Lemma:Prop29.20We09}
	Let $X\subseteq W$. Then $\conv(X)$ coincides with the intersection of all roots of $\Sigma(W, S)$ that contain $X$.
\end{lemma}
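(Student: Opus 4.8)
The plan is to establish, and then specialize, the cleaner statement that \emph{every} convex set of chambers $M$ of $\Sigma(W, S)$ coincides with the intersection of the roots containing it. One inclusion is immediate: since roots are convex, every root containing $X$ contains $\conv(X)$, so $\conv(X) \subseteq \bigcap\{\alpha \in \Phi \mid X \subseteq \alpha\}$; for the same reason the families $\{\alpha \in \Phi \mid X \subseteq \alpha\}$ and $\{\alpha \in \Phi \mid \conv(X) \subseteq \alpha\}$ coincide. Hence it suffices to show: if $M$ is a convex set of chambers and $c \notin M$, then there is a root $\alpha \in \Phi$ with $M \subseteq \alpha$ and $c \notin \alpha$. (If $X = \emptyset$ both sides are empty, since $\alpha \cap (-\alpha) = \emptyset$, so we may assume $M = \conv(X) \neq \emptyset$.)

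To build such an $\alpha$, I would first pick $c_0 \in M$ with $\ell(\delta(c_0, c))$ minimal; as $c \notin M$ this minimum is some $n \geq 1$. Fix a minimal gallery $(c_0, c_1, \ldots, c_n = c)$ and let $\alpha$ be the unique root with $c_0 \in \alpha$ and $c_1 \notin \alpha$; then $\{c_0, c_1\} \in \partial \alpha$, so the gallery crosses $\partial\alpha$ between $c_0$ and $c_1$. By Lemma~\ref{Lemma: minimal gallery crossing roots} a minimal gallery crosses $\partial\alpha$ at most once, hence exactly once here, so all of $c_1, \ldots, c_n$ lie on the $(-\alpha)$-side of $\partial\alpha$. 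In particular $c = c_n \notin \alpha$, as desired.

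It remains to show $M \subseteq \alpha$, and this is the one genuinely delicate step. Suppose there were $y \in M$ with $y \notin \alpha$. The point is that $c_0$ and $c_1$ lie in a common panel, so the only wall separating them is $\partial\alpha$; consequently the only root that contains $c_0$ but not $c_1$ is $\alpha$ itself. Therefore every root containing both $c_0$ and $y$ also contains $c_1$ (such a root cannot be $\alpha$, since it contains $y$), which means $c_1 \in \conv(\{c_0, y\}) \subseteq M$ by convexity of $M$. But then the gallery $(c_1, \ldots, c_n)$ exhibits $\ell(\delta(c_1, c)) \leq n - 1 < n$, contradicting the minimality of $c_0$. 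Hence $M \subseteq \alpha$. Applying this to $M = \conv(X)$ and combining with the first paragraph yields the claim. The main obstacle is precisely this last contradiction: it requires simultaneously the fact that two adjacent chambers are separated by a single wall (forcing $c_1$ into $\conv(\{c_0, y\})$) and the choice of $c_0$ as a closest chamber of $M$ to $c$; everything else reduces to convexity of roots together with Lemma~\ref{Lemma: minimal gallery crossing roots}.
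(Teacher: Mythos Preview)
The paper does not prove this lemma; it simply cites \cite[Proposition~29.20]{We09}. So there is no paper proof to compare against, and I assess your argument on its own.

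Your overall strategy is the standard one and is sound: reduce to showing that any chamber $c$ outside a nonempty convex set $M$ can be separated from $M$ by a root, and build that root from the first step of a minimal gallery from a closest point $c_0 \in M$ to $c$. The verification that $c \notin \alpha$ via Lemma~\ref{Lemma: minimal gallery crossing roots} is fine.

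There is, however, a genuine gap in the last step. You conclude $c_1 \in \conv(\{c_0, y\})$ from the observation that every root containing $c_0$ and $y$ also contains $c_1$. That implication is precisely the statement you are trying to prove, specialized to the two-element set $\{c_0, y\}$: you are using $\bigcap\{\beta \in \Phi \mid c_0, y \in \beta\} \subseteq \conv(\{c_0, y\})$, which is the nontrivial inclusion of the lemma. As written the argument is circular.

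The fix is short and does not go through roots at all. Since $c_0 \in \alpha$ and $y \in -\alpha$, the wall $\partial\alpha$ separates $c_0$ from $y$, and $c_1$ is the chamber adjacent to $c_0$ across that wall. Hence $\ell(\delta(c_1, y)) = \ell(\delta(c_0, y)) - 1$: take a minimal gallery from $c_0$ to $y$, let $\{d_{i-1}, d_i\}$ be the unique panel on it in $\partial\alpha$, and replace the initial segment $(c_0, \dots, d_{i-1})$ by its image under $r_\alpha$, which ends at $d_i$; this yields a gallery from $c_1$ to $y$ of length one less. Together with the triangle inequality this shows $c_1$ lies on a minimal gallery from $c_0$ to $y$, hence $c_1 \in M$ by convexity of $M$, and your contradiction goes through.
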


Let $(W, S)$ be irreducible, spherical of rank at least $2$ and let $\Delta = (\mc{C}, \delta)$ be a thick building of type $(W, S)$. For a root $\alpha \subseteq \mc{C}$ we define the \emph{root group} $U_{\alpha}$ as the set of all automorphisms of $\Delta$ fixing $\alpha$ pointwise and fixing every panel $P$ pointwise, where $\vert P \cap \alpha \vert = 2$. The building $\Delta$ is called a \emph{Moufang} if for every root $\alpha$ of $\Delta$ the root group $U_{\alpha}$ acts simply transitive on the set of apartments containing $\alpha$.

\begin{lemma}\label{Lemma:fixatorofaroot}
	Let $\Delta$ be a spherical Moufang building, let $\Sigma$ be an apartment of $\Delta$, let $\gamma \subseteq \Sigma$ be a root and let $H := \Fix(\Sigma) \subseteq \Aut_0(\Delta)$. Then we have $\Fix(\gamma) = U_{\gamma} H$.
\end{lemma}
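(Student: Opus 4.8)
The plan is to prove the two inclusions $\Fix(\gamma) \supseteq U_\gamma H$ and $\Fix(\gamma) \subseteq U_\gamma H$ separately. The first inclusion is the easy direction: if $h \in H = \Fix(\Sigma)$, then in particular $h$ fixes $\gamma$ pointwise, so $H \subseteq \Fix(\gamma)$; and if $u \in U_\gamma$, then by the very definition of the root group $u$ fixes $\gamma$ pointwise, so $U_\gamma \subseteq \Fix(\gamma)$. Since $\Fix(\gamma)$ is a group, the product $U_\gamma H$ lies inside it. I would dispatch this in one or two sentences.

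For the reverse inclusion, take $\phi \in \Fix(\gamma)$; the goal is to write $\phi = u h$ with $u \in U_\gamma$ and $h \in H$. The idea is to use the Moufang property to correct $\phi$ back into $\Fix(\Sigma)$. Since $\gamma \subseteq \Sigma$ is a root and $\phi$ fixes $\gamma$ pointwise, the image $\phi(\Sigma)$ is an apartment of $\Delta$ containing $\gamma$ (apartments are carried to apartments by automorphisms, and $\phi$ fixes $\gamma$). By the Moufang hypothesis the root group $U_\gamma$ acts simply transitively on the set of apartments containing $\gamma$, so there is a unique $u \in U_\gamma$ with $u(\Sigma) = \phi(\Sigma)$; consequently $h := u^{-1}\phi$ satisfies $h(\Sigma) = \Sigma$. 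It remains to check that $h$ actually fixes $\Sigma$ pointwise, not merely setwise. For this I would invoke Lemma~\ref{Lemma:gfixesXimpliesgfixesconv(X)}(a): $h$ stabilizes the apartment $\Sigma$, and $h$ fixes the nonempty set $\gamma$ pointwise (because $\phi$ fixes $\gamma$ pointwise and $u \in U_\gamma$ does too, hence $u^{-1}$ does), so $h$ fixes all of $\Sigma$ pointwise, i.e. $h \in \Fix(\Sigma) = H$. Then $\phi = u h \in U_\gamma H$.

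One point to be careful about is that $h = u^{-1}\phi$ must be a \emph{special} automorphism to belong to $H \subseteq \Aut_0(\Delta)$; this needs $u \in U_\gamma$ and $\phi \in \Fix(\gamma)$ to be special. The root groups $U_\gamma$ of a Moufang building are special by construction (a $\sigma$-isomorphism fixing a root pointwise must have $\sigma = \id$, since the root spans enough of the building to pin down the labelling), and $\Fix(\gamma)$ here is taken inside $\Aut_0(\Delta)$ as indicated by the ambient conventions, so $\phi$ is special as well; hence $h$ is special. I do not expect any serious obstacle: the only real content is recognizing that the Moufang transitivity on apartments through $\gamma$ is exactly what lets one straighten $\phi$, and that Lemma~\ref{Lemma:gfixesXimpliesgfixesconv(X)}(a) upgrades "stabilizes $\Sigma$ and fixes a nonempty subset" to "fixes $\Sigma$ pointwise." The mild subtlety is simply keeping track of which automorphisms are special, which the definitions already handle for us.
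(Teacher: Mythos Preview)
Your proposal is correct and follows essentially the same route as the paper: show the easy inclusion, then for $\phi\in\Fix(\gamma)$ use Moufang transitivity to find $u\in U_\gamma$ with $u(\Sigma)=\phi(\Sigma)$ and conclude $u^{-1}\phi\in H$. Two minor remarks: the paper does not take $\Fix(\gamma)\subseteq\Aut_0(\Delta)$ as a convention but rather deduces it from \cite[Example~3.128]{AB08} (your parenthetical rigidity argument is the right idea); and you are actually more explicit than the paper in invoking Lemma~\ref{Lemma:gfixesXimpliesgfixesconv(X)}(a) to pass from $h(\Sigma)=\Sigma$ to $h\in\Fix(\Sigma)$, a step the paper leaves implicit.
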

\begin{proof}
	By definition, $U_{\gamma}$ and $H$ fix the root $\gamma$. Thus let $g\in \Fix(\gamma)$. We note that since $(W, S)$ is irreducible and of rank at least $2$, \cite[Example $3.128$]{AB08} implies that $g$ is special. Since $\Delta$ is a Moufang building, the group $U_{\gamma}$ acts transitive on the set of apartments containing $\gamma$. Note that $g(\Sigma)$ is an apartment containing $\gamma$. Hence there exists $u \in U_{\gamma}$ with $u(\Sigma) = g(\Sigma)$. In particular, $u^{-1}g \in \Fix(\Sigma) = H$.
\end{proof}

\section{Tdlc groups}\label{Section: Computing set of roots}

\subsection{Recalling definitions}

We first recall some key definitions and results from \cite{willis2025flatgroups} and \cite{Wi04} about \tdlc groups. Let $G$ be a \tdlc group and let $\alpha \in \Aut(G)$. We denote the set of all compact open subgroups of $G$ by $\COS(G)$. Then $U \in \COS(G)$ is called \emph{tidy for $\alpha$} if the following conditions are satisfied, where $U_{\alpha \epsilon} := \bigcap_{n\geq 0} \alpha^{\epsilon n}(U)$ for $\epsilon \in \{+, -\}$:
\begin{enumerate}[label=(T\Alph*)]
	\item $U = U_{\alpha +} U_{\alpha -}$;
	
	\item $\widehat{U}_{\alpha -} := \bigcup_{n\geq 0} \alpha^{-n}(U_{\alpha -})$ is closed.
\end{enumerate}

We note that $U_{\alpha +} = U_{\alpha^{-1} -}$ follows directly from the definition. For $\mathbf{a} \subseteq \Aut(G)$ a finite set of automorphisms of $G$ and $U \in \COS(G)$ we define $U_{\mathbf{a}} := \bigcap\limits_{\alpha \in \mathbf{a}} U_{\alpha +}$.

We say that $U \in \COS(G)$ is \emph{tidy for $g \in G$}, if $U$ is tidy for the inner automorphism $\gamma_g:G \to G, x \mapsto gxg^{-1}$.

\begin{lemma}[{\cite[Corollary~3.5]{Mo02}}]\label{Lemma:Cor3.5Möller}
	Let $g\in G$ and let $U \in \COS(G)$. Then $U$ is tidy for $g$ if and only if $[U : U \cap g^n U g^{-n}] = [U : U \cap gUg^{-1}]^n$ holds for all $n \in \Nb$.
\end{lemma}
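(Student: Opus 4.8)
The plan is to read the stated identity through Willis's scale function \cite{Wi04}. Write $\gamma:=\gamma_g$ and, for $n\in\Nb$, set $a_n:=[U:U\cap g^nUg^{-n}]=[U:U\cap\gamma^n(U)]$, which is finite since $U$ is compact and $U\cap\gamma^n(U)$ is open. One half of the equality is elementary and unconditional: intersecting the chain $U\supseteq U\cap g^mUg^{-m}\supseteq U\cap g^mUg^{-m}\cap g^{m+n}Ug^{-(m+n)}$ and conjugating the last index by $g^{-m}$ gives submultiplicativity $a_{m+n}\le a_m a_n$, hence $a_n\le a_1^{\,n}$ for all $n$; so the content is the reverse inequality. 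For this I would invoke the following standard facts from \cite{Wi04}: the scale $s(h):=\min_{V\in\COS(G)}[\gamma_h(V):\gamma_h(V)\cap V]$ of $h\in G$ is attained precisely at the subgroups tidy for $h$; tidiness for $h$ is equivalent to tidiness for $h^{-1}$; a subgroup tidy for $h$ is tidy for $h^n$ for every $n\in\Zb$; and $s(h^n)=s(h)^n$. Conjugating by $g^{-1}$ gives $[\gamma_g(V):\gamma_g(V)\cap V]=[V:V\cap g^{-1}Vg]$, so, applying this with $g^{-n}$ in place of $g$, for every $V\in\COS(G)$ one has
\[
 [V:V\cap g^nVg^{-n}]\;\ge\;s(g^{-n})\;=\;s(g^{-1})^{\,n},
\]
with equality for all $n$ whenever $V$ is tidy for $g$, and — in the case $n=1$ — with equality precisely when $V$ is tidy for $g$ (Willis's characterisation of tidy subgroups).

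\emph{If $U$ is tidy for $g$,} then $U$ is tidy for each $g^n$, so the displayed bound is an equality for $U$: $a_n=s(g^{-1})^{\,n}$ for all $n$. Taking $n=1$ gives $a_1=s(g^{-1})$, whence $a_n=a_1^{\,n}$. (This direction can alternatively be proved without the scale, working from the decomposition $U=U_{\gamma+}U_{\gamma-}$: tidiness yields the identity $U\cap g^nUg^{-n}=U_{\gamma+}\cdot g^nU_{\gamma-}g^{-n}$ together with $g^nLg^{-n}=L$ for $L:=U_{\gamma+}\cap U_{\gamma-}$, and these give $a_n=[U_{\gamma-}:g^nU_{\gamma-}g^{-n}]=[U_{\gamma-}:gU_{\gamma-}g^{-1}]^{\,n}=a_1^{\,n}$. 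Establishing the first identity is where condition (TB) is genuinely used, and it is the main obstacle on that route.)

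\emph{Conversely, if $a_n=a_1^{\,n}$ for all $n$,} set $a:=a_1$ and fix $V\in\COS(G)$ tidy for $g$ (which exists by \cite{Wi04}); put $p:=[U:U\cap V]$ and $q:=[V:U\cap V]$, both finite. A commensurability estimate bounds $a_n$ from above in terms of $V$: with $W:=U\cap V$ one has $a_n\le p\cdot[W:W\cap g^nWg^{-n}]$, and inserting the intermediate group $W\cap g^nVg^{-n}$ and using $W\le V$ together with the tidiness of $V$ gives
\[
 [W:W\cap g^nWg^{-n}]\;\le\;[V:V\cap g^nVg^{-n}]\cdot[g^nVg^{-n}:g^nVg^{-n}\cap g^nUg^{-n}]\;=\;s(g^{-1})^{\,n}\cdot q.
\]
Hence $a^n=a_n\le pq\,s(g^{-1})^{\,n}$ for all $n$, so $(a/s(g^{-1}))^n\le pq$ for every $n$, forcing $a\le s(g^{-1})$; since also $a\ge s(g^{-1})$ by the bound above, $a=s(g^{-1})$. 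Thus $U$ realises the minimum defining the scale of $g^{-1}$, so $U$ is tidy for $g^{-1}$, equivalently for $g$. On this route the only delicate point is the commensurability estimate, which is routine index bookkeeping once a tidy comparison subgroup is fixed; the real input throughout is the scale machinery of \cite{Wi04}, which I am treating as known.
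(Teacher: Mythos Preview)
The paper does not prove this lemma; it quotes it verbatim from M\"oller's paper \cite{Mo02} and uses it as a black box. So there is no in-paper argument to compare against.

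Your argument is correct. Both directions are sound: the forward direction reduces to $a_n=s(g^{-1})^n$ via the facts that tidiness passes to powers and $s(h^n)=s(h)^n$; the converse is the standard commensurability trick, bounding $a_n$ through a tidy comparison subgroup $V$ to force $a_1=s(g^{-1})$, whence $U$ realises the scale and is tidy. The index bookkeeping in the commensurability step is fine (the only subtle containment, $W\cap g^nWg^{-n}\subseteq U\cap g^nUg^{-n}$, holds because $W\le U$). One cosmetic point: the submultiplicativity $a_{m+n}\le a_ma_n$ you derive at the start is never used in either direction of your main argument, so you could drop it. This is essentially the route M\"oller takes in \cite{Mo02} as well, so your write-up would serve perfectly well as an expanded proof of the cited result.
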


Let $\fl{H} \leq \Aut(G)$. Then $U \in \COS(G)$ is said to be \emph{tidy for $\fl{H}$} if $U$ is tidy for all $\alpha \in \fl{H}$. The subgroup $\fl{H}$ is called \emph{flat} if there exists $U \in \COS(G)$ which is tidy for $\fl{H}$. 

Let $\fl{H} \leq \Aut(G)$ be flat and let $U \in \COS(G)$ be tidy for $\fl{H}$. We define $U_{\fl{H}0} := \bigcap \{ \alpha(U) \mid \alpha\in \fl{H} \}$. Moreover, a subgroup $E \leq U$ is called \emph{$U$-eigenfactor of $\fl{H}$} if for each $\alpha \in \fl{H}$ we have $\alpha(E) \leq E$ or $\alpha(E) \geq E$, and, moreover, $E = \bigcap_{\alpha \in \fl{H}, \alpha(E) \geq E} \alpha(U)$.

\begin{lemma}\label{Lemma: UH0 U-eigenfactor}
	Let $\fl{H} \leq \Aut(G)$ be flat and let $U$ be tidy for $\fl{H}$. Then $U_{\fl{H}0}$ is a $U$-eigenfactor of $\fl{H}$.
\end{lemma}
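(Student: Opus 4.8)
The plan is to reduce everything to the single observation that $U_{\fl{H}0}$ is $\fl{H}$-stable because $\fl{H}$ is a group; in fact the flatness hypothesis and tidiness of $U$ will play no role in the argument. First I would record that $U_{\fl{H}0} = \bigcap_{\alpha \in \fl{H}} \alpha(U)$ is a subgroup contained in $U$: each $\alpha(U)$ is a subgroup of $G$ since $\alpha \in \Aut(G)$ and $U \leq G$, so the intersection is a subgroup, and since $\id \in \fl{H}$ with $\id(U) = U$, the term $U$ occurs in the intersection, whence $U_{\fl{H}0} \leq U$.

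Next comes the key step. For every $\beta \in \fl{H}$ I claim $\beta(U_{\fl{H}0}) = U_{\fl{H}0}$. Since $\beta$ is a bijection of $G$ it commutes with intersections, so $\beta(U_{\fl{H}0}) = \bigcap_{\alpha \in \fl{H}} \beta(\alpha(U)) = \bigcap_{\alpha \in \fl{H}} (\beta\alpha)(U)$, and as $\alpha$ ranges over the group $\fl{H}$ so does $\beta\alpha$; hence this intersection equals $\bigcap_{\gamma \in \fl{H}} \gamma(U) = U_{\fl{H}0}$. In particular, for each $\alpha \in \fl{H}$ we have simultaneously $\alpha(U_{\fl{H}0}) \leq U_{\fl{H}0}$ and $\alpha(U_{\fl{H}0}) \geq U_{\fl{H}0}$, which verifies the first requirement in the definition of a $U$-eigenfactor.

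It remains to check the second requirement. Because $\alpha(U_{\fl{H}0}) \geq U_{\fl{H}0}$ (indeed with equality) for every $\alpha \in \fl{H}$, the index set $\{\alpha \in \fl{H} \mid \alpha(U_{\fl{H}0}) \geq U_{\fl{H}0}\}$ is all of $\fl{H}$, so
\[ \bigcap_{\alpha \in \fl{H},\, \alpha(U_{\fl{H}0}) \geq U_{\fl{H}0}} \alpha(U) \;=\; \bigcap_{\alpha \in \fl{H}} \alpha(U) \;=\; U_{\fl{H}0}, \]
which is exactly what is needed. There is no genuine obstacle here; the only point one should not overlook is that $\id \in \fl{H}$ (guaranteeing $U_{\fl{H}0} \leq U$), which holds since $\fl{H}$ is by hypothesis a subgroup of $\Aut(G)$.
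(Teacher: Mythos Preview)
Your proposal is correct and follows essentially the same argument as the paper's proof: both hinge on the observation that $\alpha(U_{\fl{H}0}) = U_{\fl{H}0}$ for every $\alpha \in \fl{H}$, so that the index set in the eigenfactor condition is all of $\fl{H}$ and the required intersection collapses back to $U_{\fl{H}0}$. Your version simply spells out in more detail why $U_{\fl{H}0}$ is $\fl{H}$-invariant (via the reindexing $\alpha \mapsto \beta\alpha$) and why $U_{\fl{H}0} \leq U$ (via $\id \in \fl{H}$), points the paper leaves implicit.
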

\begin{proof}
	Note that $\alpha(U_{\fl{H}0}) = U_{\fl{H}0}$ for all $\alpha \in \fl{H}$. This implies $\bigcap_{\alpha \in \fl{H}, \alpha(U_{\fl{H}0}) \geq U_{\fl{H}0}} \alpha(U) = \bigcap_{\alpha \in \fl{H}} \alpha(U) = U_{\fl{H}0}$ and the claim follows.
\end{proof}

\begin{lemma}\label{Lemma: U-eigenfactors}
	Let $\fl{H} \leq \Aut(G)$ be flat, let $U$ be tidy for $\fl{H}$ and let $\mathbf{a} \subseteq \fl{H}$ be any subset (not necessarily finite) such that for each $\alpha \in \fl{H}$ we have $\alpha(U_{\mathbf{a}}) \leq U_{\mathbf{a}}$ or $\alpha(U_{\mathbf{a}}) \geq U_{\mathbf{a}}$. Then $U_{\mathbf{a}}$ is a $U$-eigenfactor of $\fl{H}$.
\end{lemma}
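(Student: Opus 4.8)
The plan is to verify the two defining properties of a $U$-eigenfactor directly for $E := U_{\mathbf{a}}$. The second property — that $\alpha(U_{\mathbf{a}}) \leq U_{\mathbf{a}}$ or $\alpha(U_{\mathbf{a}}) \geq U_{\mathbf{a}}$ for every $\alpha \in \fl{H}$ — is exactly the hypothesis imposed in the statement, so nothing is to be done there. Hence the real content is the identity
\[
	U_{\mathbf{a}} = \bigcap_{\alpha \in \fl{H},\ \alpha(U_{\mathbf{a}}) \geq U_{\mathbf{a}}} \alpha(U).
\]
I would prove this by establishing the two inclusions separately.

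For the inclusion ``$\subseteq$'': by definition $U_{\mathbf{a}} = \bigcap_{\beta \in \mathbf{a}} U_{\beta+}$, and $U_{\beta+} = \bigcap_{n \geq 0} \beta^{n}(U) \leq U = \beta^{0}(U)$. More usefully, for each $\beta \in \mathbf{a}$ the set $U_{\beta+}$ is $\beta$-invariant in the sense that $\beta(U_{\beta+}) \leq U_{\beta+}$, hence $\beta(U_{\mathbf{a}}) \leq U_{\mathbf{a}}$ would only follow after intersecting — but I only need that $\beta$ itself belongs to the index set. Concretely: I claim that for every $\beta \in \mathbf{a}$ one has $\beta^{-1}(U_{\mathbf{a}}) \geq U_{\mathbf{a}}$, equivalently $U_{\mathbf{a}} \subseteq \beta(U_{\mathbf{a}})$; wait — this is not automatic because $U_{\mathbf{a}}$ involves an intersection over all of $\mathbf{a}$, not just $\beta$. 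The cleaner route is to use the hypothesis: for each $\alpha \in \fl{H}$ exactly one of $\alpha(U_{\mathbf{a}}) \leq U_{\mathbf{a}}$, $\alpha(U_{\mathbf{a}}) \geq U_{\mathbf{a}}$ holds (or both, when they are equal). Consider any $\alpha$ in the index set of the right-hand intersection, i.e.\ with $\alpha(U_{\mathbf{a}}) \geq U_{\mathbf{a}}$. Then $U_{\mathbf{a}} \leq \alpha(U_{\mathbf{a}}) \leq \alpha(U)$, since $U_{\mathbf{a}} \leq U$. As this holds for every such $\alpha$, we get $U_{\mathbf{a}} \subseteq \bigcap_{\alpha(U_{\mathbf{a}}) \geq U_{\mathbf{a}}} \alpha(U)$.

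For the reverse inclusion ``$\supseteq$'', I must show that an element $x$ lying in $\alpha(U)$ for every $\alpha \in \fl{H}$ with $\alpha(U_{\mathbf{a}}) \geq U_{\mathbf{a}}$ already lies in $U_{\mathbf{a}} = \bigcap_{\beta \in \mathbf{a}} \bigcap_{n \geq 0} \beta^{n}(U)$. So fix $\beta \in \mathbf{a}$ and $n \geq 0$; I need $x \in \beta^{n}(U)$, i.e.\ $\alpha := \beta^{n}$ should be one of the indices appearing on the right, i.e.\ I need $\beta^{n}(U_{\mathbf{a}}) \geq U_{\mathbf{a}}$. Now $U_{\mathbf{a}} \leq U_{\beta+}$ and $U_{\beta+}$ is $\beta^{-1}$-stable going up: $\beta^{n}(U_{\beta+}) = \beta^{n}\!\left(\bigcap_{m \geq 0}\beta^{m}(U)\right) = \bigcap_{m \geq n}\beta^{m}(U) \supseteq \bigcap_{m \geq 0}\beta^{m}(U) = U_{\beta+}$. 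But I need this for $U_{\mathbf{a}}$, not $U_{\beta+}$. Here the hypothesis rescues the argument again: since $\mathbf{a} \subseteq \fl{H}$, the element $\beta^{n} \in \fl{H}$ (as $\fl{H}$ is a group and powers stay inside), so by hypothesis either $\beta^{n}(U_{\mathbf{a}}) \geq U_{\mathbf{a}}$ — in which case $x \in \beta^{n}(U_{\mathbf{a}}) \leq$ — no: I want $x \in \beta^{n}(U)$; from $x \in \alpha(U)$ for all indexed $\alpha$ and $\beta^n$ being such an index, done — or $\beta^{n}(U_{\mathbf{a}}) \leq U_{\mathbf{a}}$; but combined with $\beta^n(U_{\beta+}) \supseteq U_{\beta+}$ and $U_{\mathbf{a}}$ being an intersection... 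The cleanest formulation: show first that $\beta(U_{\mathbf{a}}) \geq U_{\mathbf{a}}$ fails only if $\beta(U_{\mathbf{a}}) \leq U_{\mathbf{a}}$, then iterate; but note that by the same token $\beta^{-1}(U_{\mathbf{a}})$ vs $U_{\mathbf{a}}$ is also comparable. I expect the main obstacle to be precisely this bookkeeping: reconciling the per-$\beta$ filtration invariance of $U_{\beta+}$ with the simultaneous intersection $U_{\mathbf{a}}$, and showing that the trichotomy forced by the hypothesis pins down that each $\beta^{n}$ (and each $\beta^{-n}$) lies on the ``$\geq$'' side for the intersected group. Once that is settled, both inclusions close and $U_{\mathbf{a}}$ is a $U$-eigenfactor; this mirrors the proof of Lemma~\ref{Lemma: UH0 U-eigenfactor}, and indeed taking $\mathbf{a} = \fl{H}$ should recover $U_{\fl{H}0}$ as a special case.
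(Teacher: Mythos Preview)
Your overall strategy is the same as the paper's: the first eigenfactor condition is the hypothesis, the inclusion $U_{\mathbf{a}} \subseteq \bigcap_{\alpha(U_{\mathbf{a}})\ge U_{\mathbf{a}}}\alpha(U)$ is immediate, and the reverse inclusion reduces to showing that $\beta^{n}(U_{\mathbf{a}}) \ge U_{\mathbf{a}}$ for every $\beta \in \mathbf{a}$ and $n\ge 0$, so that each $\beta^{n}$ appears among the indices on the right and hence the right-hand side sits inside $\bigcap_{n\ge 0}\beta^{n}(U)=U_{\beta+}$.

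The gap is that you do not establish $\beta(U_{\mathbf{a}}) \ge U_{\mathbf{a}}$ for $\beta \in \mathbf{a}$, and you underestimate what this requires. You describe it as ``bookkeeping'' and hope that the comparability hypothesis together with the obvious inclusion $\beta^{n}(U_{\beta+}) \supseteq U_{\beta+}$ will pin down the correct alternative. It does not: knowing only that $\beta(U_{\mathbf{a}})$ is comparable to $U_{\mathbf{a}}$ and that $\beta$ expands the single factor $U_{\beta+}$ says nothing about how $\beta$ acts on the other factors $U_{\gamma+}$, $\gamma \in \mathbf{a}\setminus\{\beta\}$, in the intersection. (Your parenthetical ``and each $\beta^{-n}$'' on the $\ge$ side is also off; that would force $\beta(U_{\mathbf{a}})=U_{\mathbf{a}}$.)

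The paper closes this gap by invoking a genuine structural fact about tidy subgroups, namely \cite[Lemma~2.4.3]{willis2025flatgroups}, which gives $\alpha(U_{\alpha+}\cap U_{\gamma+}) \ge U_{\alpha+}\cap U_{\gamma+}$ for any $\alpha,\gamma \in \fl{H}$. Intersecting over $\gamma \in \mathbf{a}$ yields $\alpha(U_{\mathbf{a}}) \ge U_{\mathbf{a}}$ for each $\alpha \in \mathbf{a}$ directly, and then iteration gives all $\alpha^{n}$, $n\ge 0$. So the missing ingredient is not bookkeeping but this lemma (or an equivalent argument exploiting tidiness); once you cite it, your outline becomes the paper's proof.
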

\begin{proof}
	By definition it suffices to show that $U_{\mathbf{a}} = \bigcap_{\alpha \in \fl{H}, \alpha(U_{\mathbf{a}}) \geq U_{\mathbf{a}}} \alpha(U)$ holds. One inclusion is obvious. For the other we deduce from \cite[Lemma~$2.4.3$]{willis2025flatgroups} that for all $\alpha \in \mathbf{a}$ we have
	\[ \alpha(U_{\mathbf{a}}) = \alpha\left( \bigcap_{\beta \in \mathbf{a}} (U_{\alpha +} \cap U_{\beta +}) \right) = \bigcap_{\beta \in \mathbf{a}} \alpha(U_{\alpha +} \cap U_{\beta +}) \geq \bigcap_{\beta \in \mathbf{a}} (U_{\alpha +} \cap U_{\beta +}) = U_{\mathbf{a}}. \]
	In particular, $\alpha^n(U_{\mathbf{a}}) \geq U_{\mathbf{a}}$ holds for all $\alpha \in \mathbf{a}$ and $n\in \Nb$. Hence $\bigcap_{\alpha \in \fl{H}, \alpha(U_{\mathbf{a}}) \geq U_{\mathbf{a}}} \alpha(U) \leq U_{\beta +}$ for each $\beta \in \mathbf{a}$. We infer $\bigcap_{\alpha \in \fl{H}, \alpha(U_{\mathbf{a}}) \geq U_{\mathbf{a}}} \alpha(U) \leq U_{\mathbf{a}}$. This finishes the claim.
\end{proof}

We recall that by \cite[Definition~$2.2.1$]{willis2025flatgroups} a closed, non-compact, $\fl{H}$-invariant subgroup $H \leq G$ is called a \emph{scaling subgroup for $\fl{H}$} if there exists a compact, relatively open subgroup $U \leq H$ such that
\begin{enumerate}[label=(\roman*)]
	\item $H = \bigcup \{ \alpha(U) \mid \alpha \in \fl{H} \}$;
	
	\item For each $\alpha \in \fl{H}$ we have $\alpha(U) \geq U$ or $\alpha(U) \leq U$.
\end{enumerate}

\begin{lemma}\label{Lemma: eigenfactor yields unreduced subgroup}
	Let $\fl{H} \leq \Aut(G)$ be flat and let $U$ be tidy for $\fl{H}$. Let $\alpha_1, \ldots, \alpha_k \in \fl{H}$ be such that $E := U_{\{\alpha_1, \ldots, \alpha_k\}}$ is a $U$-eigenfactor of $\fl{H}$. If $E \neq U_{\fl{H}0}$, then $\widehat{E} := \bigcup_{\alpha \in \fl{H}} \alpha(E)$ is a scaling subgroup for $\fl{H}$.
\end{lemma}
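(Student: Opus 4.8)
The plan is to check, clause by clause, that $\widehat{E}$ --- with $E$ itself in the role of the compact, relatively open subgroup --- is a scaling subgroup for $\fl{H}$ in the sense of \cite[Definition~2.2.1]{willis2025flatgroups}. Several ingredients are immediate. Since $E=\bigcap_{i=1}^{k}U_{\alpha_i+}$ is an intersection of closed subgroups of the compact group $U$, it is a compact subgroup of $G$; clause~(i) of \cite[Definition~2.2.1]{willis2025flatgroups} for the witness $E$ is literally the definition of $\widehat{E}$; clause~(ii) is precisely the defining property of $E$ being a $U$-eigenfactor; and $\widehat{E}$ is $\fl{H}$-invariant, since $\beta\fl{H}=\fl{H}$ gives $\beta(\widehat{E})=\bigcup_{\alpha\in\fl{H}}\beta\alpha(E)=\widehat{E}$ for every $\beta\in\fl{H}$. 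What remains to prove is that (A)~$\widehat{E}$ is a subgroup of $G$, (B)~$\widehat{E}$ is closed, (C)~$E$ is relatively open in $\widehat{E}$, and (D)~$\widehat{E}$ is non-compact.

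For (A) I would work with $\mc{F}:=\{\alpha(E)\mid\alpha\in\fl{H},\ \alpha(E)\supseteq E\}$. One has $\widehat{E}=\bigcup\mc{F}$, because every $\alpha$ with $\alpha(E)\subseteq E$ only contributes $\alpha(E)\subseteq E=\id(E)\in\mc{F}$. I would then check that $\mc{F}$ is upward directed: given $\alpha(E),\beta(E)\in\mc{F}$, the inclusion $\beta\alpha(E)\supseteq\beta(E)\supseteq E$ shows $\beta\alpha(E)\in\mc{F}$, while applying $\alpha$ to the $U$-eigenfactor dichotomy ``$\alpha\inv\beta\alpha(E)\subseteq E$ or $\alpha\inv\beta\alpha(E)\supseteq E$'' gives either $\beta\alpha(E)\supseteq\alpha(E)$ --- so that $\beta\alpha(E)$ dominates both $\alpha(E)$ and $\beta(E)$ --- or $\beta\alpha(E)\subseteq\alpha(E)$, so that $\alpha(E)\supseteq\beta\alpha(E)\supseteq\beta(E)$ and $\alpha(E)$ dominates both. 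A directed union of subgroups is a subgroup, so $\widehat{E}$ is a subgroup; in particular every finite subset of $\widehat{E}$ lies in a single member of $\mc{F}$.

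For (D) --- the one step that uses the hypothesis $E\neq U_{\fl{H}0}$ --- note first that $U_{\fl{H}0}\subseteq E$, since $U_{\fl{H}0}\subseteq\alpha_i^{n}(U)$ for all $i$ and all $n\geq0$, hence $U_{\fl{H}0}\subseteq U_{\alpha_i+}$ for each $i$. As $E$ is a $U$-eigenfactor, $E=\bigcap_{\alpha\in\fl{H},\,\alpha(E)\supseteq E}\alpha(U)$; were the index set all of $\fl{H}$, this intersection would be $U_{\fl{H}0}$, contradicting $E\neq U_{\fl{H}0}$. So there is $\alpha_{0}\in\fl{H}$ with $\alpha_{0}(E)\not\supseteq E$, whence $\alpha_{0}(E)\subsetneq E$ by the dichotomy; putting $\beta:=\alpha_{0}\inv$ and applying the automorphisms $\beta^{n}$ to the strict inclusion $E\subsetneq\beta(E)$ yields a strictly increasing chain $E\subsetneq\beta(E)\subsetneq\beta^{2}(E)\subsetneq\cdots$ of subgroups of $\widehat{E}$. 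Granting (C), $E$ is then compact and open in $\widehat{E}$, so if $\widehat{E}$ were compact we would have $[\widehat{E}:E]<\infty$, which is incompatible with that chain; hence $\widehat{E}$ is non-compact.

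The crux is (B), the closedness of $\widehat{E}$, and with it (C). The set $\widehat{E}=\bigcup_{\alpha\in\fl{H}}\alpha(E)$ is exactly of the shape whose closedness is established in \cite{willis2025flatgroups} when constructing the eigenfactor decomposition of a tidy subgroup --- it is the flat-group counterpart of condition~(TB), i.e.\ of the closedness of $\widehat{U}_{\alpha-}$ for a single automorphism \cite{Wi04} --- and I would invoke those results, whose proofs use crucially that $U$ is tidy for \emph{all} of $\fl{H}$. Once $\widehat{E}$ is closed it is a totally disconnected locally compact group, and (C) then follows formally: a Baire-category argument (the union $\bigcup\mc{F}$ being countable) shows that some $\nu(E)\in\mc{F}$ is open in $\widehat{E}$, and since $[\nu(E):E]<\infty$ --- again a consequence of $U$ being tidy for $\fl{H}$, cf.\ \cite{willis2025flatgroups} --- the cosets of $E$ in the clopen subgroup $\nu(E)$ are clopen, so $E$ is open in $\widehat{E}$. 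Carrying out the closedness in (B) by hand, rather than quoting it, is the one genuinely technical point, and is where I expect essentially all of the difficulty to lie.
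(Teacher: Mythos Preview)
Your directedness argument for (A) is correct and in fact cleaner than the paper's route, and (D) is fine once (C) is available. The genuine gap is in (C), not only in the deferred (B). Your Baire argument needs $\mc{F}$ to be countable, but nothing in the hypotheses forces $\fl{H}$, or the set $\{\alpha(E)\mid\alpha\in\fl{H}\}$, to be countable; and your claim $[\nu(E):E]<\infty$ is not an immediate consequence of tidiness either, since $E$ is a priori only a closed (not open) subgroup of $G$, and two nested compact subgroups need not have finite index. Both of these unjustified steps are exactly what the paper's argument supplies, and neither is for free.

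The paper proceeds differently and avoids Baire altogether. The key step is the identity $E=\widehat{E}\cap U$: once this is known, $E$ is automatically relatively open in $\widehat{E}$ (as $U$ is open in $G$), so (C) is immediate, and closedness of $\widehat{E}$ reduces via \cite[Theorem~II.5.9]{HR79} to closedness of $\widehat{E}\cap U=E$, which is clear. The identity itself is proved using $\alpha_i^{-1}(E)\leq E$ (from \cite[Lemma~2.4.3]{willis2025flatgroups}), the commutator relations $[\alpha,\alpha_i^{n}]\in\fl{H}_{\mathrm u}$ from \cite[Corollary~2.5.3]{willis2025flatgroups}, and an accumulation-point argument via \cite[Proposition~1.1.6]{willis2025flatgroups}. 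Separately, the paper shows that $\{\alpha(E)\mid\alpha\in\fl{H}\}=\{\beta_*^{z}(E)\mid z\in\Zb\}$ for a $\beta_*$ minimising $[\beta_*(E):E]$ among $\beta$ with $\beta(E)>E$; this is an index computation of Euclidean-algorithm type and is what gives both total ordering and countability of the family. I would suggest aiming for the identity $E=\widehat{E}\cap U$ directly; it replaces your (C) argument entirely and turns (B) into a one-line citation.
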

\begin{proof}
	By definition, $\widehat{E}$ is $\fl{H}$-invariant and we have $\alpha(E) \leq E$ or $\alpha(E) \geq E$ for each $\alpha \in \fl{H}$.
	
	\setcounter{claim}{0}
	\begin{claim}\label{Claim: E = Ehat cap U}
		We have $E = \widehat{E} \cap U$.
	\end{claim}
	
	One inclusion is obvious. Thus we let $x \in \widehat{E} \cap U$. Then there exists $\alpha \in \fl{H}$ with $x\in \alpha(E)$. As $E$ is closed and a subgroup of $U$, $E$ is compact and $\alpha(E)$ is compact as well. Recall that $\alpha(E) \leq E$ or $\alpha(E) \geq E$. If $\alpha(E) \leq E$, then $x \in E$ and we are done. Thus we can assume that $E \leq \alpha(E)$. Recall that $E = U_{\{\alpha_1, \ldots, \alpha_k\}}$. Using \cite[Lemma~$2.4.3$]{willis2025flatgroups} we obtain $\alpha_i^{-1}(U_{\alpha_i +} \cap U_{\alpha_j +}) \leq U_{\alpha_i +} \cap U_{\alpha_j +}$ for all $1 \leq i\neq j \leq k$. This implies $\alpha_i^{-1}(E) \leq E$ similar as in the proof of Lemma~\ref{Lemma: U-eigenfactors}. Now \cite[Corollary~$2.5.3$]{willis2025flatgroups} and \cite[Corollary~$2.1.4$]{willis2025flatgroups} yield for each $n\in \mathbb{N}$:
	\[ \alpha^{-1} \alpha_i^{-n}(x) = [\alpha, \alpha_i^n] \alpha_i^{-n} \alpha^{-1}(x) \in [\alpha, \alpha_i^n](E) = E. \]
	Hence $\alpha_i^{-n}(x) \in \alpha(E)$ for all $n \in \mathbb{N}$. Since an infinite sequence in a compact set has an accumulation point, we deduce $x \in U_{\alpha_i^{-1} -} =  U_{\alpha_i +}$ by \cite[Proposition~$1.1.6$]{willis2025flatgroups}. Thus $x \in \bigcap_{i=1}^k U_{\alpha_i +} = E$.
	
	\begin{claim}
		$\widehat{E}$ is closed.
	\end{claim}
	
	As $E$ is a $U$-eigenfactor of $\fl{H}$, we know that $\alpha(E)$ contains or is contained in $E$ for all $\alpha \in \fl{H}$. As $E \neq U_{\fl{H}0}$, there exists $\alpha \in \fl{H}$ with $\alpha(E) \lneq E$ and hence $E \lneq \alpha^{-1}(E)$. Suppose $\beta_* \in \fl{H}$ with $\beta_*(E) > E$ and $[\beta_*(E) : E] = \min \{ [\beta(E) : E] \mid \beta \in \fl{H} \text{ and } \beta(E) > E \}$, and let $\alpha \in \fl{H}$. We show that $\alpha(E) = \beta_*^z(E)$ for some $z\in \Zb$. We distinguish the following cases:
	\begin{itemize}[label=$\bullet$]
		\item $\alpha(E) = E$: Then $\alpha(E) = E = \beta_*^0(E)$.
		
		\item $\alpha(E) > E$: As $E$ is a $U$-eigenfactor, we have $\alpha^{-1} \beta_*(E) \geq E$ or $\alpha^{-1} \beta_*(E) \leq E$. Suppose $\alpha^{-1} \beta_*(E) \geq E$ and hence $\beta_*(E) \geq \alpha(E)$. Then we have
		\[ [\beta_*(E) : E] = [\beta_*(E) : \alpha(E)] \cdot [\alpha(E) : E]. \]
		The minimality of $\beta_*$ yields $[\beta_*(E) : \alpha(E)] = 1$ and hence $\beta_*(E) = \alpha(E)$. Thus we can assume $\alpha^{-1} \beta_*(E) \leq E$ and hence $\beta_*(E) \leq \alpha(E)$. If $\beta_*^n(E) \leq \alpha(E)$ for all $n \in \Nb$, then we would have
		\[ [\alpha(E) : E] = [\alpha(E) : \beta_*^n(E)] \prod_{i=0}^{n-1} [\beta_*^{i+1}(E) : \beta_*^i(E)] \geq [\beta_*(E) : E]^{n-1} \geq 2^{n-1}. \]
		This is a contradiction. Thus there exists $n \in \Nb$ with $\beta_*^n(E) \leq \alpha(E) < \beta_*^{n+1}(E)$ (note that $\beta_*^{n+1}(E) \not\leq \alpha(E)$ implies $\alpha(E) < \beta_*^{n+1}(E)$). In particular, $E \leq \beta_*^{-n} \alpha(E) < \beta_*(E)$ and we compute
		\[ [\beta_*(E) : E] = [\beta_*(E) : \beta_*^{-n}\alpha(E)] \cdot [\beta_*^{-n}\alpha(E) : E] > [\beta_*^{-n}\alpha(E) : E]. \]
		The minimality of $\beta_*$ implies $\beta_*^{-n}\alpha(E) = E$. It follows $\alpha(E) = \beta_*^n(E)$.
		
		\item $\alpha(E) < E$: Then $\alpha^{-1}(E) > E$ and we deduce from the previous case $\alpha^{-1}(E) = \beta_*^n(E)$ for some $n\in \Nb$. Note that $[\alpha, \beta_*^n] \in \fl{H}_u$ by \cite[Corollary~$2.5.3$]{willis2025flatgroups}. Using \cite[Corollary~$2.1.4$]{willis2025flatgroups} we infer
		\[ E = [\alpha, \beta_*^n](E) = \alpha^{-1} \beta_*^{-n} \alpha \beta_*^n(E) = \alpha^{-1} \beta_*^{-n}(E). \]
		We deduce $\alpha(E) = \beta_*^{-n}(E)$.
	\end{itemize}
	We conclude $\{ \alpha(E) \mid \alpha \in \fl{H} \} = \{ \beta_*^z(E) \mid z\in \Zb \}$. Thus $\widehat{E}$ is a group because it is a nested union of subgroups of $G$. As in \cite[Proposition~$2.2.5$]{willis2025flatgroups}, $\widehat{E}$ is closed if $\widehat{E} \cap U$ is closed (cf.\ \cite[Theorem II.5.9]{HR79}). As $\widehat{E} \cap U = E$ by Claim~\ref{Claim: E = Ehat cap U} and $E$ is closed, $\widehat{E}$ is closed as well.
	
	As $E$ is compact and relatively open in $\widehat{E}$, it is left to show that $\widehat{E}$ is non-compact. By \cite[Proposition~$1.1.5$]{willis2025flatgroups}, $E$ is tidy for $\fl{H} \vert_{\widehat{E}}$. Thus $s(\beta_* \vert_{\widehat{E}}) > 1$ and $\widehat{E}$ is not compact.
\end{proof}

\begin{lemma}\label{Lemma: flat U decomposition into eigenfactors}
	Let $\fl{H} \leq \Aut(G)$ be flat and $U \in \COS(G)$ be tidy for $\fl{H}$. Let $\alpha_1, \ldots, \alpha_n \in \fl{H}$ and let
	\[ U = \prod_{\epsilon\in\{-,+\}^n} U_\varepsilon \]
	be as in \cite[Proposition~$2.4.2$]{willis2025flatgroups}. Suppose that for each $\alpha \in \fl{H}$ we have $\alpha(U_{\epsilon}) \leq U_{\epsilon}$ or $\alpha(U_{\epsilon}) \geq U_{\epsilon}$. Then $\{ U_{\epsilon}, U_{\fl{H}0} \mid \epsilon \in \{-, +\}^n \}$ is the set of all $U$-eigenfactors of $\fl{H}$.
\end{lemma}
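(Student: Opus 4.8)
The plan is to prove the two inclusions of the asserted set equality separately. For "$\supseteq$", i.e.\ that each $U_{\epsilon}$ and $U_{\fl{H}0}$ is a $U$-eigenfactor of $\fl{H}$: by \cite[Proposition~$2.4.2$]{willis2025flatgroups} we have $U_{\epsilon} = \bigcap_{i=1}^{n} U_{\alpha_{i}\epsilon_{i}}$, and since $U_{\alpha-} = U_{\alpha^{-1}+}$ this equals $U_{\mathbf{a}_{\epsilon}}$ for the finite set $\mathbf{a}_{\epsilon} := \{\alpha_{1}^{\epsilon_{1}}, \dots, \alpha_{n}^{\epsilon_{n}}\} \subseteq \fl{H}$, where I write $\alpha^{+} = \alpha$ and $\alpha^{-} = \alpha^{-1}$. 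The standing hypothesis says precisely that $\alpha(U_{\mathbf{a}_{\epsilon}})$ is comparable to $U_{\mathbf{a}_{\epsilon}}$ for every $\alpha \in \fl{H}$, so Lemma~\ref{Lemma: U-eigenfactors} shows that $U_{\epsilon}$ is a $U$-eigenfactor, and Lemma~\ref{Lemma: UH0 U-eigenfactor} handles $U_{\fl{H}0}$.

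The substance is the inclusion "$\subseteq$". Let $E$ be an arbitrary $U$-eigenfactor of $\fl{H}$. If $\alpha(E) = E$ for all $\alpha \in \fl{H}$, then $\{\alpha \in \fl{H} \mid \alpha(E) \geq E\} = \fl{H}$ and the defining formula of an eigenfactor gives $E = \bigcap_{\alpha \in \fl{H}} \alpha(U) = U_{\fl{H}0}$; so from now on I assume $E$ is not $\fl{H}$-invariant. For each $i$ I set $\epsilon_{i} := +$ if $\alpha_{i}(E) \geq E$ and $\epsilon_{i} := -$ otherwise, so that $\alpha_{i}^{\epsilon_{i}}(E) \geq E$ in every case. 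Applying the automorphism $\alpha_{i}^{\epsilon_{i}}$ repeatedly and using monotonicity of automorphisms gives $(\alpha_{i}^{\epsilon_{i}})^{m}(E) \geq E$ for all $m \geq 0$, hence $E \leq (\alpha_{i}^{\epsilon_{i}})^{m}(U)$ by the defining formula of $E$, hence $E \leq \bigcap_{m \geq 0}(\alpha_{i}^{\epsilon_{i}})^{m}(U) = U_{\alpha_{i}\epsilon_{i}}$. Intersecting over $i$ yields $E \leq U_{\epsilon}$ for this $\epsilon$.

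Now I would bring in Lemma~\ref{Lemma: eigenfactor yields unreduced subgroup}. If $U_{\epsilon} = U_{\fl{H}0}$, then $U_{\fl{H}0} \leq E \leq U_{\epsilon} = U_{\fl{H}0}$ forces $E = U_{\fl{H}0}$, so assume $U_{\epsilon} \neq U_{\fl{H}0}$. Since $U_{\epsilon} = U_{\mathbf{a}_{\epsilon}}$ is a $U$-eigenfactor, Lemma~\ref{Lemma: eigenfactor yields unreduced subgroup} and its proof provide the $\fl{H}$-invariant scaling subgroup $\widehat{U_{\epsilon}} = \bigcup_{\alpha \in \fl{H}} \alpha(U_{\epsilon})$ together with the facts that $\widehat{U_{\epsilon}} \cap U = U_{\epsilon}$ and that $\{\alpha(U_{\epsilon}) \mid \alpha \in \fl{H}\} = \{\beta_{*}^{z}(U_{\epsilon}) \mid z \in \Zb\}$ is a chain increasing in $z$. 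Combining $\widehat{U_{\epsilon}} \cap U = U_{\epsilon}$ with $\fl{H}$-invariance of $\widehat{U_{\epsilon}}$ gives $\alpha(U_{\epsilon}) = \alpha(U) \cap \widehat{U_{\epsilon}}$ for every $\alpha \in \fl{H}$. As $E \leq U_{\epsilon} \leq \widehat{U_{\epsilon}}$, intersecting this identity over $\{\alpha \mid \alpha(E) \geq E\}$ rewrites the defining formula of $E$ as $E = \bigcap_{\alpha(E) \geq E} \alpha(U_{\epsilon})$, an intersection of a subset $\{\beta_{*}^{z}(U_{\epsilon}) \mid z \in Z\}$ of that chain. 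If $Z$ were unbounded below, this intersection would equal $\bigcap_{z \in \Zb} \beta_{*}^{z}(U_{\epsilon}) = \bigcap_{\alpha \in \fl{H}} \alpha(U_{\epsilon}) = \widehat{U_{\epsilon}} \cap U_{\fl{H}0} = U_{\fl{H}0}$ (using $U_{\fl{H}0} \leq U_{\epsilon}$), contradicting that $E$ is not $\fl{H}$-invariant. Hence $Z$ has a minimum $z_{0}$, so $E = \beta_{*}^{z_{0}}(U_{\epsilon}) = \alpha_{0}(U_{\epsilon})$ for some $\alpha_{0} \in \fl{H}$ with $\alpha_{0}(E) \geq E$; substituting $E = \alpha_{0}(U_{\epsilon})$ into $\alpha_{0}(E) \geq E$ and applying $\alpha_{0}^{-1}$ gives $U_{\epsilon} \leq E$, hence $E = U_{\epsilon}$.

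The main obstacle I anticipate is bookkeeping with the external input rather than a genuine difficulty: one must make sure that $U_{\epsilon}$ is genuinely of the form $U_{\mathbf{a}}$ so that Lemmas~\ref{Lemma: U-eigenfactors} and \ref{Lemma: eigenfactor yields unreduced subgroup} apply, and one must extract from the proof of Lemma~\ref{Lemma: eigenfactor yields unreduced subgroup} that the chain $\{\alpha(U_{\epsilon}) \mid \alpha \in \fl{H}\}$ is order-isomorphic to $\Zb$ — it is the discreteness of this chain that rules out an intermediate limit and forces the dichotomy $E \in \{U_{\epsilon}, U_{\fl{H}0}\}$. Once those structural facts are in place, the identity $\alpha(U_{\epsilon}) = \alpha(U) \cap \widehat{U_{\epsilon}}$ and the closing substitution are short.
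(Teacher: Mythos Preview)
Your proof is correct and follows essentially the same strategy as the paper's: both use Lemmas~\ref{Lemma: UH0 U-eigenfactor} and~\ref{Lemma: U-eigenfactors} for one inclusion, choose $\epsilon$ so that $E \leq U_{\epsilon}$, and then exploit the $\beta_*$-chain structure from the proof of Lemma~\ref{Lemma: eigenfactor yields unreduced subgroup} to force $E \in \{U_{\epsilon}, U_{\fl{H}0}\}$. The only difference is tactical: the paper, assuming $E \neq U_{\epsilon}$, produces a single $\alpha$ with $\alpha(E) \geq E$ but $\alpha(U_{\epsilon}) < U_{\epsilon}$ and iterates to land in $U_{\fl{H}0}$, whereas you use the identity $\alpha(U_{\epsilon}) = \alpha(U) \cap \widehat{U_{\epsilon}}$ to rewrite $E$ as a sub-intersection of the $\Zb$-indexed chain and read off the dichotomy directly.
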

\begin{proof}
	The proof is an adaptation of \cite[Lemma $6.2$]{Wi04}. However, we give the datails here. By Lemma~\ref{Lemma: UH0 U-eigenfactor} and Lemma~\ref{Lemma: U-eigenfactors}, $U_{\fl{H}0}$ and $U_{\epsilon}$ are $U$-eigenfactors of $\fl{H}$. Now let $E$ be any $U$-eigenfactor of $\fl{H}$. Then there exist $\epsilon_i \in \{+, -\}$ with $\alpha_i^{\epsilon_i 1}(E) \geq E$. For this $\epsilon = (\epsilon_1, \ldots, \epsilon_n)$ we have
	\[ E = \bigcap_{\substack{\alpha \in \fl{H} \\ \alpha(E) \geq E}} \alpha(U) \leq U_{\epsilon}. \]
	If $E \neq U_{\epsilon}$, then, since both groups are $U$-eigenfactors of $\fl{H}$, there exists $\alpha \in \fl{H}$ such that $\alpha(E) \geq E$ and $\alpha(U_{\epsilon}) < U_{\epsilon}$. But then $E \leq \bigcap_{k \geq 0} \alpha^k(U_{\epsilon})$. 
	
	\begin{claim*}
		We have $\bigcap_{k \geq 0} \alpha^k(U_{\epsilon}) = U_{\fl{H}0}$.
	\end{claim*}
	
	As $U_{\fl{H}0} = \alpha^k(U_{\fl{H}0}) \leq \alpha^k(U_{\epsilon})$, one inclusion is clear. Thus it suffices to show that $\bigcap_{k \geq 0} \alpha^k(U_{\epsilon}) \subseteq \beta(U)$ for each $\beta \in \fl{H}$. If $\beta(U_{\epsilon}) \geq U_{\epsilon}$, then $U_{\epsilon} \leq \beta(U)$ holds by definition. Thus we can assume $\beta(U_{\epsilon}) < U_{\epsilon}$. Note that by the proof of the previous lemma we have $\alpha(U_{\epsilon}) = \beta_*^{-n}(U_{\epsilon})$ for some $n\in \Nb$ and $\beta(U_{\epsilon}) = \beta_*^{-n'}(U_{\epsilon})$ for some $n' \in \Nb$. In particular, $\beta^n(U_{\epsilon}) = \alpha^{n'}(U_{\epsilon})$. This, together with \cite[Proposition~$1.1.7$]{willis2025flatgroups}, implies
	\[ \bigcap_{k \geq 0} \alpha^k(U_{\epsilon}) \leq U_{\epsilon} \cap \alpha^{n'}(U_{\epsilon}) = U_{\epsilon} \cap \beta^n(U_{\epsilon}) \leq U \cap \beta^n(U) \leq \beta(U). \]
	
	As $U_{\fl{H}0} \leq E$ by definition ($E$ is a $U$-eigenfactor), we conclude that $E = U_{\epsilon}$ or $E = U_{\fl{H}0}$.
\end{proof}

\begin{remark}
	We note that the assumption in the previous lemma is satisfied if $\fl{H}$ is finitely generated (cf.\ \cite[Corollary~5.6]{Wi04}).
\end{remark}

\subsection{Computation of the set of roots $\Phi(\fl{H})$}

In this subsection we let $G$ be a \tdlc group and $\fl{H} \leq \Aut(G)$ be a flat group of automorphisms. Recall from \cite{willis2025flatgroups} that $\fl{H}$ is equipped with a set $\Phi(\fl{H})$ of \emph{roots}. In this section we will provide a strategy how to compute the set of roots $\Phi(\fl{H})$.

\begin{remark}\label{Remark: tidy subgroup of unreduced group}
	Let $\rho \in \Phi(\fl{H})$ be a root. Recall that $\rho: \fl{H} \to \Zb$ is a surjective homomorphism such that there exists a subgroup $H \leq G$, scaling for $\fl{H}$, and $s_{\rho} \in \Nb$, greater than $1$, such that $\Delta_H(\alpha \vert_H) = s_{\rho}^{\rho(\alpha)}$ for every $\alpha \in \fl{H}$, where $\Delta_H$ denotes the modular function of $H$. Let $U \leq H$ be compact and relatively open as in the definition of subgroups scaling for $\fl{H}$ (\cite[Definition~$2.2.1$]{willis2025flatgroups}). We note that in this case we have the following for each $\alpha \in \fl{H}$:
	\[ s_{\rho}^{\rho(\alpha)} = \Delta_H(\alpha \vert_H) = \begin{cases}
		[\alpha(U) : U] & \text{, if } \alpha(U) \geq U \\
		[U: \alpha(U)]^{-1} & \text{, if } \alpha(U) \leq U
	\end{cases} \]
	We note that $\rho(\alpha) \geq 0$ if and only if $\alpha(U) \geq U$, and $\rho(\alpha) \leq 0$ if and only if $\alpha(U) \leq U$. In particular, $\rho(\alpha)=0$ if and only if $\alpha(U) = U$.
	
	Now let $\alpha \in \fl{H}$ and let $V \in \COS(H)$ such that for each $\beta \in \fl{H}$ we have $\beta(V) \leq V$ or $\beta(V) \geq V$ (e.g.\ $V=U$). Then $V$ (and, in particular, $U$) is tidy for $\fl{H} \vert_H$ by \cite[Proposition~$1.1.5$]{willis2025flatgroups}. 
	\begin{enumerate}[label=(\alph*)]
		\item If $\rho(\alpha) >0$, then $\alpha(U) > U$ and $[\alpha(V) : \alpha(V) \cap V] = s(\alpha \vert_H) = [ \alpha(U) : U ] >1$ holds. As $\alpha(V) \cap V \in \{ \alpha(V), V \}$, we deduce $\alpha(V) > V$.
		
		\item If $\alpha(V) = V$, then $1 = [ \alpha^{\pm 1}(V) : \alpha^{\pm 1}(V) \cap V ] = s( \alpha^{\pm 1} \vert_H ) = [ \alpha^{\pm 1}(U) : \alpha^{\pm 1}(U) \cap U ]$. This implies $\alpha^{\pm 1}(U) \cap U = \alpha^{\pm 1}(U)$. We conclude $\alpha(U) = U$ and hence $\rho(\alpha) = 0$.
	\end{enumerate}
\end{remark}

\begin{remark}\label{Remark: flat group generated by kernel}
	Let $\rho \in \Phi(\fl{H})$ be a root. As $\rho: \fl{H} \to \mathbb{Z}$ is surjective, there exists $\alpha \in \fl{H}$ with $\rho(\alpha) = 1$. For $\beta \in \fl{H}$ we have $\rho(\beta) = \rho(\beta)\rho(\alpha) = \rho(\alpha^{\rho(\beta)})$ and hence $\alpha^{-\rho(\beta)} \beta \in \ker(\rho)$. This implies $\fl{H} = \langle \alpha, \ker(\rho) \rangle = \langle \alpha \rangle \ker(\rho)$.
\end{remark}

\begin{lemma}\label{Lemma:rootsofirreduciblefactorscoincide}
	Let $H_1 \leq H_2 \leq G$ be two subgroups scaling for $\fl{H}$ and let $\rho_1, \rho_2 \in \Phi(\fl{H})$ be the corresponding roots. Then $\rho_1 = \rho_2$.
\end{lemma}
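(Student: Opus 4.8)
The plan is to compare $\rho_1$ and $\rho_2$ through their kernels, exploiting the rigidity of $\Zb$ (a surjective endomorphism of $\Zb$ is an automorphism). Since, by \cite[Definition~$2.2.1$]{willis2025flatgroups}, a scaling subgroup is closed in $G$, the subgroup $H_1$ is closed in $H_2$. Fix a compact relatively open subgroup $U_2 \le H_2$ as in the definition of a subgroup scaling for $\fl{H}$, so that $\alpha(U_2) \le U_2$ or $\alpha(U_2) \ge U_2$ for every $\alpha \in \fl{H}$, and set $V := U_2 \cap H_1$. Then $V$ is a compact open subgroup of $H_1$ (open in $H_1$ since $U_2$ is open in $H_2$, and compact since $H_1$ is closed in $H_2$), and, since $H_1$ is $\fl{H}$-invariant, $\alpha(V) = \alpha(U_2) \cap H_1$ for every $\alpha \in \fl{H}$.

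Next I would compute $\Delta_{H_1}$ using $V$, recalling that the modular function of an automorphism does not depend on the choice of compact open subgroup. Let $\alpha \in \fl{H}$. If $\alpha(U_2) \ge U_2$ then $\alpha(V) \ge V$, whence $\Delta_{H_1}(\alpha\vert_{H_1}) = [\alpha(V):V] \ge 1$; symmetrically, $\alpha(U_2) \le U_2$ gives $\Delta_{H_1}(\alpha\vert_{H_1}) \le 1$; and $\alpha(U_2) = U_2$ gives $\alpha(V) = V$, hence $\Delta_{H_1}(\alpha\vert_{H_1}) = 1$. Since $\Delta_{H_1}(\alpha\vert_{H_1}) = s_{\rho_1}^{\rho_1(\alpha)}$ with $s_{\rho_1} > 1$, these read respectively $\rho_1(\alpha) \ge 0$, $\rho_1(\alpha) \le 0$, and $\rho_1(\alpha) = 0$. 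Combining this with Remark~\ref{Remark: tidy subgroup of unreduced group} applied to $H_2$ (with $U = U_2$), which gives $\rho_2(\alpha) = 0 \iff \alpha(U_2) = U_2$ together with the analogous sign equivalences, I conclude that $\rho_2(\alpha) = 0 \Rightarrow \alpha(U_2) = U_2 \Rightarrow \rho_1(\alpha) = 0$; that is, $\ker \rho_2 \subseteq \ker \rho_1$.

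Finally I would upgrade this one-sided inclusion to the full equality $\rho_1 = \rho_2$. As $\rho_1, \rho_2 \colon \fl{H} \to \Zb$ are surjective homomorphisms with $\ker \rho_2 \subseteq \ker \rho_1$, the induced map $\Zb \cong \fl{H}/\ker \rho_2 \to \fl{H}/\ker \rho_1 \cong \Zb$ is a surjective group homomorphism, hence an isomorphism; therefore $\ker \rho_1 = \ker \rho_2$ and $\rho_1 = \pm \rho_2$. To determine the sign, choose $\alpha \in \fl{H}$ with $\rho_2(\alpha) > 0$, which exists by surjectivity. By Remark~\ref{Remark: tidy subgroup of unreduced group} this means $\alpha(U_2) \ge U_2$, so the second step yields $\rho_1(\alpha) \ge 0$; moreover $\rho_1(\alpha) \neq 0$ since $\alpha \notin \ker \rho_1 = \ker \rho_2$. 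Hence $\rho_1(\alpha) > 0$, and together with $\rho_1 = \pm \rho_2$ this forces $\rho_1 = \rho_2$.

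The step I expect to require the most care is the second one: the definition of a subgroup scaling for $\fl{H}$ furnishes $H_1$ with its own compact relatively open subgroup $U_1$, which need not be comparable to $V = U_2 \cap H_1$, so one must deliberately compute $\Delta_{H_1}$ with $V$ (and invoke the independence of the modular function from the chosen compact open subgroup) rather than with $U_1$. The pleasant point that lets the argument close is that only the inclusion $\ker \rho_2 \subseteq \ker \rho_1$ is needed, because $\fl{H}/\ker \rho_i \cong \Zb$ admits no proper surjective self-endomorphism.
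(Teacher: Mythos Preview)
Your proof is correct and follows essentially the same route as the paper: both introduce $V = U_2 \cap H_1$, deduce $\ker\rho_2 \subseteq \ker\rho_1$ from the chain $\alpha(U_2)=U_2 \Rightarrow \alpha(V)=V \Rightarrow \rho_1(\alpha)=0$, and then exploit that both quotients are isomorphic to $\Zb$ together with a sign check. The differences are cosmetic: the paper appeals to Remark~\ref{Remark: tidy subgroup of unreduced group} for $V$ (after noting $\beta(V)\le V$ or $\beta(V)\ge V$ for all $\beta$), whereas you compute $\Delta_{H_1}$ with $V$ directly; and the paper writes out the endgame as an explicit computation $mn=1$ with generators $\alpha_1,\alpha_2$, whereas you package it as ``a surjective endomorphism of $\Zb$ is an automorphism, so $\rho_1=\pm\rho_2$''. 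Your formulation of the last step is a bit cleaner, but the two arguments are the same in substance.
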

\begin{proof}
	Let $U_i \leq H_i$ be compact and relatively open such that $H_i = \bigcup_{\alpha \in \fl{H}} \alpha(U_i)$ and for each $\alpha \in \fl{H}$ we have $\alpha(U_i) \leq U_i$ or $\alpha(U_i) \geq U_i$. Then $V := H_1 \cap U_2$ is compact and relatively open in $H_1$, and we have $\alpha(V) = \alpha(H_1 \cap U_2) = H_1 \cap \alpha(U_2)$. Thus we have $\alpha(V) \leq V$ or $\alpha(V) \geq V$ for each $\alpha \in \fl{H}$.
	
	\setcounter{claim}{0}
	\begin{claim}
		Let $\alpha \in \fl{H}$ with $\rho_1(\alpha)>0$. Then $\rho_2(\alpha) >0$.
	\end{claim}
	
	Let $\alpha \in \fl{H}$ with $\rho_1(\alpha)>0$. Then $\alpha(V) > V$ by Remark \ref{Remark: tidy subgroup of unreduced group} and hence $\alpha(U_2) \cap H_1 = \alpha(V) > V = U_2 \cap H_1$. This implies $\alpha(U_2) > U_2$ and hence $\rho_2(\alpha) >0$. 
	
	\begin{claim}
		$\ker(\rho_2) \subseteq \ker(\rho_1)$.
	\end{claim}
	
	Let $\alpha \in \ker(\rho_2)$. Then $\alpha(U_2) = U_2$ and hence $\alpha(V) = H_1 \cap \alpha(U_2) = H_1 \cap U_2 = V$. It follows from Remark \ref{Remark: tidy subgroup of unreduced group} that $\alpha \in \ker(\rho_1)$.
	
	We now prove the claim. Since $\rho_i$ is a root, there exists $\alpha_i \in \fl{H}$ with $\rho_i(\alpha_i) = 1$. Then $\rho_2(\alpha_1) >0$. By Remark \ref{Remark: flat group generated by kernel} there exist $m \in \Nb$ and $n \in \Zb$ such that $\alpha_1 \in \alpha_2^m \ker(\rho_2)$ and $\alpha_2 \in \alpha_1^n \ker(\rho_1)$. Thus
	\[ \alpha_1 \in \alpha_2^m \ker(\rho_2) \subseteq \alpha_1^{nm} \ker(\rho_1). \]
	It follows $\alpha_1^{1-mn} \in \ker(\rho_1)$ and hence $1-mn = \rho_1(\alpha_1^{1-mn}) = 0$. Thus $m=1=n$. For $\beta \in \fl{H}$ there exist $z\in \Zb$ and $k \in \ker(\rho_2) \subseteq \ker(\rho_1)$ with $\beta = \alpha_2^z k$. Now $\rho_1(\beta) = \rho_1(\alpha_2^z k) = \rho_1(\alpha_1^z) = z = \rho_2(\alpha_2^z k) = \rho_2(\beta)$ and hence $\rho_1 = \rho_2$.
\end{proof}

\begin{theorem}\label{Theorem: roots are associated with eigenfactors}
	Let $U$ be tidy for $\fl{H}$ and let $\rho \in \Phi(\fl{H})$. Suppose $\alpha_1, \ldots, \alpha_k \in \fl{H}$ such that the assumptions in Lemma \ref{Lemma: flat U decomposition into eigenfactors} hold, i.e.\ each $U_{\epsilon}$ satisfies that $\alpha(U_{\epsilon})$ contains or is contained in $U_{\epsilon}$ for all $\alpha \in \fl{H}$. Then there exists a $U$-eigenfactor $E$ of $\fl{H}$ such that $\rho$ is associated with $\bigcup_{\alpha \in \fl{H}} \alpha(E)$.
\end{theorem}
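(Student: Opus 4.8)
The plan is to descend from the scaling subgroup attached to $\rho$ to a smaller scaling subgroup that is cut out of the tidy group $U$, to recognise this smaller subgroup as $\bigcup_{\alpha\in\fl{H}}\alpha(E)$ for a suitable $U$-eigenfactor $E$, and to transport the root along the resulting inclusions by means of Lemma~\ref{Lemma:rootsofirreduciblefactorscoincide}. First I would unpack the hypothesis that $\rho\in\Phi(\fl{H})$: there is a subgroup $H\leq G$ scaling for $\fl{H}$ with associated root $\rho$, together with a compact relatively open $U'\leq H$ such that $H=\bigcup_{\alpha\in\fl{H}}\alpha(U')$ and $\alpha(U')$ contains or is contained in $U'$ for every $\alpha\in\fl{H}$. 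By Remark~\ref{Remark: tidy subgroup of unreduced group} this witness detects signs, $\rho(\alpha)>0\iff\alpha(U')>U'$ and $\rho(\alpha)=0\iff\alpha(U')=U'$, and by Remark~\ref{Remark: flat group generated by kernel} there is $\alpha_0\in\fl{H}$ with $\rho(\alpha_0)=1$, $\fl{H}=\langle\alpha_0\rangle\ker(\rho)$, and $\beta(U')=U'$ for all $\beta\in\ker(\rho)$.

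Next I would pass to $V:=H\cap U$, which is compact and relatively open in $H$. Since $U$ is tidy for $\fl{H}$ and $H$ is closed and $\fl{H}$-invariant, $V$ is tidy for $\fl{H}\vert_H$ by the restriction property of tidy subgroups, and a direct check shows that the witness $U'$ is itself tidy for each $\alpha\vert_H$. Comparing scales, $s(\alpha\vert_H)=[\alpha(U'):\alpha(U')\cap U']$ equals $1$ when $\rho(\alpha)\leq0$ and equals $[\alpha(U'):U']>1$ when $\rho(\alpha)>0$, whereas the same scale equals $[\alpha(V):\alpha(V)\cap V]$; it follows that $\alpha(V)\subseteq V$ when $\rho(\alpha)\leq0$ and $\alpha(V)\supsetneq V$ when $\rho(\alpha)>0$. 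In particular $\alpha(V)$ contains or is contained in $V$ for every $\alpha$, $\beta(V)=V$ for $\beta\in\ker(\rho)$, and $[\alpha_0(V):V]>1$. Hence $H_1:=\bigcup_{\alpha\in\fl{H}}\alpha(V)=\bigcup_{m\in\Nb}\alpha_0^m(V)$ is an $\fl{H}$-invariant subgroup that is open in $H$ (hence closed in $G$), is non-compact because $[\alpha_0^m(V):V]=[\alpha_0(V):V]^m\to\infty$, and admits $V$ as a witness; so $H_1$ is scaling for $\fl{H}$, and since $H_1\leq H$, Lemma~\ref{Lemma:rootsofirreduciblefactorscoincide} shows that the root attached to $H_1$ is again $\rho$.

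Then I would locate the eigenfactor inside $U$. Setting $\mathbf{a}:=\{\alpha\in\fl{H}\mid\rho(\alpha)\geq0\}$ one has $U_{\mathbf{a}}=\bigcap_{\rho(\beta)\geq0}\beta(U)=:E'$, and the identity $\alpha(E')=\bigcap_{\rho(\gamma)\geq\rho(\alpha)}\gamma(U)$ shows that $\alpha(E')\geq E'$ when $\rho(\alpha)\geq0$ and $\alpha(E')\leq E'$ when $\rho(\alpha)\leq0$; hence $E'$ is a $U$-eigenfactor of $\fl{H}$ by Lemma~\ref{Lemma: U-eigenfactors}. Since $\alpha^{-1}(V)\subseteq V\subseteq U$ whenever $\rho(\alpha)\geq0$, we obtain $V\subseteq E'$, so $E'\neq U_{\fl{H}0}$ (otherwise $H_1=\bigcup_m\alpha_0^m(V)\subseteq\bigcup_m\alpha_0^m(U_{\fl{H}0})=U_{\fl{H}0}$ would be compact). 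By Lemma~\ref{Lemma: flat U decomposition into eigenfactors} we then have $E'=U_{\epsilon}$ for some $\epsilon\in\{-,+\}^n$, and putting $E:=E'=U_{\epsilon}=U_{\{\alpha_1^{\epsilon_1},\dots,\alpha_n^{\epsilon_n}\}}$ (with $\alpha_i^{+}:=\alpha_i$, $\alpha_i^{-}:=\alpha_i^{-1}$) gives a $U$-eigenfactor distinct from $U_{\fl{H}0}$, so that $\widehat{E}=\bigcup_{\alpha\in\fl{H}}\alpha(E)$ is scaling for $\fl{H}$ by Lemma~\ref{Lemma: eigenfactor yields unreduced subgroup}. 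Finally $V\subseteq E$ yields $H_1=\bigcup_{\alpha}\alpha(V)\subseteq\bigcup_{\alpha}\alpha(E)=\widehat{E}$, so Lemma~\ref{Lemma:rootsofirreduciblefactorscoincide} applied to $H_1\leq\widehat{E}$ gives that the root attached to $\widehat{E}$ equals that of $H_1$, namely $\rho$; this $E$ is the required $U$-eigenfactor.

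The step I expect to be the main obstacle is the passage to $V=H\cap U$ in the second paragraph: it relies on the fact that tidy subgroups restrict to closed invariant subgroups and on the scale bookkeeping $s(\alpha\vert_H)=1$ for $\rho(\alpha)\leq0$, which itself rests on first checking that the witness $U'$ of the scaling subgroup $H$ is tidy for each $\alpha\vert_H$. Once this is in place, the remainder is a direct assembly of Lemmas~\ref{Lemma: U-eigenfactors}, \ref{Lemma: flat U decomposition into eigenfactors}, \ref{Lemma: eigenfactor yields unreduced subgroup} and \ref{Lemma:rootsofirreduciblefactorscoincide}.
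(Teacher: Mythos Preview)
Your overall architecture is the same as the paper's: exhibit two nested scaling subgroups for $\fl{H}$, the larger one being $\widehat{E}=\bigcup_{\alpha}\alpha(E)$ for a suitable $U$-eigenfactor, and transfer the root along the inclusion via Lemma~\ref{Lemma:rootsofirreduciblefactorscoincide}. Your candidate $E'=\bigcap_{\rho(\beta)\ge 0}\beta(U)$ is exactly the subgroup the paper calls $U_\rho$ (see \cite[Proposition~2.2.5]{willis2025flatgroups}), so you are working with the right object.

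The difficulty you flag in your last paragraph is real, and it is precisely where your argument deviates from the paper. The assertion that $V=H\cap U$ is tidy for $\fl{H}\vert_H$ ``by the restriction property of tidy subgroups'' is not a result available in the references; tidiness does not in general pass to intersections with closed invariant subgroups, and nothing in the set-up forces $\alpha(U)$ and $U$ to be nested, so you cannot conclude $\alpha(V)=H\cap\alpha(U)$ is nested with $V$ in the way the proof of Lemma~\ref{Lemma:rootsofirreduciblefactorscoincide} does. Without this, the scale comparison $s(\alpha\vert_H)=[\alpha(V):\alpha(V)\cap V]$ is only an inequality, and the chain $\alpha(V)\subseteq V$ for $\rho(\alpha)\le 0$, hence $V\subseteq E'$, hence $H_1\le\widehat{E}$, collapses.

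The paper bypasses the whole $H,V,H_1$ detour. It invokes \cite[Proposition~2.2.5]{willis2025flatgroups} directly to know both that $U_{\fl{H}0}<U_\rho$ and that $\widehat{U}_\rho$ is already a scaling subgroup for $\fl{H}$ with associated root $\rho$. Then it chooses $E$ explicitly: pick signs $\epsilon_i$ so that $\rho(\alpha_i^{\epsilon_i})\ge 0$ and set $E=U_{\{\alpha_1^{\epsilon_1},\dots,\alpha_k^{\epsilon_k}\}}$. The containment $U_\rho\le E$ is then immediate (each factor $U_{\alpha_i^{\epsilon_i}+}$ is an intersection over a subfamily of $\{\beta(U):\rho(\beta)\ge 0\}$), whence $\widehat{U}_\rho\le\widehat{E}$, and Lemma~\ref{Lemma:rootsofirreduciblefactorscoincide} finishes. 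In particular the paper never needs to identify $U_\rho$ with some $U_\epsilon$ via Lemma~\ref{Lemma: flat U decomposition into eigenfactors}; it only needs $U_\rho$ to sit inside one, which is a one-line inclusion. Replacing your second paragraph with this direct appeal to \cite[Proposition~2.2.5]{willis2025flatgroups} removes the gap and shortens the proof considerably.
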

\begin{proof}
	Note that $U_{\rho} = \bigcap \{ \alpha(U) \mid \alpha \in \fl{H}, \, \rho(\alpha) \geq 0 \}$ as in \cite[Proposition~$2.2.5$]{willis2025flatgroups}. Note that $U_{\fl{H}0} < U_{\rho}$, as $\rho$ is a root. If $\rho(\alpha_i) \geq 0$ we have $U_{\rho} \leq U_{\alpha_i +}$; otherwise, we have $U_{\rho} \leq U_{\alpha_i^{-1} +}$. Let $E = U_{\{ \alpha_1^{\epsilon_1}, \ldots, \alpha_k^{\epsilon_k} \}}$ be such that each of the automorphisms $\alpha_i^{\epsilon_i}$ satisfies $\rho(\alpha_i^{\epsilon_i}) \geq 0$. Then $U_{\rho} \leq E$. By definition, we have $\widehat{U}_{\rho} \leq \bigcup_{\alpha \in \fl{H}} \alpha(E)$, and both subgroups are scaling for $\fl{H}$ (cf.\ Lemma~\ref{Lemma: eigenfactor yields unreduced subgroup} and \cite[Proposition~$2.2.5$]{willis2025flatgroups}). Now the two roots associated with these scaling subgroups coincide by Lemma \ref{Lemma:rootsofirreduciblefactorscoincide} and the claim follows.
\end{proof}

\subsection{Topology and locally finite buildings}

Recall from Convention~\ref{Convention: (W,S) Coxeter system of finite rank} that $(W, S)$ is a Coxeter system of finite rank. Let $\Delta = (\mc{C}, \delta)$ be a thick locally finite building of type $(W, S)$. Then the group $\Aut(\Delta)$ equipped with the permutation topology turns it into a \tdlc group. Recall that the family of pointwise stabilizers of finitely many chambers forms a neighbourhood basis of the identity. In particular, pointwise stabilizers of finitely many chambers are compact and open.

\begin{lemma}[Exercise~(\ref{Exercise: compact open subgroup})]\label{Lemma: compact open subgroup}
	Let $G \leq \Aut(\Delta)$ be closed and $c\in \mc{C}$. Then we have $G_c \cap \Aut_0(\Delta) \in \COS(G)$.
\end{lemma}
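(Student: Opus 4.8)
The plan is to reduce the statement to two facts about the ambient group $\Aut(\Delta)$: first, that the pointwise stabilizer $\Aut(\Delta)_c$ of the single chamber $c$ is a compact open subgroup of $\Aut(\Delta)$ --- this is exactly the content of the remarks preceding the lemma, since $\{c\}$ is finite; and second, that $\Aut_0(\Delta)$ is a clopen subgroup of $\Aut(\Delta)$. Granting both, one finishes quickly: the set $K := G \cap \Aut(\Delta)_c \cap \Aut_0(\Delta) = G_c \cap \Aut_0(\Delta)$ is a subgroup of $G$ as an intersection of subgroups; it is open in $G$, being the intersection of $G$ with the open set $\Aut(\Delta)_c \cap \Aut_0(\Delta)$ of $\Aut(\Delta)$; and since $G$ is closed in $\Aut(\Delta)$ and both $\Aut(\Delta)_c$ and $\Aut_0(\Delta)$ are closed, $K$ is closed in $\Aut(\Delta)$, hence a closed subset of the compact group $\Aut(\Delta)_c$, hence compact. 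Therefore $K \in \COS(G)$, which is the claim.

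So the actual work is in showing that $\Aut_0(\Delta)$ is open (it is then automatically closed, being an open subgroup). For this I would exhibit an open subgroup of $\Aut(\Delta)$ contained in $\Aut_0(\Delta)$. Applying axiom (Bu3) with $x=y=c$, for each $s\in S$ choose a chamber $d_s$ with $\delta(c, d_s) = s$. Since $(W,S)$ has finite rank (Convention~\ref{Convention: (W,S) Coxeter system of finite rank}), the set $F := \{c\} \cup \{ d_s \mid s \in S\}$ is finite, so $V := \Aut(\Delta)_F$, the pointwise stabilizer of $F$, is a compact open subgroup of $\Aut(\Delta)$. If $\phi \in V$, write $\phi$ as a $\sigma$-isomorphism for some bijection $\sigma\colon S \to S$ with $m_{\sigma(s)\sigma(t)} = m_{st}$; then for each $s \in S$ we have $s = \delta(c, d_s) = \delta(\phi(c), \phi(d_s)) = \sigma(s)$, so $\sigma = \id$ and $\phi$ is special. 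Hence $V \subseteq \Aut_0(\Delta)$, so $\Aut_0(\Delta)$ contains a neighbourhood of the identity and is therefore open.

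I expect the only genuine point to be this last step --- producing the explicit open subgroup inside $\Aut_0(\Delta)$ and checking that fixing $c$ together with one $s$-neighbour for every $s$ forces the associated type-permutation to be trivial; finiteness of $S$ is what makes $F$ finite and hence $V$ open. The remaining assertions (that $\Aut(\Delta)_c$ is closed, that a closed subset of a compact set is compact, that the relevant intersections of clopen subgroups behave as claimed, and that the hypothesis ``$G$ closed'' is exactly what guarantees $K$ is closed, hence compact) are routine point-set topology given the facts recalled in the text.
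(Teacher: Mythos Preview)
Your argument is correct. The paper does not supply a proof of this lemma; it is deferred to the exercises (Section~\ref{sec:Bischof_exercises}, Exercise~(\ref{Exercise: compact open subgroup})), so there is no ``paper's own proof'' to compare against. Your solution is exactly the kind of argument one would expect the reader to produce: use the neighbourhood basis description recalled just before the lemma to see that $\Aut(\Delta)_c$ is compact open, exhibit a finite set $F$ (one $s$-neighbour of $c$ for each $s\in S$, using finiteness of $S$ from Convention~\ref{Convention: (W,S) Coxeter system of finite rank}) whose pointwise stabilizer forces the type-permutation to be trivial, conclude that $\Aut_0(\Delta)$ is open, and then finish by elementary point-set topology using that $G$ is closed. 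One cosmetic remark: in your chain $s=\delta(c,d_s)=\delta(\phi(c),\phi(d_s))=\sigma(s)$ the middle equality uses $\phi(c)=c$, $\phi(d_s)=d_s$, and the last uses the defining property of a $\sigma$-isomorphism applied to $\delta(c,d_s)=s$; it might read more transparently to separate these two facts rather than run them as a single chain.
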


\begin{remark}
	Let $\Delta = (\mc{C}, \delta)$ be a building of type $(W, S)$. For two galleries $(c_0, \ldots, c_k)$ and $(d_0, \ldots, d_m)$ with $c_k = d_0$, we define the \emph{concatenation} of these galleries by the gallery $(c_0, \ldots, c_k = d_0, \ldots, d_m)$.
\end{remark}

\begin{proposition}\label{Prop:tidystabilizer}
	Let $G \leq \Aut(\Delta)$ be closed and Weyl-transitive. Let $g\in G$, $c\in \mc{C}$ and let $(c_0 := c, \ldots, c_k = g(c))$ be a minimal gallery. Assume that for all $n\in \Nb$ the concatenation of the minimal galleries $(g^i(c_0), \ldots, g^i(c_k))$, $0 \leq i \leq n$, is a minimal gallery. Then $G_c \cap \Aut_0(\Delta)$ is tidy for $g$.
\end{proposition}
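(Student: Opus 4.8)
The plan is to apply the criterion of Lemma~\ref{Lemma:Cor3.5Möller} to the subgroup $U := G_c \cap \Aut_0(\Delta)$, which lies in $\COS(G)$ by Lemma~\ref{Lemma: compact open subgroup}. So it suffices to show that
\[ [U : U \cap g^n U g^{-n}] = [U : U \cap g U g^{-1}]^n \quad \text{for all } n \in \Nb. \]
First I would rewrite both sides as orbit sizes. Since $\Aut_0(\Delta)$ is a normal subgroup of $\Aut(\Delta)$ (it is the kernel of the homomorphism assigning to an automorphism its type permutation) and $g^n G_c g^{-n} = G_{g^n(c)}$, one gets $g^n U g^{-n} = G_{g^n(c)} \cap \Aut_0(\Delta)$, so that $U \cap g^n U g^{-n}$ is exactly the stabiliser of $g^n(c)$ in $U$. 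By the orbit--stabiliser relation, $[U : U \cap g^n U g^{-n}] = \vert U \cdot g^n(c) \vert$.

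Next I would bring in Weyl-transitivity. Since $U = (G \cap \Aut_0(\Delta))_c$ and $G$ is Weyl-transitive, the group $U$ acts transitively on $C_w(c) := \{ d \in \mc{C} \mid \delta(c,d) = w \}$ for every $w \in W$; in particular $\vert U \cdot g^n(c) \vert = \vert C_{w_n}(c) \vert$, where $w_n := \delta(c, g^n(c))$. Let $\sigma$ be the type permutation induced by $g$, so that $\delta(g^i(x), g^i(y)) = \sigma^i(\delta(x,y))$ for all $i$ and all $x,y \in \mc{C}$. The hypothesis says that the concatenation of the minimal galleries $(g^i(c_0), \ldots, g^i(c_k))$, $0 \le i \le n-1$, is a minimal gallery from $c$ to $g^n(c)$; reading off its adjacency types (the $i$-th block has the type of the original minimal gallery with $\sigma^i$ applied) shows that $w_n = w_1\,\sigma(w_1)\cdots\sigma^{n-1}(w_1)$ where $w_1 = \delta(c, g(c))$, and that this product is length-additive, i.e.\ $\ell(w_n) = \sum_{i=0}^{n-1}\ell(\sigma^i(w_1)) = nk$ (using that $\sigma$ preserves length). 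Now observe that $\Delta$ is regular: two $s$-panels have the same size because an element of $G \cap \Aut_0(\Delta)$ carrying an $s$-adjacent pair of one onto an $s$-adjacent pair of the other maps the first panel onto the second. Writing $q_s := \vert P\vert - 1$ for an $s$-panel $P$, the standard count for regular buildings gives $\vert C_w(c) \vert = q_w := q_{s_1}\cdots q_{s_{\ell(w)}}$ for any reduced word $w = s_1\cdots s_{\ell(w)}$, independently of $c$ and of the expression; equivalently, $\vert C_{vw}(c)\vert = \vert C_v(c)\vert\cdot\vert C_w(c)\vert$ whenever $\ell(vw) = \ell(v)+\ell(w)$. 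Finally, $g^i$ maps $C_{w_1}(c)$ bijectively onto $C_{\sigma^i(w_1)}(g^i(c))$, and basepoint-independence gives $\vert C_{\sigma^i(w_1)}(c)\vert = \vert C_{w_1}(c)\vert$. Combining,
\[ [U : U \cap g^n U g^{-n}] = \vert C_{w_n}(c)\vert = \prod_{i=0}^{n-1}\vert C_{\sigma^i(w_1)}(c)\vert = \vert C_{w_1}(c)\vert^n = [U : U \cap gUg^{-1}]^n, \]
and Lemma~\ref{Lemma:Cor3.5Möller} shows that $U$ is tidy for $g$.

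The group-theoretic reductions (M\"oller's criterion, normality of $\Aut_0(\Delta)$, orbit--stabiliser) and the passage to a regular building are routine. The step I expect to require the most care is extracting from the ``concatenation is a minimal gallery'' hypothesis both the value $w_n = w_1\,\sigma(w_1)\cdots\sigma^{n-1}(w_1)$ and a reduced expression for it — that is, correctly propagating the type permutation $\sigma$ of the (possibly non-special) automorphism $g$ through its iterates — and then turning this into the multiplicativity $\vert C_{w_n}(c)\vert = \vert C_{w_1}(c)\vert^n$ of the chamber counts.
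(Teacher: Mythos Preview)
Your proposal is correct and follows essentially the same approach as the paper: reduce via M\"oller's criterion (Lemma~\ref{Lemma:Cor3.5Möller}), identify $g^nUg^{-n}$ with the stabiliser of $g^n(c)$ in $\Aut_0(\Delta)$ using normality, observe that $\Delta$ is regular via Weyl-transitivity, read off the type $w_n = w_1\,\sigma(w_1)\cdots\sigma^{n-1}(w_1)$ from the concatenated minimal gallery, and use $q_{\sigma(s)}=q_s$ to obtain the multiplicativity of the index. The only packaging difference is that the paper telescopes the index along the gallery and proves each step equals $q_{s_i}$ (using Lemma~\ref{Lemma:gfixesXimpliesgfixesconv(X)} and the projection argument), whereas you invoke the orbit--stabiliser theorem and the standard chamber-count formula $\vert C_w(c)\vert = q_{s_1}\cdots q_{s_{\ell(w)}}$ directly; these are the same computation in slightly different dress.
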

\begin{proof}
	For a chamber $d\in \mc{C}$ we abbreviate $G_d^0 := G_d \cap \Aut_0(\Delta)$. By Lemma~\ref{Lemma: compact open subgroup} we have $G_c^0 \in \COS(G)$. Lemma \ref{Lemma:Cor3.5Möller} implies that $G_c^0$ is tidy for $g$ if and only if
	\[ \left[ G_c^0: G_c^0 \cap G_{g^n(c)}^0 \right] = \left[ G_c^0: G_c^0 \cap g^n G_c^0 g^{-n} \right] \overset{!}{=} \left[ G_c^0: G_c^0 \cap g G_c^0 g^{-1} \right]^n = \left[ G_c^0: G_c^0 \cap G_{g(c)}^0 \right]^n \]
	holds for all $n\in \Nb$. Since $g$ need not necessarily be a special automorphism, we cannot apply results from \cite{BPR19}, but we will use techniques from there. We note that $G\cap \Aut_0(\Delta)$ acts transitive on $\mc{C}$, as $G$ is Weyl-transitive. Hence, $\Delta$ is regular. By hypothesis, for all $n \in \Nb$, the concatenation of the minimal galleries $(g^i(c_0), \ldots, g^i(c_k))$, $0 \leq i \leq n$, is a minimal gallery. Let $n\in \Nb$. We denote the minimal gallery from $c$ to $g^{n-1}(c_k) = g^n(c)$ obtained in this way by $(d_0, \ldots, d_{kn})$. Suppose that $g$ is a $\sigma$-isomorphism for some bijection $\sigma:S \to S$ satisfying $m_{\sigma(s) \sigma(t)} = m_{st}$ for all $s, t \in S$. Then $q_s = q_{\sigma(s)}$, and we obtain the following, where the first equation follows from Lemma \ref{Lemma:gfixesXimpliesgfixesconv(X)} together with the facts that $G_c^0$ is special and that any two chambers are contained in a common apartment:
	\begin{align*}
		\left[ G_c^0: G_c^0 \cap G_{g^n(c)}^0 \right] = \left[ G_{d_0}^0: \bigcap\limits_{i=0}^{kn} G_{d_i}^0 \right] = \prod_{i=1}^{kn} \left[ \bigcap\limits_{j=0}^{i-1} G_{d_j}^0: \bigcap\limits_{j=0}^i G_{d_j}^0 \right]
	\end{align*}
	Let $P_i$ be the panel containing $d_{i-1}$ and $d_i$ and let $s_i$ be the type of $P_i$. Then $d_{i-1} = \proj_{P_i} d_0$ (cf.\ Exercise~(\ref{Exercise: projection})). Since $G_c^0$ is special, we have $\left[ \bigcap_{j=0}^{i-1} G_{d_j}^0: \bigcap_{j=0}^i G_{d_j}^0 \right] \leq q_{s_i}$. Let $p \in P_i \setminus \{d_{i-1}\}$. As $\delta(d_0, d_i) = \delta(d_0, d_{i-1}) s_i = \delta(d_0, p)$ and since $G$ is Weyl-transitive, there exists $h\in G \cap \Aut_0(\Delta)$ with $h(d_0) = d_0$ and $h(d_i) = p$. Lemma \ref{Lemma: projection and isometry commute} implies $h(d_{i-1}) = h(\proj_{P_i} d_0) = \proj_{h(P_i)} h(d_0) = \proj_{P_i} d_0 = d_{i-1}$. Now Lemma \ref{Lemma:gfixesXimpliesgfixesconv(X)} yields $h\in \bigcap_{j=0}^{i-1} G_{d_j}^0$. In particular, $\left[ \bigcap_{j=0}^{i-1} G_{d_j}^0: \bigcap_{j=0}^i G_{d_j}^0 \right] = q_{s_i}$. Note that $s_i = \sigma^j(s_i) = s_{kj +i}$ for all $1 \leq i \leq k$ and $0 \leq j \leq n-1$. We infer
	\allowdisplaybreaks
	\begin{align*}
		\prod_{i=1}^{kn} \left[ \bigcap_{j=0}^{i-1} G_{d_j}^0: \bigcap_{j=0}^i G_{d_j}^0 \right] &= \prod_{i=1}^{kn} q_{s_i} \\
		&= \prod_{j=0}^{n-1} q_{\sigma^j(s_1)} \cdots q_{\sigma^j(s_k)} \\
		&= \left( q_{s_1} \cdots q_{s_k} \right)^n \\
		&= \left[ G_c^0: G_c^0 \cap G_{g(c)}^0 \right]^n.
	\end{align*}
	This finishes the claim.
\end{proof}

\begin{corollary}\label{Cor:tidysubgroupfortranslation}
	Let $G \leq \Aut(\Delta)$ be closed and Weyl-transitive, let $\Sigma \subseteq \mc{C}$ be an apartment, let $c\in \Sigma$ and let $t\in \Stab_G(\Sigma)$ be such that $t \vert_{\Sigma} \in \Aut(\Sigma)$ is a translation. Then $G_c \cap \Aut_0(\Delta)$ is tidy for $t$. 
\end{corollary}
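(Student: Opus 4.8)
The plan is to deduce the statement directly from Proposition~\ref{Prop:tidystabilizer}, applied to the element $g = t$. Since $G \leq \Aut(\Delta)$ is closed and Weyl-transitive and $t \in \Stab_G(\Sigma) \leq G$, the only thing that needs checking is the hypothesis of that proposition: that for a suitable minimal gallery $(c_0 := c, \ldots, c_k = t(c))$ and every $n \in \Nb$, the concatenation of the minimal galleries $(t^i(c_0), \ldots, t^i(c_k))$, $0 \leq i \leq n$, is again minimal. Everything else in the conclusion ($G_c \cap \Aut_0(\Delta)$ being tidy for $t$) is then immediate.

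First I would fix the gallery. Because $t$ stabilizes $\Sigma$ and $c \in \Sigma$, we have $t(c) \in \Sigma$; choose any minimal gallery $(c_0 := c, \ldots, c_k = t(c))$ in $\mc{C}$. By convexity of the apartment $\Sigma$ this gallery lies entirely in $\Sigma$, and since $t$ stabilizes $\Sigma$, so does $t^i(c_j)$ for all $i$ and $j$; thus all galleries occurring below live inside $\Sigma$.

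The heart of the matter is the minimality of the concatenation, and here I would transfer the situation to the Coxeter complex $\Sigma(W, S)$ via the special isomorphism $\sigma_c \colon \Sigma \to W$, $x \mapsto \delta(c, x)$, which sends $c$ to $1_W$. Set $\bar t := \sigma_c \circ (t\vert_{\Sigma}) \circ \sigma_c^{-1} \in \Aut(\Sigma(W, S))$. As an $\id$-isomorphism between apartments of type $(W, S)$, $\sigma_c$ induces a bijection from the roots of $\Sigma$ onto the roots of $\Sigma(W, S)$ that respects inclusion, hence parallelism; consequently $\bar t$ is a translation of $\Sigma(W, S)$. Moreover $(\sigma_c(c_0), \ldots, \sigma_c(c_k))$ is a minimal gallery from $1_W$ to $\bar t(1_W)$, so Lemma~\ref{Lemma:translationinducesminimalgallery} yields that, for each $n$, the concatenation of the galleries $(\bar t^{\,i}(\sigma_c(c_0)), \ldots, \bar t^{\,i}(\sigma_c(c_k)))$, $0 \leq i \leq n$, is minimal. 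Applying the special isomorphism $\sigma_c^{-1}$, which preserves the distance function and hence the length and minimality of galleries, and using $\sigma_c^{-1}(\bar t^{\,i}(\sigma_c(c_j))) = t^i(c_j)$, the concatenation of the galleries $(t^i(c_0), \ldots, t^i(c_k))$, $0 \leq i \leq n$, is minimal as well. This is precisely the hypothesis of Proposition~\ref{Prop:tidystabilizer}, which therefore gives that $G_c \cap \Aut_0(\Delta)$ is tidy for $t$.

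The only genuinely delicate point — and it is a routine one — is verifying that conjugation by $\sigma_c$ carries the building-theoretic notion of a translation of $\Sigma$ to a translation of $\Sigma(W, S)$; once this is observed, the argument is a direct reduction to Lemma~\ref{Lemma:translationinducesminimalgallery} followed by an application of Proposition~\ref{Prop:tidystabilizer}. Alternatively, one may simply note that the proof of Lemma~\ref{Lemma:translationinducesminimalgallery} goes through verbatim with an arbitrary apartment $\Sigma$ in place of $\Sigma(W, S)$, which sidesteps the transfer entirely.
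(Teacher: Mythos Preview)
Your proposal is correct and follows exactly the approach the paper takes: the paper's proof is the single sentence ``This is a consequence of Lemma~\ref{Lemma:translationinducesminimalgallery} and Proposition~\ref{Prop:tidystabilizer},'' and you have spelled out precisely this reduction, including the (routine) transfer via $\sigma_c$ needed to apply Lemma~\ref{Lemma:translationinducesminimalgallery}, which is stated only for $\Sigma(W,S)$.
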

\begin{proof}
	This is a consequence of Lemma \ref{Lemma:translationinducesminimalgallery} and Proposition \ref{Prop:tidystabilizer}.
\end{proof}

\begin{remark}
	The tidy subgroup $G_c \cap \mathrm{Aut}_0(\Delta)$ looks somehow mysterious. But it generalizes the case of regular trees which are buildings of type $\tilde{A}_1$. For a translation $t$ along an axis in a tree, the pointwise fixator of an edge on this axis is tidy for $t$ (cf.\ Exercise~$(9)$ in \cite[Section~$3.1$]{willis2025flatgroups}). The edge corresponds to a chamber in the building and the vertices correspond to panels.
\end{remark}

\section{Affine buildings}

\subsection{Sectors}\label{Subsection: Sectors}

This section is based on \cite{We09}.

Let $(W, S)$ be affine of type $\tilde{X}_n$ and let $\Sigma := \Sigma(W, S)$. A vertex of $\tilde{X}_n$ is called \emph{special} if its deletion yields the diagram $X_n$. A \emph{gem} of $\Sigma$ is a residue of type $S \backslash \{o\}$, where $o$ is a special vertex of $\tilde{X}_n$. Let $R$ be a gem and let $c \in R$. We denote by $[R, c]$ the set of all roots $\alpha \in \Phi$ containing $c$ but not some chamber of $R$ adjacent to $c$. The \emph{$(R, c)$-sector} is given by the set of chambers $\sigma(R, c) :=\bigcap_{\alpha \in [R, c]} \alpha$. A \emph{sector} is an $(R, c)$-sector for some gem $R$ and $c\in R$. We relax notation and identify $\Sigma$ with the set of chambers $W$ in the Coxeter building $\Sigma(W, S)$.

\begin{lemma}[{\cite[Proposition~4.3]{We09}}]\label{Lemma:Prop4.3}
	Let $R$ be a gem and $c\in R$. Then $\sigma(R, c) = \{ d \in \Sigma \mid \proj_R d = c \}$.
\end{lemma}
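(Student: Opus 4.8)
The plan is to prove the two inclusions separately, using only the gate property of the projection $\proj_R$ and the characterisation of minimal galleries via wall crossings (Lemma~\ref{Lemma: minimal gallery crossing roots}, together with the fact from its proof / \cite[Lemma~3.69]{AB08} that a minimal gallery crosses precisely the walls separating its endpoints).

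\emph{Step 1: $\{d \in \Sigma \mid \proj_R d = c\} \subseteq \sigma(R,c)$.} Fix $d$ with $\proj_R d = c$ and an arbitrary root $\alpha \in [R,c]$; I must show $d \in \alpha$. By definition of $[R,c]$ there is a chamber $e \in R$ adjacent to $c$ with $c \in \alpha$ and $e \notin \alpha$, so $\partial\alpha$ is the unique wall separating $e$ and $c$, namely the wall of the panel $\{e,c\}$. Assume for contradiction that $d \notin \alpha$. Then $d$ and $e$ both lie in $-\alpha$, so no minimal gallery from $d$ to $e$ crosses $\partial\alpha$ (\cite[Lemma~3.69]{AB08}); appending the single step $e \to c$, which does cross $\partial\alpha$, produces a gallery from $d$ to $c$ that crosses every wall at most once, hence is minimal, and has length $\ell(\delta(d,e)) + 1$. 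But the gate property of $\proj_R d = c$ gives $\ell(\delta(d,e)) = \ell(\delta(d,c)) + \ell(\delta(c,e)) = \ell(\delta(d,c)) + 1$, so this minimal gallery has length $\ell(\delta(d,c)) + 2$ --- a contradiction. Hence $d \in \alpha$, and as $\alpha \in [R,c]$ was arbitrary, $d \in \sigma(R,c)$.

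\emph{Step 2: $\sigma(R,c) \subseteq \{d \in \Sigma \mid \proj_R d = c\}$.} First record two facts. (i) $\sigma(R,c)$ is convex, being an intersection of roots, which are convex. (ii) $\sigma(R,c) \cap R = \{c\}$: indeed $c \in \sigma(R,c)$ since every $\alpha \in [R,c]$ contains $c$; conversely, if $e \in R \setminus \{c\}$, pick a minimal gallery $(c = f_0, \ldots, f_m = e)$ inside $R$ (possible since $R$ is a building) and let $\gamma$ be the root containing $f_0 = c$ but not $f_1$. Then $\gamma \in [R,c]$ because $f_1 \in R$ is adjacent to $c$, and by Lemma~\ref{Lemma: minimal gallery crossing roots} the minimal gallery crosses $\partial\gamma$ only at its first step, so $f_1, \ldots, f_m \in -\gamma$; in particular $e \notin \gamma$, hence $e \notin \sigma(R,c)$. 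Now let $d \in \sigma(R,c)$ and put $c' := \proj_R d$. The gate property gives $\ell(\delta(d,c)) = \ell(\delta(d,c')) + \ell(\delta(c',c))$, so concatenating a minimal gallery from $d$ to $c'$ with a minimal gallery from $c'$ to $c$ yields a minimal gallery from $d$ to $c$ passing through $c'$. Since $d, c \in \sigma(R,c)$ and $\sigma(R,c)$ is convex, $c' \in \sigma(R,c)$; as also $c' \in R$, fact (ii) forces $c' = c$, i.e.\ $\proj_R d = c$.

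The argument is essentially bookkeeping with walls, and the only place requiring genuine care is Step 2: the tempting direct approach --- tracking the first step of a minimal gallery from $\proj_R d$ back to $c$ --- fails once $R$ has rank at least $2$, because the wall crossed at that step need not be one of the walls defining $\sigma(R,c)$ (it need not run through $c$). Routing through the convexity of $\sigma(R,c)$ together with the gate property of $\proj_R$ sidesteps this difficulty.
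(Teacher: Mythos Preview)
Your proof is correct. Both inclusions are argued cleanly: Step~1 correctly exploits that the panel $\{e,c\}$ lies in the single wall $\partial\alpha$, so the concatenated gallery crosses every wall at most once and Lemma~\ref{Lemma: minimal gallery crossing roots} applies; Step~2 is sound, and your observation that $\sigma(R,c)\cap R=\{c\}$ combined with convexity and the gate property is the right way to avoid the pitfall you flag in the final paragraph.

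There is nothing to compare against in the paper itself: the lemma is stated with a citation to \cite[Proposition~4.3]{We09} and no proof is given here. Your argument supplies a self-contained proof using only tools already available in the paper (the gate property of $\proj_R$, convexity of roots, and Lemma~\ref{Lemma: minimal gallery crossing roots}), which is a genuine addition over merely invoking the reference.
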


\begin{lemma}[{\cite[Proposition~4.11]{We09}}]\label{Lemma:Prop4.11}
	Let $R$ be a gem, let $\alpha \in \Phi_R$ be a root and let $c \in \alpha \cap R$. Then $\sigma(R, c) \subseteq \alpha$.
\end{lemma}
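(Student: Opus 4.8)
The plan is to use Lemma~\ref{Lemma:Prop4.3} to translate the assertion into a statement about projections, and then to derive a contradiction from the characterisation of minimal galleries in Lemma~\ref{Lemma: minimal gallery crossing roots}. Concretely, let $d \in \sigma(R,c)$; by Lemma~\ref{Lemma:Prop4.3} this means $\proj_R d = c$, and we must show $d \in \alpha$. I would argue by contradiction, assuming $d \in -\alpha = W \setminus \alpha$.

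Since $\alpha \in \Phi_R$, the root $\alpha$ cuts $R$, so $(-\alpha) \cap R \neq \emptyset$ and we may fix a chamber $c' \in (-\alpha)\cap R$. Because $c = \proj_R d$ and $c' \in R$, the defining property of the projection gives $\ell(\delta(d,c')) = \ell(\delta(d,c)) + \ell(\delta(c,c'))$; hence if $\Gamma_1$ is a minimal gallery from $d$ to $c$ and $\Gamma_2$ is a minimal gallery from $c$ to $c'$, then their concatenation $\Gamma$ is a gallery from $d$ to $c'$ whose length equals $\ell(\delta(d,c'))$, and is therefore minimal. Now $\Gamma_1$ is a minimal gallery joining $d \in -\alpha$ to $c \in \alpha$, so its endpoints lie on opposite sides of the wall $\partial\alpha$, and therefore $\Gamma_1$ crosses $\partial\alpha$ (a minimal gallery crosses precisely the walls separating its endpoints, cf.\ \cite[Lemma~3.69]{AB08} as used in the proof of Lemma~\ref{Lemma: minimal gallery crossing roots}). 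Likewise $\Gamma_2$ joins $c \in \alpha$ to $c' \in -\alpha$, so $\Gamma_2$ crosses $\partial\alpha$ as well. Consequently $\Gamma$ is a minimal gallery crossing the wall $\partial\alpha$ at least twice, contradicting Lemma~\ref{Lemma: minimal gallery crossing roots}. Hence $d \in \alpha$, and since $d \in \sigma(R,c)$ was arbitrary, we conclude $\sigma(R,c) \subseteq \alpha$.

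The only delicate point I anticipate is the verification that the concatenation $\Gamma$ is genuinely minimal --- this is exactly where $c$ being the \emph{projection} of $d$ onto $R$ is used, via the length additivity $\ell(\delta(d,c')) = \ell(\delta(d,c)) + \ell(\delta(c,c'))$ --- together with the (standard, and already invoked in the excerpt) fact that a minimal gallery between two chambers lying on opposite sides of a wall must cross that wall. I note that the argument uses neither that $R$ is a gem nor the specific structure of sectors beyond Lemma~\ref{Lemma:Prop4.3}: it shows more generally that for any residue $R$, any root $\alpha$ with $(-\alpha)\cap R \neq \emptyset$, and any chamber $d$ with $\proj_R d \in \alpha$, one has $d \in \alpha$.
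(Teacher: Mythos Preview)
Your argument is correct. Note, however, that the paper does not supply its own proof of this lemma: it is simply quoted from \cite[Proposition~4.11]{We09}, so there is no in-paper argument to compare against.

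That said, your proof is essentially the standard one, and in fact you are reproving (the contrapositive of) \cite[Lemma~5.45]{AB08}, which the paper already invokes elsewhere (e.g.\ in the proofs of Lemma~\ref{Lemma: root is union of sectors} and Lemma~\ref{Lemma:convexhullofsectors}). Your closing observation is exactly right: the argument shows, for any residue $R$ and any root $\alpha$ cutting $R$, that $d \in \alpha \Longleftrightarrow \proj_R d \in \alpha$; the gem hypothesis and the sector structure enter only through Lemma~\ref{Lemma:Prop4.3}, which identifies $\sigma(R,c)$ with the fibre $\proj_R^{-1}(c)$. So one could equally well deduce the lemma in one line from \cite[Lemma~5.45]{AB08} together with Lemma~\ref{Lemma:Prop4.3}.
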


\begin{lemma}\label{Lemma: root is union of sectors}
	Let $R$ be a gem and let $\alpha \in \Phi_R$. Then $\alpha = \bigcup_{c\in R\cap \alpha} \sigma(R, c)$.
\end{lemma}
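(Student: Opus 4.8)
The plan is to prove the two inclusions separately. The inclusion $\bigcup_{c \in R \cap \alpha} \sigma(R, c) \subseteq \alpha$ is immediate: for each $c \in R \cap \alpha$ we have $\sigma(R, c) \subseteq \alpha$ by Lemma~\ref{Lemma:Prop4.11}, and a union of subsets of $\alpha$ is again a subset of $\alpha$. So the whole content is in the reverse inclusion $\alpha \subseteq \bigcup_{c \in R \cap \alpha} \sigma(R, c)$.

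For the reverse inclusion, I would take an arbitrary chamber $d \in \alpha$ and produce $c \in R \cap \alpha$ with $d \in \sigma(R, c)$. The natural candidate is $c := \proj_R d$. By Lemma~\ref{Lemma:Prop4.3}, $d \in \sigma(R, c)$ holds automatically once we know $c = \proj_R d$, so the only thing to check is that $c \in \alpha$ (we trivially have $c \in R$). Suppose for contradiction that $c = \proj_R d \in -\alpha$. Since $\alpha \in \Phi_R$, the wall $\partial\alpha$ cuts $R$, so there is a chamber $c' \in R \cap \alpha$. Now consider a minimal gallery from $d$ to $c'$: by the projection property (the defining property of $\proj_R$, namely $\ell(\delta(d, c')) = \ell(\delta(d, c)) + \ell(\delta(c, c'))$ for $c = \proj_R d$, together with convexity of residues and of the building), such a minimal gallery can be taken to pass through $c = \proj_R d$. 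Its first segment from $d$ to $c$ stays inside $\alpha$ up to $c$ — wait, that is precisely what we must not assume; instead I argue via walls.

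Here is the cleaner argument I would actually write: let $c := \proj_R d$ and suppose $c \notin \alpha$, i.e.\ $c \in -\alpha$. Pick $c' \in R \cap \alpha$ (exists since $\alpha$ cuts $R$). By the defining property of the projection, $\ell(\delta(d, c')) = \ell(\delta(d, c)) + \ell(\delta(c, c'))$, so concatenating a minimal gallery from $d$ to $c$ with one from $c$ to $c'$ yields a minimal gallery $\Gamma$ from $d$ to $c'$. Since $d \in \alpha$ and $c' \in \alpha$ but $c \in -\alpha$, the gallery $\Gamma$ crosses the wall $\partial\alpha$ at least twice (once on the way from $d$ into $-\alpha$ to reach $c$, once on the way back from $c$ to $c'$). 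This contradicts Lemma~\ref{Lemma: minimal gallery crossing roots}, which says a minimal gallery crosses each wall at most once. Hence $c \in \alpha$, so $c \in R \cap \alpha$, and $d \in \sigma(R, c)$ by Lemma~\ref{Lemma:Prop4.3}. This proves $\alpha \subseteq \bigcup_{c \in R \cap \alpha} \sigma(R, c)$ and completes the proof.

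The main obstacle — really the only subtlety — is justifying that the minimal gallery through $c = \proj_R d$ genuinely crosses $\partial\alpha$ twice; one must make sure $c' $ can be chosen in $R \cap \alpha$ (which is exactly the hypothesis $\alpha \in \Phi_R$) and that the "crossing count" argument is valid, i.e.\ that a path starting in $\alpha$, entering $-\alpha$, and ending in $\alpha$ must cross the separating wall at least twice. This last point is intuitively clear for galleries in a Coxeter complex (roots are convex, and consecutive chambers lie on the same side of a wall unless the connecting panel is in that wall), and can be cited via convexity of roots (\cite[Proposition~5.81(1)]{AB08}) combined with Lemma~\ref{Lemma: minimal gallery crossing roots}; alternatively one notes directly that if $\Gamma$ crossed $\partial\alpha$ only once then one endpoint would lie in $\alpha$ and the other in $-\alpha$, whereas both $d$ and $c'$ lie in $\alpha$, so the number of crossings is even and, being positive, is at least $2$.
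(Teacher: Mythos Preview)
Your proof is correct and follows the same route as the paper's: one inclusion via Lemma~\ref{Lemma:Prop4.11}, the other by setting $c := \proj_R d$, showing $c \in R \cap \alpha$, and applying Lemma~\ref{Lemma:Prop4.3}. The only difference is that for the fact $\proj_R d \in \alpha$ the paper simply cites \cite[Lemma~5.45]{AB08}, whereas you supply an explicit wall-crossing argument via Lemma~\ref{Lemma: minimal gallery crossing roots}, which amounts to reproving that lemma in this special case.
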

\begin{proof}
	One inclusion follows from Lemma~\ref{Lemma:Prop4.11}. For the other let $x\in \alpha$. Since $\alpha \in \Phi_R$, \cite[Lemma~$5.45$]{AB08} yields $\proj_R x \in R \cap \alpha$. Now the claim follows from Lemma~\ref{Lemma:Prop4.3}.
\end{proof}

Let $R$ be a gem, let $c \in R$ be a chamber and let $\alpha \in [R, c]$. We define $T_{R, c, \alpha}$ to be the set of all translations of $\Sigma$ that fix every root in $[R, c]$ except $\alpha$. By \cite[Proposition~1.24]{We09}, the group $T_{R, c, \alpha}$ acts transitively and faithfully on the parallel class of $\alpha$. We let $t_{R, c, \alpha} \in T_{R, c, \alpha}$ be the special translation with $t_{R, c, \alpha}(\alpha) \subsetneq \alpha$ and for each special translation $t \in T_{R, c, \alpha}$ with $t(\alpha) \subsetneq \alpha$ we have $t(\alpha) \subseteq t_{R, c, \alpha}(\alpha)$. Note that the existence of $t_{R, c, \alpha}$ follows from \cite[Proposition~1.28]{We09}: there exists a positive integer $k$ such that for each translation $t$ the translation $t^k$ is a special translation. Moreover, we define the special translation
\[ t_{R, c} := \prod_{\alpha \in [R, c]} t_{R, c, \alpha} \in \Aut_0(\Sigma). \]

\begin{remark}
	Note that the set of all translations of $\Sigma$ is an abelian group by \cite[Proposition~1.24]{We09}. Thus the special translation $t_{R, c}$ is well-defined.
\end{remark}

\begin{lemma}\label{Lemma:oppositeinversetranslation}
	Let $R$ be a gem and let $c, d \in R$ be opposite in $R$. Then $t_{R, d} = t_{R, c}^{-1}$.
\end{lemma}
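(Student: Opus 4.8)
The plan is to reduce the identity $t_{R,d} = t_{R,c}^{-1}$ to a term-by-term comparison of the defining products, exploiting that the translations of $\Sigma$ form an abelian group (\cite[Proposition~1.24]{We09}). Concretely, I would first show that
\[ [R,d] = \{ -\alpha \mid \alpha \in [R,c] \}, \]
and then, for each $\alpha \in [R,c]$, that $t_{R,d,-\alpha} = t_{R,c,\alpha}^{-1}$; combining these gives $t_{R,d} = \prod_{\beta \in [R,d]} t_{R,d,\beta} = \prod_{\alpha \in [R,c]} t_{R,c,\alpha}^{-1} = \bigl( \prod_{\alpha \in [R,c]} t_{R,c,\alpha} \bigr)^{-1} = t_{R,c}^{-1}$, where the reindexing and the passage of the inverse through the product are justified by commutativity of translations.

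For the first step, recall that the gem $R$, being a residue of the thin complex $\Sigma$, is itself a finite Coxeter complex of type $(\langle J \rangle, J)$ with $J := S \setminus \{o\}$, in which $c$ and $d$ are opposite, i.e.\ $\delta(c,d) = r_J$. A minimal gallery in $R$ from $c$ to $d$ has length $\ell(r_J)$, crosses each wall of $R$ at most once by Lemma~\ref{Lemma: minimal gallery crossing roots}, and crosses exactly the walls separating $c$ and $d$; since $R$ has precisely $\ell(r_J)$ walls, every wall of $R$ separates $c$ and $d$, so for each $\alpha \in \Phi_R$ exactly one of $c,d$ lies in $\alpha$. Now let $\alpha \in [R,c]$, so $c \in \alpha$ and some chamber $c_0 \in R$ adjacent to $c$ lies in $-\alpha$; then $\alpha \in \Phi_R$, hence $d \in -\alpha$, and $r_\alpha$ stabilizes $R$ with $r_\alpha(c) = c_0$. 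Transporting the situation into $\langle J \rangle$-coordinates via $\sigma_c$ (so $c \leftrightarrow 1$, $d \leftrightarrow r_J$, and $r_\alpha|_R$ becomes left multiplication by the simple reflection $s := \delta(c,c_0) \in J$), one computes $\delta(d, r_\alpha(d)) = r_J s r_J$, which has length $\ell(r_J) - \ell(s r_J) = 1$ because $r_J$ is the longest element of $\langle J \rangle$; hence $r_J s r_J \in J$ and $d$ is adjacent to $r_\alpha(d)$ in $R$. The panel $Q := \{ d, r_\alpha(d) \}$ is stabilized by $r_\alpha$, so $Q \in \partial\alpha$, and since $d \in -\alpha \not\ni r_\alpha(d)$ we obtain $-\alpha \in [R,d]$. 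Running the same argument with $c$ and $d$ interchanged (they are again opposite) yields the reverse inclusion, proving the displayed equality.

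For the second step, fix $\alpha \in [R,c]$. A translation fixes a root if and only if it fixes the opposite root, so the equality just established gives $T_{R,d,-\alpha} = T_{R,c,\alpha} =: T$. The element $t_{R,c,\alpha}^{-1}$ again lies in $T$ and is a special translation, and from $t_{R,c,\alpha}(\alpha) \subsetneq \alpha$ we get $t_{R,c,\alpha}^{-1}(-\alpha) \subsetneq -\alpha$. To verify the extremality condition defining $t_{R,d,-\alpha}$: if $t \in T$ is a special translation with $t(-\alpha) \subsetneq -\alpha$, i.e.\ $t(\alpha) \supsetneq \alpha$, then $t^{-1} \in T$ is a special translation with $t^{-1}(\alpha) \subsetneq \alpha$, hence $t^{-1}(\alpha) \subseteq t_{R,c,\alpha}(\alpha)$ by definition of $t_{R,c,\alpha}$; applying $t$ and then $t_{R,c,\alpha}^{-1}$ and using that translations commute gives $t_{R,c,\alpha}^{-1}(\alpha) \subseteq t(\alpha)$, i.e.\ $t(-\alpha) \subseteq t_{R,c,\alpha}^{-1}(-\alpha)$. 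Thus $t_{R,c,\alpha}^{-1}$ satisfies the defining properties of $t_{R,d,-\alpha}$, and by uniqueness of the latter (a consequence of the faithful action of $T$ on the parallel class of $\alpha$) we conclude $t_{R,d,-\alpha} = t_{R,c,\alpha}^{-1}$.

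I expect the main obstacle to be the first step, and within it the claim that the wall of a root $\alpha \in [R,c]$ also meets a panel at the opposite chamber $d$. This is what forces one to invoke, carefully and after passing to $\langle J \rangle$-coordinates, the standard facts about the longest element $r_J$ of the finite Coxeter group $\langle J \rangle$ — namely $\ell(s r_J) = \ell(r_J) - 1$ for $s \in J$ and, implicitly, $\ell(r_J v) = \ell(r_J) - \ell(v)$ for $v \in \langle J \rangle$ — whereas the second step and the final assembly are essentially formal once the description of $[R,d]$ is in hand.
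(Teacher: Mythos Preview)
Your proof is correct and follows exactly the same strategy as the paper: reduce to the two identities $[R,d] = -[R,c]$ and $t_{R,d,-\alpha} = t_{R,c,\alpha}^{-1}$, then combine them using commutativity of translations. The paper delegates these two identities to Exercise~\ref{Exercise: translations}, whereas you supply full proofs; your arguments for both are sound (in particular, the length computation $\ell(r_J s r_J) = \ell(r_J) - \ell(s r_J) = 1$ is valid, and uniqueness of $t_{R,d,-\alpha}$ follows from the fact that a transitive faithful action of an abelian group is free).
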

\begin{proof}
	Using Exercise~(\ref{Exercise: translations}), we have $[R, c] = -[R, d] := \{ -\alpha \mid \alpha \in [R, d] \}$ and  $t_{R, c, \alpha}^{-1} = t_{R, d, -\alpha}$. This implies:
	\[ t_{R, d} = \prod_{\alpha \in [R, d]} t_{R, d, \alpha} = \prod_{\alpha \in [R, c]} t_{R, d, -\alpha} = \prod_{\alpha \in [R, c]} t_{R, c, \alpha}^{-1} = \left( \prod_{\alpha \in [R, c]} t_{R, c, \alpha} \right)^{-1} = t_{R, c}^{-1}. \qedhere  \]
\end{proof}

\begin{lemma}\label{Lemma:invariantsector}
	Let $R$ be a gem and let $c\in R$. Then we have $t_{R, c}(\beta) \subsetneq \beta$ for all $\beta \in [R, c]$. In particular, we have $t_{R, c}(\sigma(R, c)) \subseteq \sigma(R, c)$ and $t_{R, c}(R) \subseteq \sigma(R, c)$.
\end{lemma}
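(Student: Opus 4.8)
The plan is to treat the three assertions of the lemma in turn; the first two are short, and the inclusion $t_{R,c}(R)\subseteq\sigma(R,c)$ is where the real work lies. For the first assertion I would fix $\beta\in[R,c]$ and use that the translations of $\Sigma$ form an abelian group (the Remark following the definition of $t_{R,c}$, resting on \cite[Proposition~1.24]{We09}): writing $t_{R,c}=t_{R,c,\beta}\circ\prod_{\alpha\in[R,c]\setminus\{\beta\}}t_{R,c,\alpha}$ and noting that every factor $t_{R,c,\alpha}$ with $\alpha\neq\beta$ fixes $\beta$ setwise (by definition of $T_{R,c,\alpha}$), one gets $t_{R,c}(\beta)=t_{R,c,\beta}(\beta)\subsetneq\beta$. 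The inclusion $t_{R,c}(\sigma(R,c))\subseteq\sigma(R,c)$ is then immediate, because $t_{R,c}$ is a bijection and hence commutes with intersections:
\[ t_{R,c}(\sigma(R,c))=t_{R,c}\Big(\bigcap_{\beta\in[R,c]}\beta\Big)=\bigcap_{\beta\in[R,c]}t_{R,c}(\beta)\subseteq\bigcap_{\beta\in[R,c]}\beta=\sigma(R,c). \]

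For $t_{R,c}(R)\subseteq\sigma(R,c)$ it is enough, since $\sigma(R,c)=\bigcap_{\beta\in[R,c]}\beta$, to show $t_{R,c}(R)\subseteq\beta$ for each fixed $\beta\in[R,c]$. Here is how I would argue. Since $c\in\beta$ (by definition of $[R,c]$), the first part gives $t_{R,c}(c)\in t_{R,c}(\beta)\subseteq\beta$, so $t_{R,c}(R)\cap\beta\neq\emptyset$. As $t_{R,c}(R)$ is a residue (the image of $R$ under an automorphism of $\Sigma$), either $\beta$ cuts $t_{R,c}(R)$ or $t_{R,c}(R)\subseteq\beta$ — it cannot lie in $-\beta$, since it meets $\beta$. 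Thus I only need to exclude that $\beta$ cuts $t_{R,c}(R)$, equivalently that $t_{R,c}^{-1}(\beta)$ cuts $R$. Now $t_{R,c}^{-1}$ is again a translation, so $t_{R,c}^{-1}(\beta)$ is parallel to $\beta$, and $t_{R,c}^{-1}(\beta)\neq\beta$ because $t_{R,c}(\beta)\subsetneq\beta$ shows that $t_{R,c}$ does not stabilize $\beta$. The crucial point is then that \emph{distinct parallel roots cannot both cut a gem}: concretely, the roots cutting $R$ are exactly the finitely many walls through the special vertex of $R$, and distinct walls through a common point are never parallel (cf.\ \cite{We09}). Since $\beta$ cuts $R$ while $t_{R,c}^{-1}(\beta)$ is a different root parallel to $\beta$, the latter does not cut $R$; hence $\beta$ does not cut $t_{R,c}(R)$ and $t_{R,c}(R)\subseteq\beta$. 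Intersecting over $\beta\in[R,c]$ yields $t_{R,c}(R)\subseteq\sigma(R,c)$.

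The step I expect to be the main obstacle is this last structural fact about gems; everything else is formal manipulation with translations, with the definition of a sector, and with the notion of a root cutting a residue. If one prefers not to quote it, one can instead aim at $t_{R,c,\beta}(R)\subseteq\beta$ for each single $\beta\in[R,c]$ (using $t_{R,c}^{-1}(\beta)=t_{R,c,\beta}^{-1}(\beta)$, valid because the remaining factors fix $\beta$) and then intersect — but this reduces to exactly the same structural input.
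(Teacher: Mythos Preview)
Your proof is correct and follows essentially the same route as the paper: the first two assertions are handled identically, and for $t_{R,c}(R)\subseteq\sigma(R,c)$ both you and the paper reduce to the structural fact that two distinct parallel roots cannot both cut a gem (this is \cite[Proposition~1.13]{We09}). The only cosmetic difference is that the paper pushes forward to the gem $R':=t_{R,c}(R)$ and shows $\beta\notin\Phi_{R'}$, while you pull back and show $t_{R,c}^{-1}(\beta)\notin\Phi_R$; these are equivalent via the bijection $t_{R,c}$.
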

\begin{proof}
	Let $\alpha \in [R, c]$. By definition of the special translation $t_{R, c, \alpha}$ we have $t_{R, c, \alpha}(\beta) = \beta$ for each $\beta \in [R, c]$ with $\alpha \neq \beta$. This implies $t_{R, c}(\alpha) = t_{R, c, \alpha}(\alpha) \subsetneq \alpha$ and we deduce the following:
	\[ t_{R, c}(\sigma(R, c)) = t_{R, c} \left( \bigcap_{\alpha \in [R, c]} \alpha \right) = \bigcap_{\alpha \in [R, c]} t_{R, c}(\alpha) \subsetneq \bigcap_{\alpha \in [R, c]} \alpha = \sigma(R, c). \]
	Let $\alpha \in [R, c]$. Then $R' := t_{R, c}(R)$ is a gem as well and $R' \cap t_{R, c}(\alpha) \neq \emptyset$. By \cite[Proposition~1.13]{We09} we have $\alpha \notin \Phi_{R'}$ and, as $t_{R, c}(\alpha) \subsetneq \alpha$, it follows $R' \subseteq \alpha$. This implies
	\[ t_{R, c}(R) = R' \subseteq \bigcap_{\alpha \in [R, c]} \alpha = \sigma(R, c). \qedhere \]
\end{proof}

\begin{definition}
	Let $\alpha \in \Phi$ be a root and let $R$ be a gem. Then there exists a unique root $\alpha_R \in \Phi_R$ which is parallel to $\alpha$ (see \cite[Proposition~$1.18$]{We09}).
\end{definition}

\begin{lemma}\label{Lemma: alpha_R subseteq alpha}
	Let $R$ be a gem, let $\alpha \in \Phi$ be a root and let $c\in R \cap \alpha$. Then $\alpha_R \subseteq \alpha$.
\end{lemma}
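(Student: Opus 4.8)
The plan is to reduce the statement to the already-established fact that a sector based at a chamber of $R\cap\alpha$ lies inside $\alpha$ (Lemma~\ref{Lemma:Prop4.11}), combined with the description of the root $\alpha_R$ as a union of such sectors (Lemma~\ref{Lemma: root is union of sectors}). First I would observe that $\alpha_R\in\Phi_R$ by definition, so Lemma~\ref{Lemma: root is union of sectors} applies to $\alpha_R$: we have $\alpha_R=\bigcup_{d\in R\cap\alpha_R}\sigma(R,d)$. Thus it suffices to show that each sector $\sigma(R,d)$ with $d\in R\cap\alpha_R$ is contained in $\alpha$.

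The key point is to relate $R\cap\alpha_R$ to $R\cap\alpha$. Since $\alpha_R$ is parallel to $\alpha$ and $c\in R\cap\alpha$, I expect that $c\in\alpha_R$ as well; more precisely, I would argue that on the gem $R$ the two roots $\alpha$ and $\alpha_R$ cut out the same half, i.e. $R\cap\alpha = R\cap\alpha_R$. This should follow from parallelism: $\alpha_R\subseteq\alpha$ or $\alpha\subseteq\alpha_R$ as subsets of $\Sigma$ (one of the two, by the definition of parallel), and in either case, since $c\in R\cap\alpha$ and $\alpha_R$ is the unique root in $\Phi_R$ parallel to $\alpha$, the chamber $c$ lies in $\alpha_R$; running the same argument for the wall of $\alpha_R$ inside $R$ and using thinness/convexity of $R$ (a gem is a residue, hence convex) pins down $R\cap\alpha=R\cap\alpha_R$. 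Alternatively one can cite \cite[Proposition~1.18]{We09} or the surrounding results in \cite{We09} for the precise compatibility of $\alpha_R$ with $\alpha$ on $R$.

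Once $R\cap\alpha_R = R\cap\alpha$ is known, the conclusion is immediate: for each $d\in R\cap\alpha_R = R\cap\alpha$, Lemma~\ref{Lemma:Prop4.11} gives $\sigma(R,d)\subseteq\alpha$, and therefore
\[
\alpha_R \;=\; \bigcup_{d\in R\cap\alpha_R}\sigma(R,d)\;\subseteq\;\alpha .
\]
The main obstacle I anticipate is the bookkeeping in the middle step: making the identification $R\cap\alpha=R\cap\alpha_R$ fully rigorous without circularity, since "parallel" only gives containment of the roots as global subsets and I must transfer that to the restriction to $R$. If a clean reference in \cite{We09} (e.g. in the vicinity of Proposition~1.18) states that $\alpha_R$ and $\alpha$ agree on $R$, I would simply invoke it; otherwise I would spell out the short argument using that $R$ is a convex thin-on-$\Sigma$ residue and that $\alpha_R$ is the unique $R$-cutting root parallel to $\alpha$.
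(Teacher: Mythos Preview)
Your plan has a genuine gap in the ``middle step,'' and once that step is filled in correctly the sector detour becomes superfluous.

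First, the identity $R\cap\alpha = R\cap\alpha_R$ that you aim for is \emph{false} in general. If $\alpha\notin\Phi_R$ (which is precisely the interesting case), then $c\in R\cap\alpha$ forces $R\subseteq\alpha$, so $R\cap\alpha=R$; but $\alpha_R\in\Phi_R$ cuts $R$, so $R\cap\alpha_R$ is a proper half of $R$. Likewise your claim ``$c\in\alpha_R$'' need not hold: nothing prevents $c$ from lying in $R\cap(-\alpha_R)$. The argument you sketch (``parallel means $\alpha_R\subseteq\alpha$ or $\alpha\subseteq\alpha_R$, and in either case $c\in\alpha_R$'') breaks in the first alternative: $\alpha_R\subseteq\alpha$ and $c\in\alpha$ do not give $c\in\alpha_R$.

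Second, even weakening to $R\cap\alpha_R\subseteq R\cap\alpha$ does not help: that inclusion is \emph{equivalent} to the conclusion $\alpha_R\subseteq\alpha$. Indeed, if instead $\alpha\subsetneq\alpha_R$, pick $d\in R\cap(-\alpha_R)\neq\emptyset$; then $d\notin\alpha_R\supsetneq\alpha$, so $d\in R\cap(-\alpha)$, whence $\alpha\in\Phi_R$ and uniqueness gives $\alpha=\alpha_R$, a contradiction. This is exactly the paper's argument, and once you have it you are done without ever touching sectors. Also note that Lemma~\ref{Lemma:Prop4.11} as stated requires $\alpha\in\Phi_R$, so you cannot apply it to $\alpha$ in the case $\alpha\notin\Phi_R$.

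The paper's proof is the direct two-case split: if $\alpha\in\Phi_R$ then $\alpha_R=\alpha$; if not, then $R\subseteq\alpha$, and since $\alpha_R\in\Phi_R$ there is a chamber in $R\cap(-\alpha_R)\subseteq\alpha\setminus\alpha_R$, which by parallelism forces $\alpha_R\subseteq\alpha$.
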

\begin{proof}
	As $c\in R \cap \alpha$, we have $R \cap \alpha \neq \emptyset$. Now there are two cases. If $R \cap (-\alpha) = \emptyset$, then $R \subseteq \alpha$. Moreover, as $\alpha_R$ and $\alpha$ are parallel, the fact $R \cap (-\alpha_R) \neq \emptyset$ implies $\alpha_R \subseteq \alpha$. If $R \cap (-\alpha) \neq \emptyset$, then $\alpha \in \Phi_R$ and, hence, $\alpha_R = \alpha$. This finishes the proof.
\end{proof}

\begin{proposition}\label{Prop:Sectorasconvexhull}
	Let $R$ be a gem and $c\in R$. Then $\sigma(R, c) = \conv(\{ t_{R, c}^n(c) \mid n \in \Nb \})$.
\end{proposition}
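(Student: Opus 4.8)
The proof splits into the two inclusions, and only the second needs real work. For $\conv(\{t_{R,c}^n(c)\mid n\in\Nb\})\subseteq\sigma(R,c)$: every root in $[R,c]$ contains $c$, so $c\in\sigma(R,c)$, and by Lemma~\ref{Lemma:invariantsector} the translation $t:=t_{R,c}$ satisfies $t(\sigma(R,c))\subseteq\sigma(R,c)$, whence $t^n(c)\in\sigma(R,c)$ for every $n$. Since $\sigma(R,c)=\bigcap_{\alpha\in[R,c]}\alpha$ is an intersection of roots and roots are convex, $\sigma(R,c)$ is convex and therefore contains $\conv(\{t^n(c)\mid n\in\Nb\})$; alternatively one applies Lemma~\ref{Lemma:Prop29.20We09}, each $\alpha\in[R,c]$ being a root that contains the whole orbit.

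For the reverse inclusion, by Lemma~\ref{Lemma:Prop29.20We09} it suffices to show that every root $\beta$ of $\Sigma$ with $\{t^n(c)\mid n\in\Nb\}\subseteq\beta$ contains $\sigma(R,c)$. The decisive input is that $t=t_{R,c}$ is \emph{regular}, i.e.\ it stabilizes no root of $\Sigma$; I would deduce this from the defining property of $t_{R,c}$ in Lemma~\ref{Lemma:invariantsector} (that $t$ strictly contracts each root of $[R,c]$, and these roots form a simple system for the finite reflection group attached to the gem $R$, so the direction of $t$ is strictly dominant and hence regular), or else quote it from \cite{We09}. Granting regularity, a purely formal argument gives, for every root $\gamma$: the roots $t^n(\gamma)$, $n\in\Zb$, are pairwise distinct and totally ordered by inclusion, and, because only finitely many walls separate two fixed chambers (cf.\ Lemma~\ref{Lemma: minimal gallery crossing roots} and the discussion around it), $\bigcup_{n\ge 0}t^{n}(\gamma)=\Sigma$ whenever $\gamma\subsetneq t(\gamma)$ and $\bigcap_{n\ge 0}t^{-n}(\gamma)=\emptyset$ whenever $t(\gamma)\subsetneq\gamma$ (and symmetrically).

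Now fix such a $\beta$ and note $c\in\beta$; let $\beta_R\in\Phi_R$ be the root parallel to $\beta$. If $c\in\beta_R$, then Lemma~\ref{Lemma:Prop4.11} gives $\sigma(R,c)\subseteq\beta_R$ and Lemma~\ref{Lemma: alpha_R subseteq alpha} (applied with $c\in R\cap\beta$) gives $\beta_R\subseteq\beta$, so $\sigma(R,c)\subseteq\beta$. It remains to exclude $c\notin\beta_R$. Assuming $c\notin\beta_R$, we have $c\in(-\beta_R)\cap R$, so $\sigma(R,c)\subseteq -\beta_R$ by Lemma~\ref{Lemma:Prop4.11}, and as $\{t^n(c)\}\subseteq\sigma(R,c)$ each $t^n(c)$ lies in $-\beta_R$; also $c\in\beta\setminus\beta_R$ together with $\beta_R\parallel\beta$ forces $\beta_R\subsetneq\beta$. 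Hence $t^n(c)\in\beta\cap(-\beta_R)$ for all $n\in\Nb$, i.e.\ $c\in\bigcap_{n\ge 0}t^{-n}(\beta)$ and $c\notin t^{-n}(\beta_R)$ for every $n\ge 0$. The first forces $t(\beta)\subseteq\beta$ (otherwise $(t^{-n}(\beta))_{n\ge 0}$ is strictly decreasing with empty intersection), and hence $t^{n}(\beta_R)\subseteq\beta$ for all $n\ge 0$ since $\beta_R\subseteq\beta$. On the other hand $t(\beta_R)\ne\beta_R$ by regularity, and $t(\beta_R)\subsetneq\beta_R$ is impossible since then $(t^{-n}(\beta_R))_{n\ge 0}$ would be strictly increasing with union $\Sigma$, contradicting $c\notin t^{-n}(\beta_R)$ for all $n$; so $\beta_R\subsetneq t(\beta_R)$ and therefore $\bigcup_{n\ge 0}t^{n}(\beta_R)=\Sigma$. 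Combining, $\Sigma=\bigcup_{n\ge 0}t^{n}(\beta_R)\subseteq\beta$, contradicting $-\beta\ne\emptyset$. So $c\in\beta_R$, which finishes the argument. The one point carrying real content is the regularity of $t_{R,c}$: this is exactly what makes the convex hull of a single geodesic ray of chambers expand to the whole sector, and granted it, the dichotomy above together with Lemmas~\ref{Lemma:Prop4.11} and~\ref{Lemma: alpha_R subseteq alpha} do everything else.
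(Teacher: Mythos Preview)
Your argument follows the same overall route as the paper: reduce via Lemma~\ref{Lemma:Prop29.20We09} to showing $\sigma(R,c)\subseteq\beta$ for every root $\beta$ containing the orbit, pass to the parallel root $\beta_R\in\Phi_R$, and argue by contradiction that $c\in\beta_R$. The difference lies entirely in the step you yourself flag as ``the one point carrying real content'': that $t$ does not fix $\beta_R$.

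Here there is a genuine gap. You invoke \emph{regularity} of $t=t_{R,c}$ (no root is fixed), but the justification you offer---that $[R,c]$ is a simple system, so the direction of $t$ is strictly dominant, hence regular---imports geometric language (direction, dominance) that is neither defined nor developed in the paper's combinatorial setting, and you give no specific citation from \cite{We09}. The paper avoids regularity altogether and proves exactly what is needed, directly and cheaply, using the \emph{second} conclusion of Lemma~\ref{Lemma:invariantsector}, namely $t(R)\subseteq\sigma(R,c)$. Under the contradictory hypothesis $c\notin\beta_R$ one has $\sigma(R,c)\subseteq -\beta_R$ by Lemma~\ref{Lemma:Prop4.11}, so $t(R)\subseteq -\beta_R$; on the other hand $\beta_R\in\Phi_R$ gives $R\not\subseteq -\beta_R$, hence $t(R)\not\subseteq t(-\beta_R)$. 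Since $t(-\beta_R)$ and $-\beta_R$ are parallel, this forces $t(-\beta_R)\subsetneq -\beta_R$, i.e.\ $\beta_R\subsetneq t(\beta_R)$. From there the paper finishes in one line: some $t^n(\beta_R)$ contains $\beta$, and $t^n(c)\notin t^n(\beta_R)$ yields $t^n(c)\notin\beta$, contradicting $\beta\in\Lambda$. Your detour through $t(\beta)\subseteq\beta$ and $\bigcup_n t^n(\beta_R)=\Sigma$ is correct once $\beta_R\subsetneq t(\beta_R)$ is established, but it is longer and still rests on the unjustified regularity claim. Replacing your appeal to regularity with the two-line argument above (which uses only Lemmas~\ref{Lemma:Prop4.11} and~\ref{Lemma:invariantsector}) would make your proof complete and essentially identical to the paper's.
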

\begin{proof}
	We abbreviate $t := t_{R, c}$ and we define $\Lambda := \{ \alpha \in \Phi \mid \forall n\in \Nb: t^n(c) \in \alpha \}$. Using Lemma \ref{Lemma:Prop29.20We09} we have
	\[ \conv(\{ t^n(c) \mid n \in \Nb \}) = \bigcap\limits_{\alpha \in \Lambda} \alpha. \]
	
	Note that Lemma \ref{Lemma:invariantsector} yields $t^n(c) \in \sigma(R, c)$ for all $n\in \mathbb{N}$. Since roots are convex, each sector is a convex set. Hence it suffices to show $\sigma(R, c) \subseteq \alpha$ for each $\alpha \in \Lambda$. 
	
	Let $\alpha \in \Lambda$ be any root. Note that $c\in R \cap \alpha$. By Lemma~\ref{Lemma: alpha_R subseteq alpha} we have $\alpha_R \subseteq \alpha$. We claim that $c\in \alpha_R$. We assume by contrary that $c \notin \alpha_R$, i.e.\ $c\in (-\alpha_R)$. Then Lemma \ref{Lemma:invariantsector} and Lemma \ref{Lemma:Prop4.11} imply $t(R) \subseteq \sigma(R, c) \subseteq (-\alpha_R)$. Since $t(R) \not\subseteq t(-\alpha_R)$ and $t$ is a translation, we deduce $t(-\alpha_R) \subsetneq (-\alpha_R)$ and, in particular, $\alpha_R \subsetneq t(\alpha_R)$ (cf.\ Exercise~(\ref{Exercise: roots})). Now there exists $n\in \Nb$ with $\alpha_R \subseteq \alpha \subseteq t^n(\alpha_R)$. The assumption $c \notin \alpha_R$ implies $t^n(c) \notin t^n(\alpha_R)$ and hence $t^n(c) \notin \alpha$. But this is a contradiction to the assumption $\alpha \in \Lambda$ and we infer $c\in \alpha_R$. 
	
	We conclude that for each $\alpha \in \Lambda$ we have $c \in \alpha_R$. Moreover, Lemma \ref{Lemma:Prop4.11} implies $\sigma(R, c) \subseteq \alpha_R$. In particular, we deduce $\sigma(R, c) \subseteq \alpha_R \subseteq \alpha$ for each $\alpha \in \Lambda$. This finishes the proof.
\end{proof}

\begin{lemma}\label{Lemma:convexhullofsectors}
	Let $R$ be a gem and let $C \subseteq R$. Then the following hold:
	\begin{enumerate}[label=(\alph*)]
		\item $\bigcup_{c\in \conv(C)} \sigma(R, c) = \bigcap_{\alpha \in \Phi_R, \, C \subseteq \alpha} \alpha$;
		
		\item $\conv( \bigcup_{c\in C} \sigma(R, c) ) = \bigcup_{c \in \conv(C)} \sigma(R, c)$.
	\end{enumerate}
\end{lemma}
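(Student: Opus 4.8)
The plan is to prove (a) first, and then deduce (b) from it. The key to (a) is the identity
\[ \conv(C) = R \cap \bigcap_{\alpha \in \Phi_R,\, C \subseteq \alpha} \alpha, \]
which I will call $\mc{A}_0$. The inclusion $\conv(C)\subseteq\mc{A}_0$ is immediate: residues are convex, so $\conv(C)\subseteq R$; and each $\alpha\in\Phi_R$ with $C\subseteq\alpha$ is convex and contains $C$, so $\conv(C)\subseteq\alpha$. For the reverse inclusion I invoke Lemma~\ref{Lemma:Prop29.20We09}, which writes $\conv(C)$ as the intersection of all roots of $\Sigma$ containing $C$: given $d\in\mc{A}_0$ and $\gamma\in\Phi$ with $C\subseteq\gamma$, I must show $d\in\gamma$; if $\gamma\in\Phi_R$ this holds by definition of $\mc{A}_0$, while if $\gamma\notin\Phi_R$ then $R$ lies in $\gamma$ or in $-\gamma$, and since $\emptyset\neq C\subseteq R\cap\gamma$ it must be $R\subseteq\gamma$, so $d\in R\subseteq\gamma$ (the case $C=\emptyset$ is trivial, a gem being cut by some root). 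With this identity, (a) is quick. For ``$\subseteq$'': if $c\in\conv(C)$ then $c\in R$ and $c$ lies in every $\alpha\in\Phi_R$ with $C\subseteq\alpha$, so $\sigma(R,c)\subseteq\alpha$ for each such $\alpha$ by Lemma~\ref{Lemma:Prop4.11}, hence $\sigma(R,c)$ lies in the intersection. For ``$\supseteq$'': given $x$ in the intersection, set $c:=\proj_R x$; since $x\in\alpha$ and $\alpha$ cuts $R$, \cite[Lemma~5.45]{AB08} gives $c\in\alpha$ for every such $\alpha$, so $c\in\mc{A}_0=\conv(C)$, and $x\in\sigma(R,c)$ by Lemma~\ref{Lemma:Prop4.3}.

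For (b), the inclusion $\conv(\bigcup_{c\in C}\sigma(R,c))\subseteq\bigcup_{c\in\conv(C)}\sigma(R,c)$ is immediate: by (a) the right-hand side is an intersection of roots, hence convex, and it clearly contains $\bigcup_{c\in C}\sigma(R,c)$. For the reverse inclusion, write $\mc{A}:=\bigcup_{c\in\conv(C)}\sigma(R,c)=\bigcap_{\alpha\in\Phi_R,\,C\subseteq\alpha}\alpha$ by (a), and use Lemma~\ref{Lemma:Prop29.20We09}: it suffices to show that every root $\beta\in\Phi$ with $\sigma(R,c)\subseteq\beta$ for all $c\in C$ satisfies $\mc{A}\subseteq\beta$. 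As $c\in\sigma(R,c)$ (Lemma~\ref{Lemma:Prop4.3}), such a $\beta$ contains $C$; if moreover $\beta\in\Phi_R$ then $\beta$ is one of the roots in the intersection defining $\mc{A}$ and we are done. If $\beta\notin\Phi_R$ then, as before, $R\subseteq\beta$, hence $\beta_R\subseteq\beta$ by Lemma~\ref{Lemma: alpha_R subseteq alpha}; the crux is then to show $C\subseteq\beta_R$, for then $\beta_R\in\Phi_R$ contains $C$ and $\mc{A}\subseteq\beta_R\subseteq\beta$.

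This reduces (b) to the following claim, which is where I expect the work to lie: if $R$ is a gem, $c\in R$, and $\beta\in\Phi$ satisfies $R\subseteq\beta$ and $\beta\notin\Phi_R$, then $\sigma(R,c)\subseteq\beta$ if and only if $c\in\beta_R$. The ``if'' direction is Lemma~\ref{Lemma:Prop4.11} combined with $\beta_R\subseteq\beta$. For ``only if'' I argue by contradiction: if $c\in-\beta_R$ then $\sigma(R,c)\subseteq-\beta_R$ by Lemma~\ref{Lemma:Prop4.11} applied to $-\beta_R\in\Phi_R$, and since $t_{R,c}(R)\subseteq\sigma(R,c)\subseteq-\beta_R$ (Lemma~\ref{Lemma:invariantsector}) while $R$ meets $\beta_R$, the translation $t_{R,c}$ maps $\beta_R$ to a root meeting $-\beta_R$, forcing $\beta_R\subsetneq t_{R,c}(\beta_R)$. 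Because $\Sigma$ is affine, the parallel class of $\beta_R$ is totally ordered by inclusion of order type $\Zb$ and $t_{R,c}$ shifts it, so $\bigcup_{n\geq 0}t_{R,c}^{\,n}(\beta_R)=\Sigma$; as $\beta$ lies in this class, $\beta\subseteq t_{R,c}^{\,N}(\beta_R)$ for some $N$, whence $t_{R,c}^{\,N}(c)\in t_{R,c}^{\,N}(-\beta_R)\subseteq-\beta$, contradicting $t_{R,c}^{\,N}(c)\in\sigma(R,c)\subseteq\beta$ (the membership $t_{R,c}^{\,N}(c)\in\sigma(R,c)$ again coming from Lemma~\ref{Lemma:invariantsector} and $c\in\sigma(R,c)$).

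The main obstacle is the geometric fact hidden in the claim, namely that a sector cannot be squeezed between two parallel walls; I intend to make this precise exactly as above, via the translation $t_{R,c}$ together with the cofinality of a parallel class of roots in an affine Coxeter complex. Pinning down the direction $t_{R,c}(\beta_R)\supsetneq\beta_R$ and the cofinality statement cleanly from the results of \cite{We09} will need some care, whereas everything else reduces to bookkeeping with Lemmas~\ref{Lemma:Prop29.20We09}, \ref{Lemma:Prop4.3}, \ref{Lemma:Prop4.11}, \ref{Lemma:invariantsector} and \ref{Lemma: alpha_R subseteq alpha}.
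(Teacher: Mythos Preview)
Your proof is correct and follows essentially the same route as the paper: both parts hinge on the identity $\conv(C) = R \cap \bigcap_{\alpha \in \Phi_R,\,C\subseteq\alpha} \alpha$ together with Lemmas~\ref{Lemma:Prop4.3} and \ref{Lemma:Prop4.11}, and (b) is reduced in both arguments to showing $C \subseteq \beta_R$ whenever $\bigcup_{c\in C}\sigma(R,c)\subseteq\beta$. The only difference is that for the key implication $\sigma(R,c)\subseteq\beta \Rightarrow c\in\beta_R$ the paper simply invokes \cite[Proposition~4.12]{We09}, whereas you reprove it via the translation $t_{R,c}$ --- by precisely the argument the paper itself uses in the proof of Proposition~\ref{Prop:Sectorasconvexhull}.
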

\begin{proof}
	(a) We abbreviate $\Psi := \{ \alpha \in \Phi_R \mid C \subseteq \alpha \}$. We first assume $\Psi = \emptyset$. This implies that for $\alpha \in \Phi$ with $C \subseteq \alpha$ we have $\alpha \notin \Phi_R$ and hence $R \subseteq \alpha$. We deduce from Lemma~\ref{Lemma:Prop29.20We09} that $R \subseteq \conv(C)$. As $R$ is convex and $C \subseteq R$, we infer $\conv(C) = R$. Now the claim  follows from Lemma \ref{Lemma:Prop4.3}. Thus we can assume $\Psi \neq \emptyset$. Moreover, we can assume that $C \neq \emptyset$, as otherwise the claim follows directly.
	
	Let $\alpha \in \Psi$. Then we have $\conv(C) \subseteq \alpha$. Lemma \ref{Lemma:Prop4.11} yields $\sigma(R, c) \subseteq \alpha$ for each $c\in \conv(C)$. For the reverse inclusion we let $x\in \bigcap_{\alpha \in \Psi} \alpha$. By \cite[Lemma $5.45$]{AB08} we obtain $\proj_R x \in R \cap \bigcap_{\alpha \in \Psi} \alpha$.
		
	\setcounter{claim}{0}
	\begin{claim}\label{Claim: conv C}
		$R \cap \bigcap_{\alpha \in \Psi} \alpha = \conv(C)$
	\end{claim}
	
	We have already seen that $\conv(C) \subseteq R \cap \bigcap_{\alpha \in \Psi} \alpha$. For the other inclusion we show for each $c \notin \conv(C)$ we have $c \notin R \cap \bigcap_{\alpha \in \Psi} \alpha$. Thus let $c \in \Sigma \backslash \conv(C)$. Then by Lemma~\ref{Lemma:Prop29.20We09} there exists a root $\alpha \in \Phi$ containing $C$ but not $c$. If $c\notin R$ we are done. Thus we can assume $c \in R$. We conclude that $\alpha \in \Phi_R$ (as $C \neq \emptyset$) and $c \notin \bigcap_{\alpha \in \Psi} \alpha$.
	
	By Claim~\ref{Claim: conv C} we have $\proj_R x \in \conv(C)$. Now Lemma~\ref{Lemma:Prop4.3} implies $x \in \sigma(R, \proj_R x) \subseteq \bigcup_{c \in \conv(C)} \sigma(R, c)$, as desired.
	
	$(b)$ We let $X := \bigcup_{c\in C} \sigma(R, c)$. Since $\bigcup_{c\in \conv(C)} \sigma(R, c)$ is convex by $(a)$ and contains $X$, one inclusion is trivial. For the other inclusion we recall that by Lemma \ref{Lemma:Prop29.20We09} the convex hull $\conv(X)$ is equal to the intersection of all roots of $(W, S)$ that contain $X$. Thus it suffices to show the following claim:
	
	\begin{claim}
		Let $\alpha \in \Phi$ be a root. If $X \subseteq \alpha$, then $\bigcup_{c\in \conv(C)} \sigma(R, c) \subseteq \alpha$.
	\end{claim}
	
	Let $\alpha \in \Phi$ be a root with $X \subseteq \alpha$. Now we suppose $c\in C$. Then, as $C \subseteq X \subseteq \alpha$, we have $c \in R \cap \alpha$ and Lemma~\ref{Lemma: alpha_R subseteq alpha} implies $\alpha_R \subseteq \alpha$. Note that $\sigma(R, c) \subseteq X \subseteq \alpha$ and \cite[Proposition $4.12$]{We09} yields $c\in \alpha_R$. In particular, we deduce $C \subseteq \alpha_R$ and hence $\conv(C) \subseteq \alpha_R$. Lemma \ref{Lemma:Prop4.11} yields $\sigma(R, d) \subseteq \alpha_R \subseteq \alpha$ for all $d\in \conv(C)$ and the claim follows.
\end{proof}

\begin{lemma}\label{Lemma: convex hull of root and chamber}
	Let $\Sigma$ be an apartment of $\Delta$, let $\gamma \subseteq \Sigma$ be a root and let $c\in \Sigma$. Then $\conv(\gamma \cup \{c\})$ is a root which is parallel to $\gamma$.
\end{lemma}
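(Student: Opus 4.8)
The plan is to compute the convex hull straight from Lemma~\ref{Lemma:Prop29.20We09}. Since the convex hull of a subset of $\Sigma$ lies inside $\Sigma$, and $\Sigma$ with its induced distance function is isomorphic to the Coxeter complex $\Sigma(W, S)$, I identify $\Sigma$ with $\Sigma(W, S)$, so that $\gamma \in \Phi$ and $c$ is a chamber of $\Sigma(W, S)$. By Lemma~\ref{Lemma:Prop29.20We09},
\[ \conv(\gamma \cup \{c\}) \;=\; \bigcap \Lambda, \qquad \text{where}\quad \Lambda \;:=\; \{\, \alpha \in \Phi \mid \gamma \cup \{c\} \subseteq \alpha \,\}. \]
Every $\alpha \in \Lambda$ contains $\gamma$ and is therefore parallel to $\gamma$ by the definition of ``parallel''; hence $\Lambda$ is contained in the parallel class $\mathcal{P}$ of $\gamma$.

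The main input is the structure of a parallel class in an irreducible affine Coxeter complex, and pinning down (or re-deriving) the precise statement is the step I expect to cost the most: the class $\mathcal{P}$ is totally ordered by inclusion, and the roots of $\mathcal{P}$ that contain $\gamma$ form an increasing chain $\gamma = \gamma_0 \subsetneq \gamma_1 \subsetneq \gamma_2 \subsetneq \cdots$ whose union is all of $\Sigma$. One route to this is to use that a translation sends every root to a parallel (hence comparable) root, that the translation group acts transitively on $\mathcal{P}$, and that the translation lattice is discrete and cocompact; another is the hyperplane-arrangement picture, in which $\mathcal{P}$ consists precisely of the half-spaces bounded by the equally spaced walls parallel to $\partial\gamma$.

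Granting this, the proof ends quickly. If $c \in \gamma$, then $\gamma \in \Lambda$, so $\conv(\gamma \cup \{c\}) \subseteq \gamma$; since roots are convex this forces $\conv(\gamma \cup \{c\}) = \gamma$, which is (trivially) parallel to $\gamma$. If $c \notin \gamma$, then $c \in \Sigma = \bigcup_{k \geq 0} \gamma_k$, so there is a least index $N \geq 1$ with $c \in \gamma_N$. Because $\Lambda \subseteq \mathcal{P}$ and every member of $\Lambda$ contains $\gamma$, we have $\Lambda \subseteq \{\, \gamma_k \mid k \geq 0 \,\}$, and $\gamma_k \in \Lambda$ if and only if $c \in \gamma_k$, i.e.\ if and only if $k \geq N$ (the chain is increasing). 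Hence $\Lambda = \{\, \gamma_k \mid k \geq N \,\}$, and since $\gamma_N$ is the least element of this chain, $\conv(\gamma \cup \{c\}) = \bigcap \Lambda = \gamma_N$. Finally $\gamma_N$ is a root containing $\gamma = \gamma_0$, hence parallel to $\gamma$, which is what we wanted.
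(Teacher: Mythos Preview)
Your argument is correct. It differs from the paper's route: the paper observes that some root $\delta$ contains $\gamma \cup \{c\}$ (existence left implicit), deduces $\conv(\gamma \cup \{c\}) \subsetneq \Sigma$, and then invokes \cite[Proposition~4.26]{We09} as a black box to conclude that this convex hull is a root parallel to $\gamma$. You instead compute the convex hull explicitly via Lemma~\ref{Lemma:Prop29.20We09} and the total order on the parallel class of $\gamma$ (which is \cite[Proposition~1.3]{We09}, cited elsewhere in the paper), identifying it as the least $\gamma_k$ containing $c$. Your approach is more self-contained and yields an explicit description of the convex hull; the price is the step where $\bigcup_{k\geq 0}\gamma_k = \Sigma$, which you sketch but do not fully prove. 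Note that the paper's proof tacitly uses the same fact to guarantee the existence of $\delta$, so the two arguments rest on the same structural input---yours just unpacks what \cite[Proposition~4.26]{We09} does in this special case rather than citing it.
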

\begin{proof}
	Let $\delta \subseteq \Sigma$ be a root with $\gamma \cup \{c\} \subseteq \delta$. Then $\conv(\gamma \cup \{c\}) \subseteq \delta$ and, hence, $\conv(\gamma \cup \{c\})\neq \Sigma$. We deduce from \cite[Proposition $4.26$]{We09} that $\conv(\gamma \cup \{c\})$ is a root parallel to $\gamma$.
\end{proof}

\begin{lemma}\label{Lemma:convexhullisroot}
	Let $R$ be a gem, let $c\in R$ be a chamber and let $C \subseteq R$ be a subset. We set $X := \conv(\{ t_{R, e}^n (c) \mid e\in C, n\in \Nb \})$.
	\begin{enumerate}[label=(\alph*)]
		\item For each $d\in C$ we have $\sigma(R, d) \subseteq  X$.
		
		\item If $C \not\subseteq \gamma$ for each root $\gamma \in \Phi_R$, then $X = \Sigma$
		
		\item If $C = R \cap \gamma$ for some root $\gamma \in \Phi_R$, then $X = \conv(\gamma \cup \{c\})$. In particular, $X$ is a root which is parallel to $\gamma$.
	\end{enumerate}
\end{lemma}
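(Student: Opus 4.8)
The plan is to prove (a) first, as it is the technical heart, and then to deduce (b) and (c) from it together with the sector lemmas already established.

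For (a), fix $d\in C$ (if $C=\emptyset$ there is nothing to prove). By Lemma~\ref{Lemma:Prop29.20We09} it suffices to show $\sigma(R,d)\subseteq\alpha$ for every root $\alpha\in\Phi$ containing the generating set $\{t_{R,e}^n(c)\mid e\in C,\ n\in\Nb\}$ of $X$. So fix such an $\alpha$; then $c=t_{R,d}^0(c)\in\alpha$, hence $c\in R\cap\alpha$ and Lemma~\ref{Lemma: alpha_R subseteq alpha} yields $\alpha_R\subseteq\alpha$. The decisive claim is that $d\in\alpha_R$: granting this, Lemma~\ref{Lemma:Prop4.11} gives $\sigma(R,d)\subseteq\alpha_R\subseteq\alpha$ and we are done. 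To prove $d\in\alpha_R$ I would argue by contradiction, following the proof of Proposition~\ref{Prop:Sectorasconvexhull}: if $d\in-\alpha_R$, then (using $-\alpha_R\in\Phi_R$) Lemma~\ref{Lemma:Prop4.11} and Lemma~\ref{Lemma:invariantsector} give $t_{R,d}(R)\subseteq\sigma(R,d)\subseteq-\alpha_R$; since $\alpha_R$ cuts $R$ we have $R\not\subseteq-\alpha_R$, so $t_{R,d}(R)\not\subseteq t_{R,d}(-\alpha_R)$, and as $t_{R,d}$ is a translation this forces $t_{R,d}(-\alpha_R)\subsetneq-\alpha_R$; choosing $n$ with $\alpha\subseteq t_{R,d}^n(\alpha_R)$ and using $t_{R,d}(c)\in t_{R,d}(R)\subseteq-\alpha_R$ we obtain $t_{R,d}^{n+1}(c)\in t_{R,d}^n(-\alpha_R)=-t_{R,d}^n(\alpha_R)\subseteq-\alpha$, contradicting $t_{R,d}^{n+1}(c)\in\alpha$. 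The one non-routine ingredient here is the existence of such an $n$, which rests on the fact that the positive powers of a translation pushing a root strictly inward exhaust the Coxeter complex; this is exactly the step used in Proposition~\ref{Prop:Sectorasconvexhull}, to which I would appeal, and it is the main obstacle in the whole proof.

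For (b), apply (a) to each $d\in C$ to get $\bigcup_{d\in C}\sigma(R,d)\subseteq X$; since $X$ is convex this gives $\conv\bigl(\bigcup_{d\in C}\sigma(R,d)\bigr)\subseteq X$, and by Lemma~\ref{Lemma:convexhullofsectors}(b) and (a) the left-hand side equals $\bigcup_{e\in\conv(C)}\sigma(R,e)=\bigcap_{\alpha\in\Phi_R,\ C\subseteq\alpha}\alpha$, which is all of $\Sigma$ because, by hypothesis, no $\gamma\in\Phi_R$ contains $C$. Hence $X=\Sigma$.

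For (c), set $\delta:=\conv(\gamma\cup\{c\})$; by Lemma~\ref{Lemma: convex hull of root and chamber} this is a root parallel to $\gamma$, and $\gamma\subseteq\delta$ and $c\in\delta$. To see $X\subseteq\delta$, note that for $e\in C=R\cap\gamma$ Lemma~\ref{Lemma:Prop4.11} gives $\sigma(R,e)\subseteq\gamma\subseteq\delta$ and Lemma~\ref{Lemma:invariantsector} gives $t_{R,e}^n(c)\in\sigma(R,e)$ for $n\ge1$ (while $t_{R,e}^0(c)=c\in\delta$); since $\delta$ is convex, $X\subseteq\delta$. Conversely $c\in X$ (take $n=0$), and by Lemma~\ref{Lemma: root is union of sectors} together with (a) we get $\gamma=\bigcup_{e\in R\cap\gamma}\sigma(R,e)\subseteq X$, so $\delta=\conv(\gamma\cup\{c\})\subseteq X$ by convexity of $X$. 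Therefore $X=\delta$, a root parallel to $\gamma$.
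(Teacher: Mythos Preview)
Your proof is correct; parts (b) and (c) are essentially identical to the paper's argument. The only substantive difference is in part (a).

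For (a), the paper proceeds more directly: from $t_{R,d}(R)\subseteq\sigma(R,d)$ (Lemma~\ref{Lemma:invariantsector}) and Lemma~\ref{Lemma:Prop4.3} one gets $\proj_R t_{R,d}(c)=d$, so $d$ lies on a minimal gallery from $c$ to $t_{R,d}(c)$ and hence $d\in X$. Iterating, $t_{R,d}^n(d)$ lies on a minimal gallery from $t_{R,d}^n(c)$ to $t_{R,d}^{n+1}(c)$, whence $t_{R,d}^n(d)\in X$ for all $n$. Proposition~\ref{Prop:Sectorasconvexhull} then gives $\sigma(R,d)=\conv(\{t_{R,d}^n(d)\mid n\in\Nb\})\subseteq X$ in one stroke.

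Your route instead unpacks the root-by-root analysis behind Proposition~\ref{Prop:Sectorasconvexhull}, adapted so that the orbit is based at $c$ rather than at $d$. This is perfectly valid, and your contradiction argument (from $d\in-\alpha_R$ to $t_{R,d}^{n+1}(c)\in-\alpha$) goes through as written; the ``exhaustion'' step you flag is exactly the one already used in the proof of Proposition~\ref{Prop:Sectorasconvexhull}. The trade-off is that the paper's approach is more modular---it uses Proposition~\ref{Prop:Sectorasconvexhull} as a black box after a short gallery/projection observation---whereas your approach avoids that projection step but at the cost of essentially reproving the heart of Proposition~\ref{Prop:Sectorasconvexhull} in a slightly shifted setting.
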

\begin{proof}
	(a) Let $d\in C$. It follows from Lemma~\ref{Lemma:invariantsector} that $t_{R, d}(c) \in t_{R, d}(R) \subseteq \sigma(R, d)$. Then Lemma~\ref{Lemma:Prop4.3} implies $\proj_R t_{R, d}(c) = d$ and, hence, $d$ is a chamber on a minimal gallery from $c$ to $t_{R, d}(c)$. In particular, we have $d \in X$. For each $n\in \Nb$, we deduce that $t_{R, d}^n(d)$ is a chamber on a minimal gallery from $t^n_{R, d}(c)$ to $t^{n+1}_{R, d}(c)$. We infer that $t^n_{R, d}(d) \in X$. Using Proposition \ref{Prop:Sectorasconvexhull} and the fact that $X$ is convex, we obtain $\sigma(R, d) = \conv( \{t_{R, d}^n(d) \mid n\in \Nb \}) \subseteq X$.
	
	(b) We suppose that for each root $\gamma \in \Phi_R$ we have $C \not\subseteq \gamma$. This implies $C \neq \emptyset$. Let $\alpha \in \Phi$ be a root containing $C$. Then $R \cap \alpha \neq \emptyset$ and $\alpha \notin \Phi_R$. This implies $R \subseteq \alpha$ and Lemma~\ref{Lemma:Prop29.20We09} yields $R \subseteq \conv(C)$. As $C\subseteq R$ and $R$ is convex, we infer that $R = \conv(C)$. Using Lemma~\ref{Lemma:Prop4.3}, Lemma \ref{Lemma:convexhullofsectors}~$(b)$, part (a) and the fact that $X$ is convex, we obtain
	\[ \Sigma = \bigcup_{d\in R} \sigma(R, d) = \bigcup_{d \in \conv(C)} \sigma(R, d) = \conv\left( \bigcup_{d\in C} \sigma(R, d) \right) \subseteq X \subseteq \Sigma. \]
	
	(c) Let $\gamma \in \Phi_R$ such that $C = R \cap \gamma$. We define $\gamma' := \conv(\gamma \cup \{c\})$ and note that $\gamma'$ is a root parallel to $\gamma$ by Lemma~\ref{Lemma: convex hull of root and chamber}. We need to show that $\gamma' = X$. Note that it follows from Lemma~\ref{Lemma: root is union of sectors} that
	\[ \gamma = \bigcup\nolimits_{d\in C} \sigma(R, d). \]
	Note that $c = t_{R, d}^0(c) \in X$ for any $d\in C$. Using part (a), we conclude that $\{c\} \cup \gamma = \{c\} \cup \bigcup_{d\in C} \sigma(R, d) \subseteq X$. As $X$ is convex, we infer $\gamma' = \conv(\gamma \cup \{c\}) \subseteq X$. For the other inclusion we note that by Lemma~\ref{Lemma:invariantsector} and part (a) we have $t_{R, d}^n(c) \in t_{R, d}^n(R) \subseteq \sigma(R, d) \subseteq \gamma \subseteq \gamma'$ for all $d\in C$ and $1 \leq n \in \Nb$. Since $t_{R, d}^0(c) = c \in \gamma'$ and roots are convex, we infer $X \subseteq \gamma'$. We conclude $X = \gamma'$.
\end{proof}

\subsection{The building at infinity}

A building \emph{of type $\tilde{X}_n$} is a building whose type $(W, S)$ is of type $\tilde{X}_n$. Let $\Delta = (\mc{C}, \delta)$ be a building of type $\tilde{X}_n$. A subset $\sigma \subseteq \mc{C}$ is called a \emph{sector of $\Delta$} if there exists an apartment $\Sigma \subseteq \mc{C}$ such that $\sigma \subseteq \Sigma$ and $\sigma$ is a sector of $\Sigma$. We call two sectors of $\Delta$ \emph{parallel} if their intersection contains a sector.

Following \cite[Ch.\ $8$]{We09} we can attach to $\Delta$ a spherical building $\Delta_{\infty}$ of type $X_n$ \emph{at infinity} (with respect to the complete system of apartments) whose chambers are parallel classes of sectors. For an apartment $\Sigma$ of $\Delta$, the set $\Sigma_{\infty}$ of chambers of $\Delta_{\infty}$ consisting of all parallel classes of sectors in $\Sigma$ forms an apartment of the building $\Delta_{\infty}$ (cf.\ \cite[Proposition $8.22$]{We09}). Moreover, for each root $\gamma \subseteq \Sigma$, the set $\gamma_{\infty}$ of chambers of $\Delta_{\infty}$ consisting of all parallel classes of sectors in $\gamma$ forms a root of $\Sigma_{\infty}$ (cf.\ \cite[Proposition $8.30(i)$]{We09}). By construction of $\Delta_{\infty}$ every automorphism of $\Delta$ induces an automorphism on $\Delta_{\infty}$. For more information we refer to \cite{We09}.

\begin{lemma}\label{Lemma:fixatorofarootfixesrootatinfty}
	Let $\Delta$ be a building of type $\tilde{X}_n$ and let $\Delta_{\infty}$ be the corresponding spherical building at infinity. Let $\gamma$ be a root of $\Delta$ and let $\gamma_{\infty}$ be the corresponding root of $\Delta_{\infty}$. Then $\Fix(\gamma) \subseteq \Fix(\gamma_{\infty})$.
\end{lemma}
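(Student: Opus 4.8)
The plan is to show that any special automorphism $g$ fixing the root $\gamma$ pointwise must fix each chamber of $\Delta_\infty$ lying in $\gamma_\infty$, i.e.\ must preserve every parallel class of sectors contained in $\gamma$. First I would recall that by the discussion preceding the lemma, $\gamma_\infty$ is precisely the set of parallel classes of sectors contained in $\gamma$; so it suffices to fix an arbitrary sector $\sigma$ of $\Delta$ with $\sigma \subseteq \gamma$ and show that $g(\sigma)$ is parallel to $\sigma$. Since $g$ is a building automorphism, $g(\sigma)$ is again a sector, and $g(\sigma) \subseteq g(\gamma) = \gamma$.

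The key step is to produce a common subsector of $\sigma$ and $g(\sigma)$. Because $\sigma \subseteq \gamma$, pick an apartment $\Sigma$ with $\sigma \subseteq \Sigma$ and write $\sigma = \sigma(R,c)$ for a gem $R$ of $\Sigma$ and $c \in R$; moreover $\gamma \cap \Sigma$ is a root of $\Sigma$ (I would arrange $\gamma \subseteq \Sigma$ by choosing $\Sigma$ appropriately, using that $\gamma$ is convex and that apartments containing a given root exist, or by working with $\gamma$ a root of $\Delta$ in the sense of being $\id$-isomorphic to a simple root). By Proposition~\ref{Prop:Sectorasconvexhull} we have $\sigma = \conv(\{t_{R,c}^n(c) \mid n \in \Nb\})$. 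Now the point is that the translation $t := t_{R,c}$ eventually pushes $c$ (hence all of $R$, by Lemma~\ref{Lemma:invariantsector}) strictly inside $\gamma$: since $c \in R \cap \gamma$, Lemma~\ref{Lemma: alpha_R subseteq alpha} gives $\gamma_R \subseteq \gamma$, and $t(R) \subseteq \sigma(R,c) \subseteq \gamma_R$ by Lemma~\ref{Lemma:invariantsector} together with Lemma~\ref{Lemma:Prop4.11}; iterating, $t^n(R) \subseteq \gamma$ for all $n \ge 1$, and in fact $t^n(R)$ moves arbitrarily deep into $\gamma$. Consequently there is $N$ such that $t^N(R) \subseteq \gamma$ lies in the interior, so that $g$ — which fixes $\gamma$ pointwise — fixes $t^N(c)$ and even fixes the gem $t^N(R)$ pointwise (apply Lemma~\ref{Lemma:gfixesXimpliesgfixesconv(X)}, or directly: $g$ fixes every chamber of $\gamma$).

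From here I would finish as follows: the subsector $\sigma' := \sigma(t^N(R), t^N(c)) = t^N(\sigma(R,c))$ is contained in $\sigma$ and is fixed pointwise by $g$, because $\sigma' \subseteq \gamma$ (by Lemma~\ref{Lemma:invariantsector}, $\sigma' = t^N(\sigma(R,c)) \subseteq \sigma(R,c) \subseteq \gamma$) and $g$ fixes $\gamma$ pointwise. Hence $g(\sigma') = \sigma'$, so $\sigma' \subseteq \sigma \cap g(\sigma)$ is a common subsector, proving $\sigma$ and $g(\sigma)$ are parallel. Therefore $g$ fixes the chamber of $\Delta_\infty$ determined by $\sigma$. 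As $\sigma \subseteq \gamma$ was arbitrary, $g$ fixes $\gamma_\infty$ pointwise, i.e.\ $g \in \Fix(\gamma_\infty)$; since also $\Fix(\gamma) \subseteq \Aut_0(\Delta)$ (by \cite[Example~3.128]{AB08} when $(W,S)$ has irreducible spherical rank $\ge 2$, which holds for $\tilde X_n$ with $n \ge 2$; the rank-$\tilde A_1$ case is elementary), the inclusion $\Fix(\gamma) \subseteq \Fix(\gamma_\infty)$ follows.

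I expect the main obstacle to be the bookkeeping around "arbitrarily deep": one must verify that $\{t^n(c) \mid n \in \Nb\}$ is cofinal in $\sigma$ in the sense that every panel of $\Delta_\infty$ separating $\sigma$ from anything is already "seen" by some $t^n(R) \subseteq \gamma$, so that fixing $\gamma$ pointwise genuinely forces $g$ to fix a full subsector rather than just a ray. The cleanest route is to avoid this entirely: simply note $\sigma' := t^N(\sigma) \subseteq \sigma \cap \gamma$ is itself a sector (same parallel class as $\sigma$, by Lemma~\ref{Lemma:invariantsector}), it is fixed pointwise by $g$ since $g$ fixes $\gamma$ pointwise, and a sector fixed pointwise by $g$ certainly has its parallel class fixed by $g$ — and that parallel class is the chamber of $\Delta_\infty$ we wanted. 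This sidesteps the cofinality subtlety, reducing the whole argument to: each sector in $\gamma$ has a subsector in $\gamma$ that $g$ fixes pointwise, which is immediate once one observes $t^N(\sigma) \subseteq \sigma \subseteq \gamma$ for $\sigma = \sigma(R,c)$ with $c \in R \cap \gamma$.
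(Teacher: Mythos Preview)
Your argument is correct, but it reproduces the paper's one-line proof with a long and unnecessary detour. You begin with a sector $\sigma \subseteq \gamma$; since $g$ fixes $\gamma$ \emph{pointwise} and $\sigma \subseteq \gamma$, the automorphism $g$ already fixes $\sigma$ pointwise, so $g(\sigma) = \sigma$ and the parallel class of $\sigma$ is fixed --- this is exactly the paper's proof. All of your machinery involving translations $t_{R,c}$, subsectors $t^N(\sigma)$, gems, Lemma~\ref{Lemma:invariantsector}, Lemma~\ref{Lemma: alpha_R subseteq alpha}, and the ``arbitrarily deep'' discussion is superfluous: in the end you apply the implication ``$X \subseteq \gamma$ and $g \in \Fix(\gamma)$ $\Rightarrow$ $g$ fixes $X$ pointwise'' to the subsector $\sigma' = t^N(\sigma)$, but you could (and should) have applied it directly to $\sigma$ itself in the first paragraph. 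The closing remark about $\Fix(\gamma) \subseteq \Aut_0(\Delta)$ is likewise unnecessary; neither the lemma nor its proof requires the automorphisms to be special.
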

\begin{proof}
	Let $g \in \Fix(\gamma)$ and let $\sigma \in \gamma_{\infty}$ be a parallel class of sectors. By definition, there exists a sector $S \in \sigma$ which is contained in $\gamma$. Since $g$ fixes $\gamma$ pointwise, the induced map on the building $\Delta_{\infty}$ fixes $\sigma$. Thus $g$ fixes any chamber of the root $\gamma_{\infty}$.
\end{proof}

\subsection{Bruhat--Tits buildings}\label{Subsection: Bruhat--Tits buildings}

A \emph{Bruhat--Tits building} is an affine building $\Delta$ of type $\tilde{X}_n$ with $n \geq 2$ whose spherical building $\Delta_{\infty}$ at infinity of type $X_n$ is a Moufang building. Since all thick, irreducible spherical buildings of rank at least three are Moufang buildings it follows that all thick, irreducible, affine buildings of rank at least four are Bruhat--Tits buildings (cf.\ \cite[Proposition $11.3$]{We09} and \cite[Theorem $11.6$]{We03}).

\begin{remark}
	Not all thick, irreducible affine buildings of rank $3$ (i.e.\ of type $\tilde{A}_2$, $\tilde{B}_2$ or $\tilde{G}_2$) are Bruhat--Tits buildings. Affine buildings whose building at infinity is not a Moufang building are called \emph{exotic}.
\end{remark}

Let $\Delta = (\mc{C}, \delta)$ be a Bruhat--Tits building of type $\tilde{X}_n$. Let $\alpha$ be a root of $\Delta_{\infty}$ and let $U_{\alpha} \leq \Aut_0(\Delta_{\infty})$ be the corresponding root group. For $G^{\dagger} := \langle U_{\alpha} \mid \alpha \text{ root of } \Delta_{\infty} \rangle \leq \Aut_0(\Delta_{\infty})$ we have $G^{\dagger} \leq \Aut_0(\Delta)$ (cf.\ \cite[Proposition~13.2]{We09}).

Let $\Sigma \subseteq \mc{C}$ be an apartment. Let $(W_{\infty}, S_{\infty})$ be the Coxeter system of type $X_n$ and let $\Phi_{\infty} := \Phi(W_{\infty}, S_{\infty})$. For $a\in \Phi_{\infty}$ a root of $\Sigma_{\infty}$ and $u \in U_a^* \leq \Aut_0(\Delta)$ we let $a_u := \Sigma \cap u(\Sigma) \subseteq \mc{C}$. By \cite[Proposition~13.2]{We09} $a_u$ is a root of $\Sigma$ with $(a_u)_{\infty} = a$.

Let $R$ be a residue of $\Delta$ of type $X_n$ which cuts $\Sigma$. In particular, $R\cap \Sigma$ is a gem of $\Sigma$. For each root $a\in \Phi_{\infty}$ of $\Sigma_{\infty}$, we let $a_R$ be the unique root of $\Sigma$ with $(a_R)_{\infty} = a$ and $a_R \in \Phi_{R\cap \Sigma}$ (cf.\ \cite[Proposition~1.18 and Proposition~8.30(ii)]{We09}). By \cite[Proposition $1.3$]{We09} there exists for each root $\alpha \in \Phi$ a bijection $i \mapsto \alpha_i$ from $\Zb$ to the parallel class of $\alpha$ such that $\alpha_0 = \alpha$ and $\alpha_i \subsetneq \alpha_{i+1}$. We put $\mathrm{dist}(\alpha_i, \alpha_j) := j-i$ and define
\[ \phi_a: U_a^* \to \Zb, u \mapsto \dist(a_R, a_u). \]
This mapping is surjective by \cite[Proposition~$13.5$]{We09}. We extend $\phi_a$ to $U_a$ by defining $\phi_a(1) := \infty$ and we set for each $k \in \Zb$
\[ U_{a, k} := \{ u\in U_a \mid \phi_a(u) \geq k \}. \]

\begin{proposition}\label{Prop:stabilizerofaroot}
	Let $\Delta$ be a Bruhat--Tits building of type $\tilde{X}_n$, let $G^{\dagger} \leq F \leq \Aut(\Delta)$, let $\Sigma$ be an apartment of $\Delta$, let $\gamma \subseteq \Sigma$ be a root and let $R$ be a residue of type $X_n$ which cuts $\Sigma$. Let $k = \dist((\gamma_{\infty})_R, \gamma)$. Then $\Fix_F(\gamma) = U_{\gamma_{\infty}, k} \Fix_F(\Sigma)$.
\end{proposition}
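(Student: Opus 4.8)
The plan is to prove the two inclusions of $\Fix_F(\gamma) = U_{\gamma_\infty,k}\,\Fix_F(\Sigma)$ separately. Write $a := \gamma_\infty$, so that $(\gamma_\infty)_R = a_R$ and $k = \dist(a_R,\gamma)$; for $1 \neq u \in U_a$ put $a_u := \Sigma \cap u(\Sigma)$, which by \cite[Proposition~13.2]{We09} is a root of $\Sigma$ with $(a_u)_\infty = a$, so that $a_R$, $\gamma$ and $a_u$ all have the same root at infinity and therefore lie in one parallel class, along which $\dist(a_R,\cdot)$ is strictly increasing with respect to inclusion. The inclusion $\supseteq$ reduces to $U_{a,k} \subseteq \Fix_F(\gamma)$, since $\Fix_F(\Sigma) \subseteq \Fix_F(\gamma)$ is clear from $\gamma \subseteq \Sigma$: for $1 \neq u \in U_{a,k}$ one has $\phi_a(u) = \dist(a_R,a_u) \geq k = \dist(a_R,\gamma)$, hence $\gamma \subseteq a_u$; since $u$ fixes $a_u$ pointwise (this is the geometric meaning of the filtration $U_{a,k}$; cf.\ \cite{We09}) it fixes $\gamma$ pointwise, and $u \in U_{a,k} \subseteq G^\dagger \subseteq F$ then gives $u \in \Fix_F(\gamma)$. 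Thus both factors of $U_{\gamma_\infty,k}\,\Fix_F(\Sigma)$ lie in the subgroup $\Fix_F(\gamma)$.

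For the inclusion $\subseteq$, let $g \in \Fix_F(\gamma)$. Exactly as in the proof of Lemma~\ref{Lemma:fixatorofaroot}, $\Delta$ is irreducible of rank $\geq 2$ and $g$ fixes the root $\gamma$ pointwise, so $g \in \Aut_0(\Delta)$ by \cite[Example~3.128]{AB08}. From $g(\gamma) = \gamma$ we get $\gamma \subseteq \Sigma \cap g(\Sigma)$, and hence $a = \gamma_\infty \subseteq (g\Sigma)_\infty$; of course $a \subseteq \Sigma_\infty$ as well. Now $\Delta_\infty$ is a thick Moufang building of irreducible spherical type $X_n$ with $n \geq 2$ and $a$ is a root of it, so the root group $U_a$ acts transitively on the apartments of $\Delta_\infty$ containing $a$; choose $v \in U_a$ with $v(\Sigma_\infty) = (g\Sigma)_\infty$. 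Since $\Delta$ is Bruhat--Tits, $v \in G^\dagger \subseteq \Aut_0(\Delta)$, so $v$ acts on $\Delta$, and $(v\Sigma)_\infty = v(\Sigma_\infty) = (g\Sigma)_\infty$.

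The decisive point is that an apartment of $\Delta$ is uniquely determined by its apartment at infinity (a standard fact for Bruhat--Tits buildings taken with the complete apartment system; cf.\ \cite{We09}), so $v(\Sigma) = g(\Sigma)$. Consequently $a_v := \Sigma \cap v(\Sigma) = \Sigma \cap g(\Sigma) \supseteq \gamma$, and the parallel-class argument from the first inclusion yields $\phi_a(v) \geq k$, i.e.\ $v \in U_{a,k}$ (trivially so if $v = 1$). Finally $v^{-1}g \in F$ stabilizes $\Sigma$, because $v^{-1}g(\Sigma) = v^{-1}(v(\Sigma)) = \Sigma$, and it fixes $\gamma$ pointwise, because both $g$ and $v \in U_{a,k} \subseteq \Fix(\gamma)$ do; as $\gamma \neq \emptyset$ and $v^{-1}g \in \Aut_0(\Delta)$, Lemma~\ref{Lemma:gfixesXimpliesgfixesconv(X)}(a) gives $v^{-1}g \in \Fix_F(\Sigma)$. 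Therefore $g = v\,(v^{-1}g) \in U_{\gamma_\infty,k}\,\Fix_F(\Sigma)$, as required.

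I expect the main work to lie in two inputs from Weiss's theory that I would have to pin down precisely in \cite{We09}: that every $u \in U_a$ fixes the whole root $a_u = \Sigma \cap u(\Sigma)$ pointwise (used for $\supseteq$), and that the passage-to-infinity map is injective on apartments of $\Delta$ (used to upgrade the equality of apartments at infinity to $v(\Sigma) = g(\Sigma)$). The genuinely sharp feature of the statement — that one lands in the filtration subgroup $U_{\gamma_\infty,k}$ and not merely in $U_{\gamma_\infty}$ — is then automatic from the identity $\Sigma \cap v(\Sigma) = \Sigma \cap g(\Sigma) \supseteq \gamma$, which is the observation that makes the level $k$ come out correctly.
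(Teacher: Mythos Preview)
Your proof is correct and follows the same overall strategy as the paper's: use the Moufang property of $\Delta_\infty$ to produce an element of $U_{\gamma_\infty}$, invoke the bijection between apartments of $\Delta$ and of $\Delta_\infty$ (the paper cites \cite[Proposition~8.27]{We09} for this) to descend to $\Delta$, and finish with Lemma~\ref{Lemma:gfixesXimpliesgfixesconv(X)}. Where you differ is in the bookkeeping. The paper first passes through Lemma~\ref{Lemma:fixatorofarootfixesrootatinfty} and Lemma~\ref{Lemma:fixatorofaroot} to write $g = uh$ with $u \in U_{\gamma_\infty}$ and $h \in \Fix(\Sigma_\infty)$, shows $h \in \Fix_F(\Sigma)$, and only afterwards recovers the filtration level via $u = gh^{-1} \in \Fix_F(\gamma) \cap U_{\gamma_\infty} = U_{\gamma_\infty, k}$, citing \cite[Notation~13.17]{We09}. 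You instead pick $v \in U_a$ with $v(\Sigma_\infty) = (g\Sigma)_\infty$ directly from Moufang transitivity and observe that $a_v = \Sigma \cap v(\Sigma) = \Sigma \cap g(\Sigma) \supseteq \gamma$ forces $\phi_a(v) \geq k$ immediately; this bypasses both auxiliary lemmas and makes the appearance of the level $k$ transparent. The two inputs you flag as needing precise references are exactly what the paper uses: that $u \in U_a^*$ fixes $a_u$ pointwise is part of \cite[Proposition~13.2]{We09} (and the identification $\Fix_{U_a}(\gamma) = U_{a,k}$ is \cite[Notation~13.17]{We09}), and the injectivity of $\Sigma \mapsto \Sigma_\infty$ is \cite[Proposition~8.27]{We09}.
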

\begin{proof}
	Clearly, we have $U_{\gamma_{\infty}, k}, \Fix_F(\Sigma) \leq \Fix_F(\gamma)$. For the reverse inclusion we note that $\Fix_F(\gamma) \subseteq \Fix(\gamma) \subseteq \Fix(\gamma_{\infty})$ by Lemma \ref{Lemma:fixatorofarootfixesrootatinfty}. Using Lemma~\ref{Lemma:fixatorofaroot} we obtain $\Fix_F(\gamma) \subseteq U_{\gamma_{\infty}} H \cap F$, where $H := \Fix(\Sigma_{\infty})$. Now let $g \in \Fix_F(\gamma)$. Then there exist $u \in U_{\gamma_{\infty}}$ and $h\in H$ with $g = uh$. The element $u^{-1}g$ fixes a root in $\Sigma$ parallel to $\gamma$, say $\gamma'$, by \cite[Proposition~13.2]{We09}. In particular, $u^{-1}g \in \Aut_0(\Delta)$. Now $h = u\inv g \in H \cap F$ and $h(\Sigma_{\infty}) = \Sigma_{\infty}$. We also have $h(\Sigma)_{\infty} = h(\Sigma_{\infty}) = \Sigma_{\infty}$ (as $h(\sigma)_{\infty} = h(\sigma_{\infty})$, where $\sigma_{\infty}$ denotes the equivalence class of the sector $\sigma$). But then $h(\Sigma)_{\infty} = \Sigma_{\infty}$ and \cite[Proposition~8.27]{We09} now implies $h(\Sigma) = \Sigma$. As $h = u^{-1}g \in \Stab_F(\Sigma) \cap \Aut_0(\Delta) \cap \Fix(\gamma')$. Now Lemma~\ref{Lemma:gfixesXimpliesgfixesconv(X)} implies $u^{-1} g \in \Fix_F(\Sigma)$ and hence $g = uh \in U_{\gamma_{\infty}} \Fix_F(\Sigma)$.
	
	We conclude that $u = gh^{-1} \in \Fix_F(\gamma)$. Now it follows from \cite[Notation $13.17$]{We09} that $\Fix_F(\gamma) \cap U_{\gamma_{\infty}} = \Fix_{U_{\gamma_{\infty}}}(\gamma) = U_{\gamma_{\infty}, k}$.
\end{proof}

\subsection{Locally finite Bruhat--Tits buildings}

\subsubsection*{Setting}

In this subsection we let $\Delta = (\mc{C}, \delta)$ be a thick locally finite Bruhat--Tits building of type $\tilde{X}_n$. Let $\Delta_{\infty}$ be the Moufang building of type $X_n$ at infinity and let $G^{\dagger} := \langle U_{\alpha} \mid \alpha \text{ root of } \Delta_{\infty} \rangle \leq \Aut_0(\Delta_{\infty})$. We have already discussed that $G^{\dagger} \leq \Aut_0(\Delta)$.

Let $F \leq \Aut(\Delta)$ be a closed subgroup containing $G^{\dagger}$. By \cite[Proposition $18.4$]{We09} and \cite[Corollary $6.12$]{AB08}, the group $F$ is Weyl-transitive. Let $\Sigma$ be an apartment of $\Delta$ and let $c\in \Sigma$. We let $H := \{ g\in \Stab_F(\Sigma) \mid g \vert_{\Sigma} \text{ translation} \} \leq F$. Moreover, we let $\fl{H} := \{ \gamma_g \mid g\in H \} \leq \Aut(F)$, where $\gamma_g: F\to F, x \mapsto gxg^{-1}$. By Corollary~\ref{Cor:tidysubgroupfortranslation}, $U := F_c \cap \Aut_0(\Delta)$ is tidy for $\fl{H}$ and $\fl{H}$ is flat.

\subsubsection*{Decomposition of $U$ into $U$-eigenfactors of $\fl{H}$}

Now we will decompose the tidy subgroup $U$ into a finite product of $U$-eigenfactors of $\fl{H}$ as in Lemma~\ref{Lemma: flat U decomposition into eigenfactors}. Let $N := \Stab_F(\Sigma) \cap \Aut_0(\Delta)$. Note that by \cite[2.8]{We03} we have $\Aut_0(\Sigma) = W$ (where $(W, S)$ is the type of $\Delta$). As $G^{\dagger} \leq F$, \cite[Proposition~18.3(ii)]{We09} implies $W \leq N \vert_{\Sigma}$. Let $R$ be a gem of $\Sigma$ and let $d\in R$. Recall from Subsection \ref{Subsection: Sectors} that we have defined the special translation
\[ t_{R, d} := \prod_{\alpha \in [R, d]} t_{R, d, \alpha} \in \Aut_0(\Sigma) = W. \]
Now we choose for each $\alpha \in [R, d]$ an element $f_{R, d, \alpha} \in N$ with $f_{R, d, \alpha} \vert_{\Sigma} = t_{R, d, \alpha} \in \Aut_0(\Sigma)$. We note that $f_{R, d, \alpha}$ need not be unique. We want to define $f_{R, d}$ similarly as $t_{R, d}$, but the elements $f_{R, d, \alpha}$ may not commute. Thus we have to fix an order on the set of roots $[R, d]$ in advance. Using this order, we define
\[ f_{R, d} := \prod_{\alpha \in [R, d]} f_{R, d, \alpha} \in N. \]

\begin{remark}\label{Remark: fRd special translation}
	As $f_{R, d, \alpha} \vert_{\Sigma} = t_{R, d, \alpha}$, we deduce that $f_{R, d, \alpha} \vert_{\Sigma}$ is a special translation. In particular, $f_{R, d} \vert_{\Sigma}$ is a special translation. Moreover, we have $f_{R, d} \vert_{\Sigma} = t_{R, d}$.
\end{remark}

We abbreviate $f_d := f_{R, d}$ for all $d\in R$. Since $f_d \in H$ we obtain $\gamma_{(f_d)} \in \fl{H}$. 

\begin{convention}\label{Convention: gamma_i}
	For the rest of this section we let $R$ be a gem of $\Sigma$ containing the already chosen chamber $c$, let $d$ be the unique chamber in $R$ opposite to $c$, and let $(c_0 = c, \ldots, c_k = d)$ be a minimal gallery. We abbreviate $\gamma_i := \gamma_{(f_{c_i})}$.
\end{convention}

\begin{definition}
	For any root $\gamma \in \Phi_R$ we define $\gamma' := \conv(\gamma \cup \{c\})$. Recall that $\gamma'$ is a root of $\Sigma$ which is parallel to $\gamma$.
\end{definition}

\begin{lemma}\label{Lemma: U gamma is fixator of apartment or root}
	\begin{enumerate}[label=(\alph*)]
		\item For all $\epsilon_1, \ldots, \epsilon_k \in \{\pm 1\}$ the following hold:
		\[ U_{\{\gamma_1^{\epsilon_1}, \ldots, \gamma_k^{\epsilon_k}\}} \in \{ \Fix_F(\Sigma), \Fix_F(\gamma') \mid \gamma \in \Phi_R \}. \]
		
		\item For each $\gamma \in \Phi_R$ there exist $\epsilon_1, \ldots, \epsilon_k \in \{\pm 1\}$ such that $U_{\{\gamma_1^{\epsilon_1}, \ldots, \gamma_k^{\epsilon_k}\}} = \Fix_F(\gamma')$.
	\end{enumerate}
\end{lemma}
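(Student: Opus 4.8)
The plan is to turn the group-theoretic quantity $U_{\{\gamma_1^{\epsilon_1},\dots,\gamma_k^{\epsilon_k}\}}$ into the fixator of a convex subset of $\Sigma$ and then read off that subset from the combinatorics of the gem $R$, applying Lemma~\ref{Lemma:convexhullisroot}.

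\emph{Reduction step.} For $e\in R$ write $\bar e$ for the unique chamber of $R$ opposite $e$. Fix $\epsilon=(\epsilon_1,\dots,\epsilon_k)\in\{\pm1\}^k$, put $e_i:=c_i$ if $\epsilon_i=1$ and $e_i:=\bar c_i$ if $\epsilon_i=-1$, and set $C_\epsilon:=\{e_1,\dots,e_k\}\subseteq R$. First I would compute, for each $i$,
\[
U_{(\gamma_i^{\epsilon_i})+}=\bigcap_{n\ge 0} f_{c_i}^{\,\epsilon_i n}\,U\,f_{c_i}^{\,-\epsilon_i n}
=\Fix_F\!\big(\{\,t_{R,e_i}^{\,n}(c)\mid n\in\Nb\,\}\big)\cap\Aut_0(\Delta),
\]
using $\gamma_i=\gamma_{(f_{c_i})}$ with $f_{c_i}\in N\le\Aut_0(\Delta)$ (so conjugating $U=F_c\cap\Aut_0(\Delta)$ by $f_{c_i}^{\,m}$ gives $F_{f_{c_i}^{\,m}(c)}\cap\Aut_0(\Delta)$), then $f_{c_i}\vert_\Sigma=t_{R,c_i}$ (Remark~\ref{Remark: fRd special translation}) and $t_{R,c_i}^{-1}=t_{R,\bar c_i}$ (Lemma~\ref{Lemma:oppositeinversetranslation}). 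Intersecting over $i$ and applying Lemma~\ref{Lemma:gfixesXimpliesgfixesconv(X)}(b) to the non-empty subset $\{\,t_{R,e}^{\,n}(c)\mid e\in C_\epsilon,\ n\in\Nb\,\}\subseteq\Sigma$ then yields
\[
U_{\{\gamma_1^{\epsilon_1},\dots,\gamma_k^{\epsilon_k}\}}=\Fix_F(X_\epsilon)\cap\Aut_0(\Delta),\qquad
X_\epsilon:=\conv\!\big(\{\,t_{R,e}^{\,n}(c)\mid e\in C_\epsilon,\ n\in\Nb\,\}\big),
\]
which is exactly the set $X$ of Lemma~\ref{Lemma:convexhullisroot} for $C=C_\epsilon$.

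\emph{Combinatorics of $C_\epsilon$ in $R$.} Since $c$ and $d=c_k$ are opposite in $R$, every wall $\partial\alpha$ with $\alpha\in\Phi_R$ separates them (there are $\ell(r_J)=k$ such walls, and the minimal gallery $(c_0,\dots,c_k)$ crosses $k$ distinct walls by Lemma~\ref{Lemma: minimal gallery crossing roots}, hence all of them). For $\beta\in\Phi_R$ with $c\in\beta$, let $m=m(\beta)$ be the step at which $(c_0,\dots,c_k)$ crosses $\partial\beta$; then $c_i\in\beta\iff i\le m-1$, and since $c_i$ and $\bar c_i$ are opposite in $R$ they too are separated by $\partial\beta$, so $\bar c_i\in\beta\iff i\ge m$. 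Hence $C_\epsilon\subseteq\beta$ precisely when $\epsilon_i=1$ for $i<m$ and $\epsilon_i=-1$ for $i\ge m$, and running the same argument with $-\beta$ shows $C_\epsilon\subseteq(-\beta)$ precisely when $\epsilon_i=-1$ for $i<m$ and $\epsilon_i=1$ for $i\ge m$. Thus $C_\epsilon$ lies in a root of $\Phi_R$ if and only if $\epsilon$ is one of the $2k$ ``monotone'' patterns $(+^{\,m-1},-^{\,k-m+1})$ or $(-^{\,m-1},+^{\,k-m+1})$, $m\in\{1,\dots,k\}$; these are pairwise distinct, they are in bijection with $\Phi_R$ (which has $2k$ elements), and for a monotone $\epsilon$ there is a \emph{unique} root $\gamma=\gamma_\epsilon\in\Phi_R$ with $C_\epsilon\subseteq\gamma_\epsilon$.

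\emph{Identifying $X_\epsilon$ and concluding.} If $\epsilon$ is not monotone then $C_\epsilon$ lies in no root of $\Phi_R$, so $X_\epsilon=\Sigma$ by Lemma~\ref{Lemma:convexhullisroot}(b). If $\epsilon$ is monotone with associated root $\gamma=\gamma_\epsilon$, then Lemma~\ref{Lemma:convexhullisroot}(a) together with Lemma~\ref{Lemma:invariantsector} gives $X_\epsilon=\conv\!\big(\{c\}\cup\bigcup_{e\in C_\epsilon}\sigma(R,e)\big)$, and Lemma~\ref{Lemma:convexhullofsectors} together with the uniqueness of $\gamma$ yields $\conv\!\big(\bigcup_{e\in C_\epsilon}\sigma(R,e)\big)=\bigcup_{e\in\conv(C_\epsilon)}\sigma(R,e)=\bigcap_{\alpha\in\Phi_R,\,C_\epsilon\subseteq\alpha}\alpha=\gamma$, whence $X_\epsilon=\conv(\{c\}\cup\gamma)=\gamma'$. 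As $(W,S)$ is irreducible of rank $\ge 2$, every automorphism of $\Delta$ fixing a root of $\Sigma$ pointwise is special (as in the proof of Lemma~\ref{Lemma:fixatorofaroot}), so $\Fix_F(\gamma')\le\Aut_0(\Delta)$, and hence also $\Fix_F(\Sigma)\le\Aut_0(\Delta)$. Therefore $U_{\{\gamma_1^{\epsilon_1},\dots,\gamma_k^{\epsilon_k}\}}$ equals $\Fix_F(\Sigma)$ when $\epsilon$ is not monotone and $\Fix_F(\gamma')$ when $\epsilon$ is monotone, which is (a); for (b) I would, given $\gamma\in\Phi_R$, take the unique monotone $\epsilon$ with $\gamma_\epsilon=\gamma$.

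\emph{Main obstacle.} The bookkeeping in the reduction step and the final appeal to Lemma~\ref{Lemma:convexhullisroot} are routine. The crux is the combinatorial middle step: pinning down \emph{exactly} which sign patterns $\epsilon$ make $C_\epsilon$ sit inside a root of $\Phi_R$, and verifying that these ``monotone'' patterns number precisely $2k=|\Phi_R|$ and match $\Phi_R$ bijectively, so that no choice of $\epsilon$ produces a convex hull strictly between a root $\gamma'$ and all of $\Sigma$. This rests on the facts that a minimal gallery between opposite chambers of $R$ crosses every wall of $\Phi_R$ exactly once, and that opposite chambers of $R$ are separated by every such wall.
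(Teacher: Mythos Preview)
Your proof is correct and follows the same overall strategy as the paper: reduce $U_{\{\gamma_1^{\epsilon_1},\dots,\gamma_k^{\epsilon_k}\}}$ to the fixator of $X_\epsilon=\conv(\{t_{R,e}^{\,n}(c)\mid e\in C_\epsilon,\ n\in\Nb\})$ via Lemma~\ref{Lemma:oppositeinversetranslation} and Lemma~\ref{Lemma:gfixesXimpliesgfixesconv(X)}, then identify $X_\epsilon$ using Lemma~\ref{Lemma:convexhullisroot}. The reduction step and the construction in part~(b) match the paper almost verbatim.

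The one place you diverge is the case $C_\epsilon\subseteq\gamma$ for some $\gamma\in\Phi_R$. The paper simply invokes \cite[Proposition~29.18]{We09} to conclude $C_\epsilon = R\cap\gamma$ and then applies Lemma~\ref{Lemma:convexhullisroot}(c) directly. You instead carry out the explicit wall-crossing analysis of ``monotone'' sign patterns, establish that the $\gamma$ containing $C_\epsilon$ is unique, and rederive $X_\epsilon=\gamma'$ via Lemma~\ref{Lemma:convexhullofsectors} and Lemma~\ref{Lemma:convexhullisroot}(a). Your route is self-contained (no external reference) and yields the bonus that $\epsilon\mapsto\gamma_\epsilon$ is an explicit bijection between the $2k$ monotone patterns and $\Phi_R$; the paper's route is shorter. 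Either way the argument is sound, and what you flag as the ``main obstacle'' dissolves once one has either \cite[Proposition~29.18]{We09} or your wall-crossing count (both rest on the fact that a minimal gallery between opposite chambers in $R$ crosses each of the $k=\ell(r_J)$ walls of $\Phi_R$ exactly once).
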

\begin{proof}
	By definition we have $U_{\left\{ \gamma_1^{\epsilon_1}, \ldots, \gamma_k^{\epsilon_k} \right\}} = \bigcap_{i=1}^k U_{\gamma_i^{\epsilon_i} +} = \Aut_0(\Delta) \cap \bigcap_{i=1}^k (F_c)_{\gamma_i^{\epsilon_i} +}$. For each $1 \leq i \leq k$ we let $e_i \in \Sigma$ be the unique chamber in $R$ which is opposite to $c_i$. We define
	\[ d_i := \begin{cases}
		c_i, & \epsilon_i = 1, \\
		e_i, & \epsilon_i = -1,
	\end{cases} \qquad \text{and} \qquad \delta_i := \gamma_{(f_{d_i})}. \]
	Note that $\delta_i = \gamma_i$ if $\epsilon_i = 1$. Now Lemma~\ref{Lemma:oppositeinversetranslation} implies for $\epsilon_i = -1$:
	\allowdisplaybreaks
	\begin{align*}
		(F_c)_{\gamma_i^{-1} +} &= \bigcap_{n \geq 0} \left(\gamma\inv_{(f_{c_i})}\right)^n(F_c) = \bigcap_{n \geq 0} \left(\gamma_{(f_{c_i}^{-1})}\right)^n(F_c) = \bigcap_{n\geq 0} F_{f_{c_i}^{-n}(c)} \\
		&\overset{\ref{Lemma:oppositeinversetranslation}}{=} \bigcap_{n\geq 0} F_{f_{d_i}^n(c)} = \bigcap_{n \geq 0} \left(\gamma_{(f_{d_i})}\right)^n(F_c) = (F_c)_{\gamma_{(f_{d_i})} +}.
	\end{align*}
	This yields $U_{\left\{ \gamma_1^{\epsilon_1}, \ldots, \gamma_k^{\epsilon_k} \right\}} = U_{\{ \delta_1, \ldots, \delta_k\} }$. For each $1 \leq i \leq k$ the group $(F_c)_{\delta_i +}$ is the pointwise stabilizer of the chambers $f_{d_i}^n(c) = t_{R, d_i}^n(c)$ (see Remark~\ref{Remark: fRd special translation}) for all $n\in \Nb$. By Lemma \ref{Lemma:gfixesXimpliesgfixesconv(X)} we have
	\[ U_{\{\delta_1, \ldots, \delta_k\}} = \Fix_F( \conv( \{ t_{R, d_i}^n(c) \mid n \in \Nb, \, 1 \leq i \leq k \} ) ) \cap \Aut_0(\Delta). \]
	For $C := \{ d_1, \ldots, d_k \} \subseteq R$ we have the following two possibilities.
	\begin{itemize}[label=$\bullet$]
		\item For each $\gamma \in \Phi_R$ we have $C \not\subseteq \gamma$. Then Lemma \ref{Lemma:convexhullisroot}$(b)$ yields $U_{\{\delta_1, \ldots, \delta_k\}} = \Fix_F(\Sigma) \cap \Aut_0(\Delta) = \Fix_F(\Sigma)$.
		
		\item There exists $\gamma \in \Phi_R$ with $C \subseteq \gamma$. In this case we have $C \subseteq R \cap \gamma$ and \cite[Proposition~29.18]{We09} yields $C = R \cap \gamma$. Then we have $U_{\{\delta_1, \ldots, \delta_k\}} = \Fix_F(\gamma') \cap \Aut_0(\Delta) = \Fix_F(\gamma')$ by Lemma \ref{Lemma:convexhullisroot}$(c)$, where $\gamma' := \conv(\gamma \cup \{c\})$.
	\end{itemize}
	This shows (a). For (b) let $\gamma \in \Phi_R$. For each $1 \leq i \leq k$ define
	\[ \epsilon_i := \begin{cases}
		1, & c_i \in \gamma, \\
		-1, & c_i \notin \gamma.
	\end{cases} \]
	With the notation as above, we get $C = \{d_1, \ldots, d_k\} \subseteq \gamma$, hence $C = \gamma \cap R$. It follows $U_{\{\gamma_1^{\epsilon_1}, \ldots, \gamma_k^{\epsilon_k}\}} = \Fix_F(\gamma')$.
\end{proof}

\begin{theorem}\label{Theorem:Eigenfactorfixesatleastaroot}
	For all $\epsilon_1, \ldots, \epsilon_k \in \{\pm 1\}$ the subgroup $U_{\left\{ \gamma_1^{\epsilon_1}, \ldots, \gamma_k^{\epsilon_k} \right\}}$ is a $U$-eigenfactor of $\fl{H}$.
\end{theorem}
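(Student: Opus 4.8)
The plan is to reduce the statement to the nesting criterion of Lemma~\ref{Lemma: U-eigenfactors} and then read off everything from the structural description in Lemma~\ref{Lemma: U gamma is fixator of apartment or root}. Write $\mathbf{a} := \{\gamma_1^{\epsilon_1},\dots,\gamma_k^{\epsilon_k}\}$; this is a subset of $\fl{H}$ since each $\gamma_i$ lies in $\fl{H}$ (see Convention~\ref{Convention: gamma_i}) and $\fl{H}$ is a group, so also the inverses $\gamma_i^{-1}$ belong to $\fl{H}$. Since $U$ is tidy for $\fl{H}$, by Lemma~\ref{Lemma: U-eigenfactors} it suffices to check that for every $\alpha \in \fl{H}$ one has $\alpha(U_{\mathbf{a}}) \leq U_{\mathbf{a}}$ or $\alpha(U_{\mathbf{a}}) \geq U_{\mathbf{a}}$. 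First I would record the elementary identity $\gamma_g(\Fix_F(X)) = g\,\Fix_F(X)\,g^{-1} = \Fix_F(g(X))$, valid for any $g \in F$ and any $X \subseteq \mc{C}$, together with the monotonicity $X \subseteq Y \Rightarrow \Fix_F(Y) \subseteq \Fix_F(X)$.

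Next I would split according to Lemma~\ref{Lemma: U gamma is fixator of apartment or root}(a). If $U_{\mathbf{a}} = \Fix_F(\Sigma)$, then every $g \in H$ stabilizes $\Sigma$, so $\gamma_g(U_{\mathbf{a}}) = \Fix_F(g(\Sigma)) = \Fix_F(\Sigma) = U_{\mathbf{a}}$ and the nesting condition holds trivially. If instead $U_{\mathbf{a}} = \Fix_F(\gamma')$ with $\gamma' = \conv(\gamma \cup \{c\}) \subseteq \Sigma$ a root, for some $\gamma \in \Phi_R$, then for $g \in H$ the restriction $g\vert_{\Sigma}$ is a translation of $\Sigma$, hence maps the root $\gamma'$ to a parallel root $g(\gamma')$: either $g(\gamma') \subseteq \gamma'$ or $\gamma' \subseteq g(\gamma')$. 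Combining the identity and the monotonicity above, $\gamma_g(U_{\mathbf{a}}) = \Fix_F(g(\gamma'))$ contains $U_{\mathbf{a}}$ in the first case and is contained in $U_{\mathbf{a}}$ in the second. Since every element of $\fl{H}$ is of the form $\gamma_g$ for some $g \in H$, this verifies the hypothesis of Lemma~\ref{Lemma: U-eigenfactors}, and we conclude that $U_{\{\gamma_1^{\epsilon_1},\dots,\gamma_k^{\epsilon_k}\}}$ is a $U$-eigenfactor of $\fl{H}$.

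I do not expect a genuine obstacle here: all the substantive work — identifying $U_{\mathbf{a}}$ with $\Fix_F(\Sigma)$ or with the fixator of a root of $\Sigma$ — has already been carried out in Lemma~\ref{Lemma: U gamma is fixator of apartment or root}, and Lemma~\ref{Lemma: U-eigenfactors} packages the passage from the nesting condition to the eigenfactor property. The only point requiring care is that conjugation by $g \in H$ moves $\Fix_F(\gamma')$ to $\Fix_F(g(\gamma'))$ with $g(\gamma')$ \emph{parallel} to $\gamma'$, so that the two fixators are comparable; this is precisely where the translation hypothesis defining $H$ is used.
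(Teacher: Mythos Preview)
Your proof is correct and follows essentially the same route as the paper: reduce to the nesting criterion of Lemma~\ref{Lemma: U-eigenfactors}, invoke Lemma~\ref{Lemma: U gamma is fixator of apartment or root}(a) to identify $U_{\mathbf{a}}$ as $\Fix_F(\Sigma)$ or $\Fix_F(\gamma')$, and then use that elements of $H$ stabilize $\Sigma$ and act as translations (hence send $\gamma'$ to a parallel root) to verify comparability. The only cosmetic difference is that you spell out the identities $\gamma_g(\Fix_F(X)) = \Fix_F(g(X))$ and the monotonicity of fixators explicitly, which the paper leaves implicit.
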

\begin{proof}
	We abbreviate $E := U_{\left\{ \gamma_1^{\epsilon_1}, \ldots, \gamma_k^{\epsilon_k} \right\}}$. By Lemma~\ref{Lemma: U gamma is fixator of apartment or root} we have $E = \Fix_F(\Sigma)$ or $E = \Fix_F(\gamma')$, where $\gamma' = \conv(\gamma \cup \{c\})$ for some root $\gamma \in \Phi_R$. Note that $\gamma'$ is a root as well. To show that $E$ is a $U$-eigenfactor, it suffices by Lemma~\ref{Lemma: U-eigenfactors} to show that for each $\alpha \in \fl{H}$ we have $\alpha(E) \leq E$ or $\alpha(E) \geq E$. Recall that $\fl{H} = \{ \gamma_g \mid g\in H \}$, and let $g\in H$. We consider the following two cases:
	\begin{enumerate}[label=(\alph*)]
		\item $E = \Fix_F(\Sigma)$: Then $\gamma_g(\Fix_F(\Sigma)) = \Fix_F(g(\Sigma)) = \Fix_F(\Sigma)$ and we are done.
		
		\item $E = \Fix_F(\gamma')$: Then $\gamma_g(\Fix_F(\gamma')) = \Fix_F(g(\gamma'))$ holds. As $g \vert_{\Sigma}$ is a translation, $g(\gamma')$ is a root parallel to $\gamma'$. This finishes the claim. \qedhere
	\end{enumerate}
\end{proof}

\begin{corollary}
	Suppose $\epsilon_1, \ldots, \epsilon_k \in \{\pm 1\}$ with $U_{\left\{ \gamma_1^{\epsilon_1}, \ldots, \gamma_k^{\epsilon_k} \right\}} = \Fix_F(\gamma')$ for some root $\gamma' \in \Phi$. Then
	\[ \bigcup_{\alpha \in \fl{H}} \alpha \left(U_{\left\{ \gamma_1^{\epsilon_1}, \ldots, \gamma_k^{\epsilon_k} \right\}} \right) = U_{(\gamma')_{\infty}} \Fix_F(\Sigma). \]
\end{corollary}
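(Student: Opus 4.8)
Write $E := U_{\left\{ \gamma_1^{\epsilon_1}, \ldots, \gamma_k^{\epsilon_k} \right\}}$, so that $E = \Fix_F(\gamma')$ by hypothesis, and set $\widehat{E} := \bigcup_{\alpha \in \fl{H}} \alpha(E)$. The plan is to prove the two inclusions of $\widehat{E} = U_{(\gamma')_{\infty}} \Fix_F(\Sigma)$ after translating $\widehat{E}$ into building language. First I would note that every $\alpha \in \fl{H}$ has the form $\gamma_g$ for some $g \in H$ and that $\gamma_g(\Fix_F(X)) = g\Fix_F(X)g\inv = \Fix_F(g(X))$ for all $X \subseteq \mc{C}$; hence $\widehat{E} = \bigcup_{g \in H} \Fix_F(g(\gamma'))$. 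Since each $g \in H$ restricts to a translation of $\Sigma$, each $g(\gamma')$ is again a root of $\Sigma$ parallel to $\gamma'$, so in particular $(g(\gamma'))_{\infty} = (\gamma')_{\infty}$.

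For the inclusion ``$\subseteq$'' I would fix a residue $R$ of type $X_n$ cutting $\Sigma$ and apply Proposition~\ref{Prop:stabilizerofaroot} to the root $g(\gamma')$, which gives $\Fix_F(g(\gamma')) = U_{(\gamma')_{\infty}, k_g} \Fix_F(\Sigma) \subseteq U_{(\gamma')_{\infty}} \Fix_F(\Sigma)$ for the integer $k_g := \dist\big(((\gamma')_{\infty})_R,\, g(\gamma')\big)$. Taking the union over $g \in H$ yields $\widehat{E} \subseteq U_{(\gamma')_{\infty}} \Fix_F(\Sigma)$.

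For ``$\supseteq$'' I would proceed in two steps. First, applying Proposition~\ref{Prop:stabilizerofaroot} with the same $R$ to every root $\gamma''$ in the parallel class of $\gamma'$ and using the bijection $i \mapsto \gamma'_i$ of $\Zb$ onto that class together with $\dist(\gamma'_i, \gamma'_j) = j-i$, I would obtain $\Fix_F(\gamma'') = U_{(\gamma')_{\infty}, k_{\gamma''}} \Fix_F(\Sigma)$ with $k_{\gamma''}$ running over all of $\Zb$ as $\gamma''$ runs over the class; since $\phi_{(\gamma')_{\infty}}(1) = \infty$, the subgroups $U_{(\gamma')_{\infty}, k}$ exhaust $U_{(\gamma')_{\infty}}$, whence $\bigcup_{\gamma'' \parallel \gamma'} \Fix_F(\gamma'') = U_{(\gamma')_{\infty}} \Fix_F(\Sigma)$. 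Second, I would show $\Fix_F(\gamma'') \subseteq \widehat{E}$ for every such $\gamma'' = \gamma'_i$: if $i \geq 0$ then $\gamma' \subseteq \gamma''$, so $\Fix_F(\gamma'') \subseteq \Fix_F(\gamma') = E \subseteq \widehat{E}$; if $i < 0$ I would produce a translation $t \in H$ with $t(\gamma') \subsetneq \gamma'$ by lifting the reflections $r_{\gamma'}, r_{\gamma'_{-1}} \in W$ to elements of $N = \Stab_F(\Sigma) \cap \Aut_0(\Delta)$ — possible since $W \leq N\vert_{\Sigma}$ — and multiplying them, so that $t\vert_{\Sigma} = r_{\gamma'_{-1}} r_{\gamma'}$ is the translation carrying $\gamma' = \gamma'_0$ to $\gamma'_{-2}$; then $t^n(\gamma') = \gamma'_{-2n} \subseteq \gamma'_i$ once $2n \geq -i$, and therefore $\Fix_F(\gamma'') \subseteq \Fix_F(t^n(\gamma')) = \gamma_t^n(E) \subseteq \widehat{E}$ because $\gamma_t^n \in \fl{H}$. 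Combining the two steps gives $U_{(\gamma')_{\infty}} \Fix_F(\Sigma) \subseteq \widehat{E}$, which together with ``$\subseteq$'' finishes the proof.

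The step I expect to be the main obstacle is the second half of ``$\supseteq$'', namely upgrading the $H$-orbit of $\gamma'$ to the whole parallel class: one has to exhibit, inside $H$, translations moving $\gamma'$ arbitrarily far into itself, and this is precisely where the identification $W \leq N\vert_{\Sigma}$ and the elementary geometry of two parallel walls in the affine apartment $\Sigma$ enter. The bookkeeping with the index $k$ in Proposition~\ref{Prop:stabilizerofaroot} — that, for a fixed residue $R$, it depends on $\gamma''$ only through an additive shift, by the $\dist$ bijection — is routine. (Alternatively, once one checks $E \neq U_{\fl{H}0}$, Lemma~\ref{Lemma: eigenfactor yields unreduced subgroup} already shows that $\widehat{E}$ is a scaling subgroup, hence closed; but the explicit identification still requires the building-theoretic argument above.)
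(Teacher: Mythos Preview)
Your proof is correct and follows the same overall architecture as the paper: both inclusions are shown, the ``$\subseteq$'' direction is identical (Proposition~\ref{Prop:stabilizerofaroot} applied to each $g(\gamma')$ together with $(g(\gamma'))_{\infty}=(\gamma')_{\infty}$), and for ``$\supseteq$'' both arguments reduce to producing, inside $H$, a translation pushing $\gamma'$ into any smaller parallel root.

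The organization of ``$\supseteq$'' differs slightly. The paper starts from a fixed $u\in U_{(\gamma')_{\infty}}\setminus\{1\}$, uses \cite[Proposition~13.2]{We09} to read off the root $\beta=\Sigma\cap u(\Sigma)$ on which $u$ acts trivially, and then invokes \cite[Corollary~1.26, Proposition~1.28]{We09} to obtain a special translation $t\in W$ with $t(\gamma')\subseteq\beta$, lifted to $g\in H$. You instead first rewrite $U_{(\gamma')_{\infty}}\Fix_F(\Sigma)$ as $\bigcup_{\gamma''\parallel\gamma'}\Fix_F(\gamma'')$ via the filtration $U_{(\gamma')_{\infty},k}$, and then manufacture the needed translation by hand as a product of two parallel reflections lifted through $W\leq N\vert_{\Sigma}$. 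Both routes are equally short; yours avoids the citations to \cite{We09} for the translation at the cost of the extra bookkeeping with the filtration, while the paper's is a touch more direct in that it never passes through the entire parallel class.
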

\begin{proof}
	Let $t\in H$. As $t(\gamma')$ is a root parallel to $\gamma'$, we obtain $(\gamma')_{\infty} = t(\gamma')_{\infty}$ by \cite[Proposition~4.22]{We09}. Now Proposition~\ref{Prop:stabilizerofaroot} implies \[ \gamma_t(\Fix_F(\gamma')) = \Fix_F(t(\gamma')) \subseteq U_{t(\gamma')_{\infty}} \Fix_F(\Sigma) = U_{(\gamma')_{\infty}} \Fix_F(\Sigma). \]
	This shows one inclusion. For the other we note that $\Fix_F(\Sigma) \leq \Fix_F(\gamma')$. Hence, it suffices to consider $U_{(\gamma')_{\infty}}$. Thus let $u \in U_{(\gamma')_{\infty}} \setminus\{1\}$. We obtain by \cite[Proposition~13.2]{We09} that $\beta := \Sigma \cap u(\Sigma)$ is a root with $\beta_{\infty} = (\gamma')_{\infty}$ on which $u$ acts trivially. In particular, $\beta$ and $\gamma'$ are parallel. If $\gamma' \subseteq \beta$, we infer $u \in \Fix_F(\gamma')$ and we are done. Thus we can assume $\beta \subsetneq \gamma'$. By \cite[Corollary~1.26 and Proposition~1.28]{We09} there exists a special translation $t \in \Aut_0(\Sigma) = W$ with $t(\gamma') \subseteq \beta$. Then there exists $g \in H$ with $g \vert_{\Sigma} = t$. This implies $u \in \Fix_F(\beta) \leq \Fix_F\left(g(\gamma')\right) = \gamma_g(\Fix_F(\gamma'))$. This finishes the claim.
\end{proof}

\subsubsection*{Computation of $\Phi(\fl{H})$}

We apply \cite[Proposition~$2.4.2$]{willis2025flatgroups} to $U$ and the automorphisms $\gamma_1, \ldots, \gamma_k \in \fl{H}$ from Convention~\ref{Convention: gamma_i}. By Theorem~\ref{Theorem:Eigenfactorfixesatleastaroot} all the subgroups $U_{\epsilon}$ are $U$-eigenfactors of $\fl{H}$. Lemma~\ref{Lemma: flat U decomposition into eigenfactors} implies that $\{ U_{\fl{H}0}, U_{\epsilon} \mid \epsilon \in \{+, - \}^n \}$ is the set of all $U$-eigenfactors of $\fl{H}$. For a root $\gamma \in \Phi_R$ we define $\gamma' := \conv(\gamma \cup \{c\})$ and recall that $\gamma'$ is a root as well.

\begin{lemma}\label{Lemma: PhiR bijection to U-eigenfactors}
	The map from $\Phi_R$ to the set of all $U$-eigenfactors of $\fl{H}$ different from $U_{\fl{H}0}$ which maps $\gamma$ to $\Fix_F(\gamma')$ is a bijection.
\end{lemma}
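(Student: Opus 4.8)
The statement to prove is that the map $\Phi_R \to \{U\text{-eigenfactors of } \fl{H}\} \setminus \{U_{\fl{H}0}\}$, $\gamma \mapsto \Fix_F(\gamma')$, is a bijection. I would prove surjectivity and injectivity separately, both essentially assembled from results already in hand. For \emph{surjectivity}: by the discussion immediately preceding the lemma, the full set of $U$-eigenfactors is $\{U_{\fl{H}0}, U_\epsilon \mid \epsilon \in \{+,-\}^n\}$ (here $n = k$ is the length of the chosen minimal gallery $(c_0,\dots,c_k=d)$, and the $U_\epsilon$ come from \cite[Proposition~2.4.2]{willis2025flatgroups} applied to $\gamma_1,\dots,\gamma_k$). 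Each $U_\epsilon$ equals some $U_{\{\gamma_1^{\epsilon_1},\dots,\gamma_k^{\epsilon_k}\}}$, so by Lemma~\ref{Lemma: U gamma is fixator of apartment or root}(a) each $U_\epsilon$ is either $\Fix_F(\Sigma)$ or $\Fix_F(\gamma')$ for some $\gamma \in \Phi_R$. The one left out is $U_{\fl{H}0}$: I should argue that $U_{\fl{H}0} = \Fix_F(\Sigma)$ — intuitively $U_{\fl{H}0} = \bigcap_{g\in H}\gamma_g(U) = \bigcap_{g\in H} F_{g(c)}\cap\Aut_0(\Delta)$ fixes every $H$-translate of $c$, and since the translations act cocompactly these translates have convex hull all of $\Sigma$, so $U_{\fl{H}0} = \Fix_F(\Sigma)\cap\Aut_0(\Delta) = \Fix_F(\Sigma)$. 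Hence every eigenfactor $\neq U_{\fl{H}0}$ is $\Fix_F(\gamma')$ for some $\gamma\in\Phi_R$ (it cannot be $\Fix_F(\Sigma)=U_{\fl{H}0}$), and conversely Lemma~\ref{Lemma: U gamma is fixator of apartment or root}(b) guarantees every $\Fix_F(\gamma')$ with $\gamma\in\Phi_R$ actually arises as some $U_{\{\gamma_1^{\epsilon_1},\dots,\gamma_k^{\epsilon_k}\}} = U_\epsilon$, so it is in the target. This shows the map is well-defined with image exactly the eigenfactors $\neq U_{\fl{H}0}$.

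\textbf{Injectivity.} Suppose $\gamma, \delta \in \Phi_R$ with $\Fix_F(\gamma') = \Fix_F(\delta')$. Here $\gamma' = \conv(\gamma\cup\{c\})$ is a root parallel to $\gamma$ with $\gamma'_\infty = \gamma_\infty$ — and since $\gamma \in \Phi_R$, in fact $(\gamma')_\infty = (\gamma_R)_\infty$ determines $\gamma$ uniquely among roots in $\Phi_R$. So it suffices to recover $(\gamma')_\infty$ from $\Fix_F(\gamma')$. The cleanest route is Proposition~\ref{Prop:stabilizerofaroot}: $\Fix_F(\gamma') = U_{(\gamma')_\infty, k}\,\Fix_F(\Sigma)$ for the appropriate $k$, and passing to the "unreduced" closure $\bigcup_{\alpha\in\fl{H}}\alpha(\Fix_F(\gamma')) = U_{(\gamma')_\infty}\Fix_F(\Sigma)$ (this is exactly the preceding Corollary). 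From this group one reads off $U_{(\gamma')_\infty}$ — e.g. as the subgroup generated by those elements acting trivially on $\Sigma_\infty$ outside the half-determined by $(\gamma')_\infty$, or more simply: $U_{(\gamma')_\infty}$ and $U_{(\delta')_\infty}$ generate the same over $\Fix_F(\Sigma)$, and since distinct roots $a \neq b$ of the (thick) spherical Moufang building $\Delta_\infty$ give $U_a \neq U_b$ and $U_a \Fix(\Sigma_\infty) \neq U_b\Fix(\Sigma_\infty)$ unless $a = b$, we get $(\gamma')_\infty = (\delta')_\infty$, whence $\gamma_\infty = \delta_\infty$. Since $\gamma, \delta$ are both the unique representative in $\Phi_R$ of their parallel class (Definition before Lemma~\ref{Lemma: alpha_R subseteq alpha}, with $\gamma = \gamma_R$, $\delta = \delta_R$), and parallel roots have equal $(\cdot)_\infty$ by \cite[Proposition~4.22]{We09} with the converse on $\Phi_R$-representatives by \cite[Proposition~8.30(ii)]{We09}, we conclude $\gamma = \delta$.

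\textbf{Main obstacle.} The crux is the identification $U_{\fl{H}0} = \Fix_F(\Sigma)$ and, relatedly, making precise that $\Fix_F(\Sigma)$ really is \emph{not} of the form $\Fix_F(\gamma')$ for any $\gamma \in \Phi_R$ — i.e. that the "apartment case" and the "root case" of Lemma~\ref{Lemma: U gamma is fixator of apartment or root} genuinely produce different subgroups, so the map lands exactly in the complement of $U_{\fl{H}0}$ and no eigenfactor is double-counted. The first point I would handle via the convex-hull argument sketched above (using Lemma~\ref{Lemma:gfixesXimpliesgfixesconv(X)} and that the $t_{R,d}^n(c)$-orbits fill out $\Sigma$ by Lemma~\ref{Lemma:convexhullofsectors} and Lemma~\ref{Lemma:convexhullisroot}(b)); the second follows because $\Fix_F(\gamma') \supsetneq \Fix_F(\Sigma)$ strictly — the root group $U_{(\gamma')_\infty}$ is non-trivial by thickness and Moufang, and an element of it moving $\Sigma$ lies in $\Fix_F(\gamma')$ but not in $\Fix_F(\Sigma)$. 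Injectivity itself is then comparatively routine given Proposition~\ref{Prop:stabilizerofaroot} and the preceding Corollary; the only care needed is the standard fact that distinct roots of a thick spherical Moufang building have distinct root groups, which I would cite from the Moufang setup in \cite{We09} (or deduce from the simple transitivity in the definition of Moufang buildings in the excerpt).
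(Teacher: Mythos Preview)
Your surjectivity and well-definedness arguments follow the paper's essentially verbatim: both use Lemma~\ref{Lemma: U gamma is fixator of apartment or root}(a) to see that every $U_\epsilon$ is either $\Fix_F(\Sigma)$ or some $\Fix_F(\gamma')$, use part~(b) to see that every $\Fix_F(\gamma')$ arises, and identify $U_{\fl{H}0}=\Fix_F(\Sigma)$. (Your justification of the last identity via ``cocompactness'' is imprecise; the clean argument is that $H$ contains all $f_{R,e}$ with $e\in R$, so by Lemma~\ref{Lemma:convexhullisroot}(b) applied with $C=R$ the convex hull of the $H$-orbit of $c$ is all of $\Sigma$, whence $U_{\fl{H}0}=\Fix_F(\Sigma)$ via Lemma~\ref{Lemma:gfixesXimpliesgfixesconv(X)}. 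The strict inequality $\Fix_F(\gamma')\supsetneq\Fix_F(\Sigma)$ that you need for well-definedness is immediate from Proposition~\ref{Prop:stabilizerofaroot} together with the surjectivity of $\phi_a$.)

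For injectivity you take a genuinely different route. The paper argues purely inside the affine apartment: given $\gamma_1\neq\gamma_2\in\Phi_R$, it invokes \cite[Proposition~1.15]{We09} to split into three cases for the pair $(\gamma_1',\gamma_2')$ --- parallel, anti-parallel, or both cutting a common gem $R'$ --- and in each case derives a contradiction or directly sees $\Fix_F(\gamma_1')\neq\Fix_F(\gamma_2')$ using \cite[Proposition~1.13]{We09} and the inclusion $\gamma_i\subseteq\gamma_i'$. Your approach instead passes to the building at infinity: from $\Fix_F(\gamma')=\Fix_F(\delta')$ you take the $\fl{H}$-saturation and apply the Corollary after Theorem~\ref{Theorem:Eigenfactorfixesatleastaroot} to get $U_{(\gamma')_\infty}\Fix_F(\Sigma)=U_{(\delta')_\infty}\Fix_F(\Sigma)$, then argue that distinct roots of the spherical Moufang building $\Delta_\infty$ cannot give the same such product, forcing $(\gamma')_\infty=(\delta')_\infty$ and hence $\gamma=\delta$ via \cite[Propositions~1.13 and~8.30(ii)]{We09}. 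This last step is correct but is not a one-line citation: you need that a nontrivial $u\in U_a$ cannot fix a second root $b\neq a$ of $\Sigma_\infty$ pointwise, which follows from the fact that in a spherical apartment distinct roots have convex hull the whole apartment (so $u$ would fix $\Sigma_\infty$, contradicting simple transitivity of $U_a$). The paper's combinatorial case split avoids invoking the Moufang structure at infinity for injectivity; your route is more conceptual and explains \emph{why} the eigenfactors are parametrised by $\Phi_\infty\cong\Phi_R$, at the cost of leaning on Proposition~\ref{Prop:stabilizerofaroot} and the subsequent Corollary.
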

\begin{proof}
	By Lemma~\ref{Lemma: U gamma is fixator of apartment or root}(a) we know that each $U$-eigenfactor different from $U_{\fl{H}0} = \Fix_F(\Sigma)$ is equal to $\Fix_F(\gamma')$ for some root $\gamma \in \Phi_R$. Moreover, by Lemma~\ref{Lemma: U gamma is fixator of apartment or root}(b), for each root $\gamma \in \Phi_R$, $\Fix_F(\gamma')$ is indeed a $U$-eigenfactor of $\fl{H}$ (different from $U_{\fl{H}0})$. Thus the map is well-defined and surjective. Now let $\gamma_1 \neq \gamma_2 \in \Phi_R$. It is left to show that $\Fix_F(\gamma_1') \neq \Fix_F(\gamma_2')$. Note that by \cite[Proposition~$1.15$]{We09} one of the following hold:
	\begin{itemize}
		\item $\gamma_1'$ and $\gamma_2'$ are parallel: As parallelism of roots of $\Sigma$ is an equivalence relation by \cite[Corollary~$1.4$]{We09}, the roots $\gamma_1$ and $\gamma_2$ are parallel as well. Then \cite[Proposition~$1.13$]{We09} implies $\gamma_1 = \gamma_2$, which is a contradiction.
		
		\item $\gamma_1'$ and $(-\gamma_2')$ are parallel: Then again $\gamma_1$ and $(-\gamma_2)$ are parallel and we have $\gamma_1 = -\gamma_2$. Note that $\gamma_i \subseteq \gamma_i'$. Thus $\Fix_F(\gamma_1') = \Fix_F(\gamma_2')$ would imply 
		\[ \Fix_F(\gamma_1') = \Fix_F(\gamma_2') \leq \Fix_F(\gamma_1) \cap \Fix_F(\gamma_2) = \Fix_F(\Sigma), \]
		which is a contradiction.
		
		\item There exists a gem $R'$ with $\gamma_1', \gamma_2' \in \Phi_{R'}$. If $\gamma_1' \neq \gamma_2'$, then $\Fix_F(\gamma_1') \neq \Fix_F(\gamma_2')$. Thus we can assume $\gamma_1' = \gamma_2'$. As parallelism of roots of $\Sigma$ is an equivalence relation, the roots $\gamma_1$ and $\gamma_2$ are parallel. But then \cite[Proposition~$1.13$]{We09} implies $\gamma_1 = \gamma_2$, which is a contradiction. \qedhere
	\end{itemize}
\end{proof}

\begin{definition}
	Let $\gamma \in \Phi_R$ be a root. By Theorem~\ref{Theorem:Eigenfactorfixesatleastaroot}, $E := \Fix_F(\gamma')$ is a $U$-eigenfactor of $\fl{H}$. By Lemma~\ref{Lemma: eigenfactor yields unreduced subgroup},
	\[ \widehat{E} = \bigcup\nolimits_{\alpha \in \fl{H}} \alpha(\Fix_F(\gamma')) \]
	is a scaling subgroup for $\fl{H}$. By \cite[Definition~$2.2.3$]{willis2025flatgroups} there exists a root $\rho_{\gamma} \in \Phi(\fl{H})$ which is associated to $\widehat{E}$. Moreover, for $\alpha \in \fl{H}$ we have
	\begin{align*}
		\rho_{\gamma}(\alpha) & \geq 0 \Longleftrightarrow \alpha(\Fix_F(\gamma')) \geq \Fix_F(\gamma') \qquad \text{and} \\
		\rho_{\gamma}(\alpha) &\leq 0 \Longleftrightarrow \alpha(\Fix_F(\gamma')) \leq \Fix_F(\gamma').
	\end{align*}
\end{definition}

\begin{proposition}
	The mapping $\phi: \Phi_R \to \Phi(\fl{H}), \gamma \mapsto \rho_{\gamma}$ is a well-defined bijection.
\end{proposition}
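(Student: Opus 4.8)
The plan is to establish, in order, that $\phi$ is a well-defined map, that it is surjective, and that it is injective; the first two are short deductions from what is already in place, and the injectivity carries the weight.

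\emph{Well-definedness and surjectivity.} That $\rho_\gamma$ is a genuine, uniquely determined element of $\Phi(\fl{H})$ for each $\gamma\in\Phi_R$ is exactly what the preceding Definition records: $\Fix_F(\gamma')$ is a $U$-eigenfactor of $\fl{H}$ different from $U_{\fl{H}0}$ by Lemma~\ref{Lemma: PhiR bijection to U-eigenfactors}, so $\widehat{\Fix_F(\gamma')}$ is a scaling subgroup by Lemma~\ref{Lemma: eigenfactor yields unreduced subgroup}, and the root associated with a scaling subgroup is unique by \cite[Definition~$2.2.3$]{willis2025flatgroups}. For surjectivity, fix $\rho\in\Phi(\fl{H})$ and apply Theorem~\ref{Theorem: roots are associated with eigenfactors} to the automorphisms $\gamma_1,\dots,\gamma_k$ of Convention~\ref{Convention: gamma_i} --- whose decomposition satisfies the hypothesis of Lemma~\ref{Lemma: flat U decomposition into eigenfactors} by Theorem~\ref{Theorem:Eigenfactorfixesatleastaroot}; this produces a $U$-eigenfactor $E$ with $\rho$ associated with $\widehat{E}$. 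Since $\widehat{E}$ is a scaling subgroup it is non-compact, hence $E\neq U_{\fl{H}0}$, and Lemma~\ref{Lemma: PhiR bijection to U-eigenfactors} gives $E=\Fix_F(\gamma')$ for some $\gamma\in\Phi_R$. Then $\widehat{E}$ is the very scaling subgroup defining $\rho_\gamma$, so $\rho=\rho_\gamma=\phi(\gamma)$ by uniqueness of the associated root.

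\emph{Injectivity.} Suppose $\rho_{\gamma_1}=\rho_{\gamma_2}$ for $\gamma_1,\gamma_2\in\Phi_R$; I must deduce $\gamma_1=\gamma_2$. The key is a dictionary between the $\fl{H}$-dynamics of $E_i:=\Fix_F(\gamma_i')$ and the geometry of $\gamma_i'$ in $\Sigma$. For $g\in H$ one has $\gamma_g(E_i)=\Fix_F(g(\gamma_i'))$, and $g(\gamma_i')$ is a root of $\Sigma$ in the parallel class of $\gamma_i'$ since $g|_{\Sigma}$ is a translation. Writing $\Fix_F(\beta)=U_{\beta_{\infty},\,\dist((\beta_{\infty})_R,\beta)}\Fix_F(\Sigma)$ via Proposition~\ref{Prop:stabilizerofaroot}, and using that $U_{\beta_{\infty}}\cap\Fix_F(\Sigma)=\{1\}$ (a non-trivial root group element moves a chamber of $\Sigma$) together with the strict inclusions $U_{a,j}\supsetneq U_{a,j+1}$ coming from surjectivity of $\phi_a$ (\cite[Proposition~$13.5$]{We09}), one sees that \emph{distinct roots of a common parallel class have distinct fixators in $F$}. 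Combined with Remark~\ref{Remark: tidy subgroup of unreduced group} (applied to the scaling subgroup $\widehat{E_i}$ and its compact relatively open subgroup $E_i$) this gives, for every $g\in H$,
\[ \rho_{\gamma_i}(\gamma_g)=0 \iff g(\gamma_i')=\gamma_i', \qquad \rho_{\gamma_i}(\gamma_g)>0 \iff g(\gamma_i')\subsetneq\gamma_i', \qquad \rho_{\gamma_i}(\gamma_g)<0 \iff g(\gamma_i')\supsetneq\gamma_i'. \]
Now I apply the trichotomy \cite[Proposition~$1.15$]{We09} to $\gamma_1',\gamma_2'$, exactly as in the proof of Lemma~\ref{Lemma: PhiR bijection to U-eigenfactors}, recalling that every special translation of $\Sigma$ is realized in $H$ because $\Aut_0(\Sigma)=W\le N|_{\Sigma}$. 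If $\gamma_1'$ and $\gamma_2'$ cut a common gem and are distinct without being opposite, then $\partial\gamma_1'\neq\partial\gamma_2'$, so there is a special translation fixing $\gamma_1'$ but not $\gamma_2'$; realizing it by $g\in H$ gives $\rho_{\gamma_1}(\gamma_g)=0\neq\rho_{\gamma_2}(\gamma_g)$, a contradiction. If $\gamma_1'$ is parallel to $-\gamma_2'$ (covering the opposite subcase as well), choose a special translation with $g(\gamma_1')\subsetneq\gamma_1'$ (such exist by \cite[Proposition~$1.24$]{We09} and \cite[Proposition~$1.28$]{We09}), realized by $g\in H$; then $g$ properly enlarges $\gamma_2'$, so $\rho_{\gamma_1}(\gamma_g)>0$ while $\rho_{\gamma_2}(\gamma_g)<0$, again a contradiction. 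The only surviving possibility is $\gamma_1'\parallel\gamma_2'$; since $\gamma_i\parallel\gamma_i'$ and parallelism is transitive (\cite[Corollary~$1.4$]{We09}), $\gamma_1\parallel\gamma_2$, and as both cut $R$, \cite[Proposition~$1.13$]{We09} forces $\gamma_1=\gamma_2$.

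The main obstacle is the injectivity argument, and within it the one genuinely non-formal ingredient is the statement that distinct roots in a common parallel class have distinct $F$-fixators --- equivalently, that $\Fix_F$ is \emph{strictly} inclusion-reversing along each parallel class. This is what makes the kernel and the two half-spaces of $\rho_{\gamma_i}$ detect the parallel class of $\gamma_i'$ rather than a coarser invariant, and it rests precisely on Proposition~\ref{Prop:stabilizerofaroot} and the surjectivity of the affine valuations $\phi_a$; once it is in place, the remainder is a reprise of the parallelism trichotomy already used for Lemma~\ref{Lemma: PhiR bijection to U-eigenfactors}.
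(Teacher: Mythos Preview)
Your proof is correct and follows essentially the same overall strategy as the paper's: well-definedness and surjectivity via the eigenfactor machinery (Lemma~\ref{Lemma: PhiR bijection to U-eigenfactors}, Lemma~\ref{Lemma: flat U decomposition into eigenfactors}, Theorem~\ref{Theorem: roots are associated with eigenfactors}), and injectivity by exhibiting an element of $\fl{H}$ on which $\rho_{\gamma_1}$ and $\rho_{\gamma_2}$ disagree.

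The difference is in how the injectivity witness is constructed. The paper handles it in one stroke: for $\gamma\neq\delta$ in $\Phi_R$ it asserts directly (via \cite[Proposition~1.24]{We09} when $\delta\neq-\gamma$, trivially otherwise) that there is a translation $t$ with $t(\gamma')\supsetneq\gamma'$ and $t(\delta')\subsetneq\delta'$, giving $\rho_\gamma(\gamma_t)<0<\rho_\delta(\gamma_t)$. You instead run the trichotomy of \cite[Proposition~1.15]{We09} on $\gamma_1',\gamma_2'$, and in the ``common gem, non-opposite'' case use a translation \emph{fixing} $\gamma_1'$ but \emph{moving} $\gamma_2'$, so that $\rho_{\gamma_1}(\gamma_g)=0\neq\rho_{\gamma_2}(\gamma_g)$. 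Both are legitimate; your route makes the case structure visible and spells out why special translations suffice and are realized in $H$. You also make explicit the step the paper leaves tacit, namely that $\Fix_F$ is strictly inclusion-reversing along a parallel class (your appeal to Proposition~\ref{Prop:stabilizerofaroot} and surjectivity of $\phi_a$); the paper simply writes the strict inequalities $\Fix_F(t(\gamma'))<\Fix_F(\gamma')$ without comment. So your argument is a more detailed variant of the paper's, with a slightly different separating translation in one subcase.
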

\begin{proof}
	By Lemma~\ref{Lemma: flat U decomposition into eigenfactors} and Theorem~\ref{Theorem: roots are associated with eigenfactors} all roots $\rho \in \Phi(\fl{H})$ are of the form $\rho_{\gamma}$ for some $\gamma \in \Phi_R$. Thus it suffices to show that $\phi$ is injective. Let $\gamma, \delta \in \Phi_R$ with $\gamma \neq \delta$. Then there exists a translation $t$ such that $t(\gamma') \supsetneq \gamma'$ and $t(\delta') \subsetneq \delta'$: The case $\gamma = -\delta$ is obvious and the case $\delta \neq -\gamma$ follows essentially from \cite[Proposition~1.24]{We09}. This implies
	\allowdisplaybreaks
	\begin{align*}
		\gamma_t(\Fix_F(\gamma')) = \Fix_F(t(\gamma')) < \Fix_F(\gamma') &\Longrightarrow \rho_{\gamma}(\gamma_t) < 0, \\
		\gamma_t(\Fix_F(\delta')) = \Fix_F(t(\delta')) > \Fix_F(\delta') &\Longrightarrow \rho_{\delta}(\gamma_t) >0.
	\end{align*}
	We conclude $\rho_{\gamma} \neq \rho_{\delta}$ and $\phi$ is injective.
\end{proof}

\begin{remark}
	Recall that $R$ is a gem. Hence $R$ is a residue of type $X_n$ by definition. Note that $(W_{\infty}, S_{\infty})$ is also of type $X_n$. Thus there is a bijection from $\Phi(W_{\infty}, S_{\infty})$ to $\Phi_R$. The number of roots of such a Coxeter system of type $X_n$ is explicitly stated in \cite[(I) in Plate I-IX]{Bo02}
	\[ \begin{tabular}{|c|c|c|c|c|c|c|c|c|}
		\hline type & $A_n$ & $B_n/C_n$ & $D_n$ & $E_6$ & $E_7$ & $E_8$ & $F_4$ & $G_2$ \\
		\hline number of roots & $n(n+1)$ & $2n^2$ & $2n(n-1)$ & $72$ & $126$ & $240$ & $48$ & $12$ \\
		\hline
	\end{tabular} \]
\end{remark}

\subsubsection*{Explicit values of roots in $\Phi(\fl{H})$}

Let $\gamma \in \Phi_R$ and let $\rho_{\gamma}$ be the corresponding root. Define 
\[ s_{\gamma} := \min\{ [\beta(\Fix_F(\gamma')): \Fix_F(\gamma')] \mid \beta \in \fl{H}, \beta(\Fix_F(\gamma')) > \Fix_F(\gamma') \}. \]
Recall from Remark~\ref{Remark: tidy subgroup of unreduced group} that the root $\rho_{\gamma}: \fl{H} \to \Zb$ satisfies
\[ s_{\gamma}^{\rho_{\gamma}(\alpha)} = \begin{cases}
	[\alpha(\Fix_F(\gamma')) : \Fix_F(\gamma')] & \text{, if } \alpha(\Fix_F(\gamma')) \geq \Fix_F(\gamma') \\
	[\Fix_F(\gamma'): \alpha(\Fix_F(\gamma'))]^{-1} & \text{, if } \alpha(\Fix_F(\gamma')) \leq \Fix_F(\gamma')
\end{cases} \]
Note that $\gamma_t(\Fix_F(\gamma')) = \Fix_F(t(\gamma'))$. Thus we obtain the following explicit values of the root $\rho_{\gamma}$, where $t\in H$:
\allowdisplaybreaks
\begin{align*}
	\rho_{\gamma}(\gamma_t) &= \begin{cases}
		\log_{s_{\gamma}}([ \Fix_F(t(\gamma')): \Fix_F(\gamma')]) & \text{, if } t(\gamma') \subseteq \gamma' \\
		-\log_{s_{\gamma}}([\Fix_F(\gamma'): \Fix_F(t(\gamma'))]) & \text{, if } t(\gamma') \supseteq \gamma'
	\end{cases}
\end{align*}

\section{Equivalent definition of roots}

\begin{convention}
	In this section we let $G$ be a \tdlc group and $\fl{H} \leq \Aut(G)$ be flat.
\end{convention}

\begin{definition}\label{Definition: roots Bischof}
	A surjective homomorphism $\rho: \fl{H} \to \Zb$ is called a \emph{root} if for each subgroup $U \in \COS(G)$ tidy for $\fl{H}$ we have $U_{\fl{H}0} < U_{\rho}$.
\end{definition}

\begin{remark}
	Definition~\ref{Definition: roots Bischof} is closer to the definition of roots of Cartan subgroups than \cite[Definition~$2.2.3$]{willis2025flatgroups}. If we call $U_{\rho}$ the \emph{root space} of $\rho$, then a surjective homomorphism is a root, if its root space is non-trivial, which in the present situation means that it strictly contains $U_{\fl{H}0}$. However, we will see in Theorem~\ref{Theorem: Equivalent definitions of roots} that the definition of roots in Definition~\ref{Definition: roots Bischof} is equivalent to the definition of roots in \cite[Definition~$2.2.3$]{willis2025flatgroups}.
\end{remark}

We first want to show that a surjective homomorhism $\rho$ is a root if and only if $U_{\fl{H}0} < U_{\rho}$ holds for some $U \in \COS(G)$ tidy for $\fl{H}$. Therefore, we need a few auxiliary results.

\begin{lemma}\label{Lemma: root implies non-compact and closed}
	Let $\rho: \fl{H} \to \Zb$ be a surjective homomorphism and let $U \in \COS(G)$ be tidy for $\fl{H}$. Then the following hold:
	\begin{enumerate}[label=(\alph*)]
		\item If $V\in \COS(G)$ is tidy for $\fl{H}$ and contained in $U$, then $\widehat{U}_{\rho} \cap V = V_{\rho}$;
		
		\item $\widehat{U}_{\rho}$ is closed.
		
		\item $U_{\fl{H}0} < U_{\rho}$ if and only if $\widehat{U}_{\rho}$ is non-compact.
	\end{enumerate}
\end{lemma}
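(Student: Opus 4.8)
The three parts are naturally proved in the order (a), (b), (c), since (b) follows almost immediately from (a) and (c) uses both. For part (a), the key is that $V$ being tidy for $\fl{H}$ and contained in $U$ forces $V_{\alpha\epsilon} = V \cap U_{\alpha\epsilon}$ for every $\alpha \in \fl{H}$ and $\epsilon \in \{+,-\}$; this is exactly the content of \cite[Proposition~$1.1.7$]{willis2025flatgroups} (the same fact already invoked in the proof of Lemma~\ref{Lemma: flat U decomposition into eigenfactors}). Taking intersections over $\{\alpha \in \fl{H} \mid \rho(\alpha)\geq 0\}$ gives $V_{\rho} = V \cap U_{\rho}$. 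One then needs to promote this to the ``hat'' versions: since $\rho$ is surjective, pick $\alpha_0 \in \fl{H}$ with $\rho(\alpha_0)=1$, so that $\widehat{U}_{\rho} = \bigcup_{n\geq 0}\alpha_0^{-n}(U_{\rho})$ and likewise for $V$; because $\alpha_0^{-n}$ is an automorphism of $G$ and $V \leq U$ need not be $\alpha_0$-invariant, one has to argue that $\widehat{U}_{\rho}\cap V$ is covered by the sets $\alpha_0^{-n}(V_{\rho})$, using that $V$ is open and $\widehat{U}_{\rho}$ carries the subgroup topology in which $V_{\rho}$ is open. I would phrase this via Remark~\ref{Remark: flat group generated by kernel}, which gives $\fl{H} = \langle \alpha_0\rangle \ker(\rho)$, together with the observation that $U_{\rho}$ is $\ker(\rho)$-invariant, so the whole computation reduces to a single generator $\alpha_0$.

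For part (b) I would apply part (a) with $V = U$ itself: there is a standard argument (the one referenced as ``As in \cite[Proposition~$2.2.5$]{willis2025flatgroups}'' in the proof of Lemma~\ref{Lemma: eigenfactor yields unreduced subgroup}) that a nested union of compact open subgroups of $G$ is closed provided its intersection with a fixed compact open subgroup is closed; here $\widehat{U}_{\rho} \cap U = U_{\rho}$ by (a), and $U_{\rho}$, being an intersection of the compact open groups $\alpha(U)$, is closed. The only point requiring care is that $\widehat{U}_{\rho}$ is a \emph{group}: this holds because $\{\alpha_0^{-n}(U_{\rho}) \mid n \geq 0\}$ is an increasing chain, which in turn follows from $\alpha_0(U_{\rho}) \leq U_{\rho}$, i.e. from the definition of $U_{\rho}$ as $\bigcap\{\alpha(U) \mid \rho(\alpha)\geq 0\}$ together with $\rho(\alpha_0)=1\geq 0$.

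For part (c), the implication ``$U_{\fl{H}0} < U_{\rho}$ $\Rightarrow$ $\widehat{U}_{\rho}$ non-compact'' is the crux. I would argue contrapositively: if $\widehat{U}_{\rho}$ is compact then, being an increasing union $\bigcup_n \alpha_0^{-n}(U_{\rho})$ of compact open subgroups, it must stabilise, so $\alpha_0^{-1}(U_{\rho}) = U_{\rho}$, hence $\alpha_0(U_{\rho}) = U_{\rho}$. Combined with $\ker(\rho)$-invariance and $\fl{H} = \langle\alpha_0\rangle\ker(\rho)$ this gives $\alpha(U_{\rho}) = U_{\rho}$ for all $\alpha \in \fl{H}$, whence $U_{\rho} \leq \bigcap\{\alpha(U)\mid \alpha\in\fl{H}\} = U_{\fl{H}0}$, forcing $U_{\fl{H}0} = U_{\rho}$. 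Conversely, if $U_{\fl{H}0} = U_{\rho}$ then $U_{\rho}$ is $\fl{H}$-invariant, in particular $\alpha_0$-invariant, so $\widehat{U}_{\rho} = U_{\rho}$ is compact. The main obstacle I anticipate is not any single deep fact but the bookkeeping in part (a): making the passage from the ``finite-level'' identity $V_{\rho} = V\cap U_{\rho}$ to the ``$\bigcup$-level'' identity $\widehat{U}_{\rho}\cap V = V_{\rho}$ fully rigorous, since $V$ is only assumed tidy and contained in $U$, not $\alpha_0$-invariant, so one cannot simply apply $\alpha_0^{-n}$ termwise — one must instead use openness of $V$ and $V_{\rho}$ inside the relevant subgroup topologies.
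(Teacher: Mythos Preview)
Your plans for (b) and (c) are essentially the paper's argument, modulo a sign slip: for $\rho(\alpha_0)=1$ one has $\alpha_0(U_\rho)\supseteq U_\rho$ (not $\subseteq$), since $\alpha_0(U_\rho)=\bigcap_{\rho(\delta)\ge 1}\delta(U)\supseteq\bigcap_{\rho(\delta)\ge 0}\delta(U)=U_\rho$, so the increasing chain is $\alpha_0^{\,n}(U_\rho)$. This is harmless for the structure of (b) and (c).

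The gap is in part (a). Your intermediate identity $V_\rho=V\cap U_\rho$ is fine (though it comes from Proposition~1.1.6, the bounded-orbit characterisation of $V_{\beta+}$, not from Proposition~1.1.7; the latter, as used in Lemma~\ref{Lemma: flat U decomposition into eigenfactors}, only gives $U\cap\beta^n(U)\le\beta(U)$). The real issue is the ``promotion'' you flag yourself. Your proposed fix --- openness of $V$ and $V_\rho$ in the relevant subgroup topologies --- does not do the job: knowing $x\in V$ and $x\in\alpha_0^{\,n}(U_\rho)$, openness alone gives no control over $\alpha_0^{-n}(x)$ relative to $V$, and your suggested target ``$\widehat U_\rho\cap V$ covered by $\alpha_0^{-n}(V_\rho)$'' is only $\widehat U_\rho\cap V\subseteq\widehat V_\rho$, which is strictly weaker than $\subseteq V_\rho$ and leads back to the same problem with $U$ replaced by $V$.

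What actually closes the gap is exactly the tool the paper uses: for $x\in\alpha(U_\rho)\cap V$ and any $\beta$ with $\rho(\beta)\ge 0$, one shows the backward orbit $\{\beta^{-n}(x)\}_{n\ge 0}$ stays inside the fixed compact set $\alpha(U_\rho)$, and then Proposition~1.1.6 yields $x\in V_{\beta+}$, hence $x\in V_\rho$. The paper obtains this via the commutator identity $\alpha^{-1}\beta^{-n}(x)=[\alpha,\beta^n]\,\beta^{-n}\alpha^{-1}(x)$ together with $[\alpha,\beta^n]\in\fl H_{\mathrm u}$ and the $\fl H_{\mathrm u}$-invariance of each $\gamma(U)$, which forces $[\alpha,\beta^n](U_\rho)=U_\rho$. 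Your observation that $U_\rho$ is $\ker(\rho)$-invariant (equivalently, $\alpha(U_\rho)$ depends only on $\rho(\alpha)$) is the same fact in disguise and would let you run the bounded-orbit argument just as well --- but it is that argument, not an openness argument, that you need. Once you accept this, the detour through $V_\rho=V\cap U_\rho$ becomes unnecessary: the paper's direct route is simply the efficient form of your plan.
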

\begin{proof}
	One inclusion in part $(a)$ is clear. Thus let $x \in \widehat{U}_{\rho} \cap V$. Then $x\in \alpha(U_{\rho})$ for some $\alpha \in \fl{H}$. Let $\beta \in \fl{H}$ with $\rho(\beta) \geq 0$. Now \cite[Corollary~$2.5.3$]{willis2025flatgroups} and \cite[Corollary~$2.1.4$]{willis2025flatgroups} yield for each $n\in \mathbb{N}$:
	\[ \alpha^{-1} \beta^{-n}(x) = [ \alpha, \beta^n ] \beta^{-n} \alpha^{-1}(x) \in [ \alpha, \beta^n ](U_{\rho}) = U_{\rho}. \]
	For the last equation we note that for each $\gamma \in \fl{H}$ the subgroup $\gamma(U)$ is tidy for $\fl{H}$ and, as $[\alpha, \beta^n] \in \fl{H}_{\mathrm{u}}$, we have $[ \alpha, \beta^n ](\gamma(U)) = \gamma(U)$. This yields $[ \alpha, \beta^n ](U_{\rho}) = U_{\rho}$.
	
	This implies $\beta^{-n}(x) \in \alpha(U_{\rho})$ for all $n\geq 0$. As $\alpha(U_{\rho})$ is compact and $V$ is tidy for $\fl{H}$, we deduce from \cite[Proposition~$1.1.6$]{willis2025flatgroups} that $x\in V_{\beta^{-1} -} = V_{\beta +} \subseteq \beta(V)$. As this holds for any $\beta \in \fl{H}$ with $\rho(\beta) \geq 0$, we conclude that $x\in V_{\rho}$ This proves $(a)$.
	
	Recall that $\alpha(U_{\rho}) = \beta(U_{\rho})$ for $\alpha, \beta \in \fl{H}$ with $\rho(\alpha) = \rho(\beta)$. Let $\beta \in \fl{H}$ with $\rho(\beta) = 1$. Then $\alpha(U_{\rho}) = \beta^z(U_{\rho})$ for some $z\in \Zb$. In particular, $\widehat{U}_{\rho} = \bigcup_{z\in \Zb} \beta^z(U_{\rho})$ is group because it is a nested union of subgroups of $G$. As in \cite[Proposition~$2.2.5$]{willis2025flatgroups}, $\widehat{U}_{\rho}$ is closed if $\widehat{U}_{\rho} \cap U$ is closed (cf.\ \cite[Theorem II.5.9]{HR79}). But this follows from $(a)$.
	
	Suppose first that $\widehat{U}_{\rho}$ is non-compact. Then $U_{\fl{H}0} = U_{\rho}$ would imply that $\widehat{U}_{\rho} = U_{\fl{H}0}$ is compact which is a contradiction. Thus we can assume $U_{\fl{H}0} < U_{\rho}$. Note that $U_{\rho}$ is tidy for $\fl{H} \vert_{\widehat{U}_{\rho}}$ by \cite[Proposition~$1.1.5$]{willis2025flatgroups}. As $U_{\fl{H}0} < U_{\rho}$, there exists $\alpha \in \fl{H}$ with $U_{\rho} \not\leq \alpha(U)$ and, hence, $\alpha^{-1}(U_{\rho}) \not\leq U$. In particular, we have $\alpha\inv(U_{\rho}) \not\leq U_{\rho}$, hence $\alpha^{-1}(U_{\rho}) > U_{\rho}$. This implies $s(\alpha^{-1} \vert_{\widehat{U}_{\rho}}) >1$ and $\widehat{U}_{\rho}$ is non-compact. 
\end{proof}

\begin{theorem}\label{Theorem: homomorphisms and roots}
	Let $U, V\in \COS(G)$ be tidy for $\fl{H}$ and let $\rho: \fl{H} \to \Zb$ be a surjective homomorphism. Then the following holds:
	\[ U_{\fl{H}0} < U_{\rho} \Longleftrightarrow V_{\fl{H}0} < V_{\rho}. \]
\end{theorem}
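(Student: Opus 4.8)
The plan is to reduce the statement, via Lemma~\ref{Lemma: root implies non-compact and closed}(c), to the equivalence ``$\widehat{U}_\rho$ is non-compact $\iff \widehat{V}_\rho$ is non-compact'', and then to show that non-compactness of $\widehat{U}_\rho$ already forces $\rho$ to be a root of $\fl{H}$ in the sense of \cite[Definition~$2.2.3$]{willis2025flatgroups}. Since membership in $\Phi(\fl{H})$ is defined without reference to any particular tidy subgroup, it then transfers back to $V$ and the theorem follows.

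So suppose $U_{\fl{H}0} < U_\rho$. By Lemma~\ref{Lemma: root implies non-compact and closed}(c) the group $\widehat{U}_\rho$ is non-compact, and by Lemma~\ref{Lemma: root implies non-compact and closed}(b) it is closed; it is clearly $\fl{H}$-invariant, and $U_\rho$ is compact and relatively open in it. I would fix $\beta \in \fl{H}$ with $\rho(\beta) = 1$ (possible since $\rho$ is surjective), and observe that $\alpha(U_\rho) = \beta^{\rho(\alpha)}(U_\rho)$ for every $\alpha \in \fl{H}$, because both sides equal $\bigcap\{\gamma(U) \mid \gamma \in \fl{H},\ \rho(\gamma) \geq \rho(\alpha)\}$. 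In particular the chain $(\beta^z(U_\rho))_{z \in \Zb}$ is increasing, so $\alpha(U_\rho) \geq U_\rho$ or $\alpha(U_\rho) \leq U_\rho$ for each $\alpha$, while $\widehat{U}_\rho = \bigcup_{\alpha \in \fl{H}} \alpha(U_\rho)$ holds by definition. Hence $\widehat{U}_\rho$ is a subgroup scaling for $\fl{H}$ in the sense of \cite[Definition~$2.2.1$]{willis2025flatgroups}.

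Next I would pin down the modular function of $\widehat{U}_\rho$. Put $s_\rho := [\beta(U_\rho) : U_\rho]$; since $\widehat{U}_\rho$ is non-compact the increasing chain $(\beta^z(U_\rho))_z$ does not stabilise, so $s_\rho \geq 2$. Applying the index-preserving automorphism $\beta^i|_{\widehat{U}_\rho}$ to the inclusion $U_\rho \leq \beta(U_\rho)$ gives $[\beta^{i+1}(U_\rho) : \beta^i(U_\rho)] = s_\rho$ for all $i \geq 0$, whence $[\beta^m(U_\rho) : U_\rho] = s_\rho^m$ for $m \geq 0$ and, symmetrically, $[U_\rho : \beta^m(U_\rho)] = s_\rho^{-m}$ for $m \leq 0$. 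Combined with $\alpha(U_\rho) = \beta^{\rho(\alpha)}(U_\rho)$, this yields $\Delta_{\widehat{U}_\rho}(\alpha\vert_{\widehat{U}_\rho}) = s_\rho^{\rho(\alpha)}$ for every $\alpha \in \fl{H}$, so by \cite[Definition~$2.2.3$]{willis2025flatgroups} we get $\rho \in \Phi(\fl{H})$.

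Finally, since $\rho \in \Phi(\fl{H})$, the fact already used in the proof of Theorem~\ref{Theorem: roots are associated with eigenfactors} (namely \cite[Proposition~$2.2.5$]{willis2025flatgroups}) gives $V_{\fl{H}0} < V_\rho$ for the tidy subgroup $V$, which is the desired conclusion; the reverse implication follows by exchanging the roles of $U$ and $V$. I do not expect a serious obstacle here: Lemma~\ref{Lemma: root implies non-compact and closed} already supplies closedness and non-compactness of $\widehat{U}_\rho$, and the only point needing care is the routine bookkeeping around the identity $\alpha(U_\rho) = \beta^{\rho(\alpha)}(U_\rho)$ that simultaneously verifies the remaining axioms of a scaling subgroup and computes its modular function.
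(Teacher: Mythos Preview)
Your argument is correct, but it takes a genuinely different route from the paper's proof. The paper first reduces to the case $V\leq U$ by passing to $U\cap V$, and then argues \emph{internally}: from Lemma~\ref{Lemma: root implies non-compact and closed}(a) it knows $\widehat{U}_\rho\cap V=V_\rho$, so both $U_\rho$ and $V_\rho$ are compact open (hence tidy by \cite[Proposition~1.1.5]{willis2025flatgroups}) for $\fl{H}\vert_{\widehat{U}_\rho}$, and comparing the scale of a suitable $\beta$ with respect to these two subgroups transfers the strict inequality from $U$ to $V$. No appeal to Willis's root framework is needed.

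You instead go \emph{externally}: you show directly that $\widehat{U}_\rho$ is a scaling subgroup and compute its modular function, thereby establishing $\rho\in\Phi(\fl{H})$ in the sense of \cite[Definition~$2.2.3$]{willis2025flatgroups}, and then you invoke \cite[Proposition~$2.2.5$]{willis2025flatgroups} (which is tidy-subgroup independent) to come back down to $V$. This is essentially the harder direction $(ii)\Rightarrow(i)$ of Theorem~\ref{Theorem: Equivalent definitions of roots}, proved early. Your approach is conceptually clean because it makes explicit that the obstruction is intrinsic to $\rho$ rather than to the chosen tidy subgroup; the paper's approach is more self-contained and avoids importing \cite[Proposition~$2.2.5$]{willis2025flatgroups} at this stage, keeping Theorems~\ref{Theorem: homomorphisms and roots} and~\ref{Theorem: Equivalent definitions of roots} logically separate.
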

\begin{proof}
	Note that $U\cap V$ is again tidy for $\fl{H}$ by \cite[Theorem~$1.1.8$]{willis2025flatgroups}. Thus it suffices to show the claim for $V \leq U$. 
	
	Suppose first that $V_{\fl{H}0} < V_{\rho}$. Then Lemma~\ref{Lemma: root implies non-compact and closed} implies that $\widehat{V}_{\rho}$ is closed and non-compact, and that $\widehat{U}_{\rho} \cap V = \widehat{V}_{\rho}$. By definition, we have $\widehat{V}_{\rho} \leq \widehat{U}_{\rho}$. Now suppose that $U_{\fl{H}0} = U_{\rho}$. Then $\widehat{U}_{\rho} = U_{\fl{H}0}$ would be compact and $\widehat{V}_{\rho}$ would be compact as well (as a closed subgroup of a compact group). This yields a contradiction and we have $U_{\fl{H}0} < U_{\rho}$.
	
	Now suppose that $U_{\fl{H}0} < U_{\rho}$. By Lemma~\ref{Lemma: root implies non-compact and closed} the subgroup $\widehat{U}_{\rho}$ is closed and non-compact, and we have $\widehat{U}_{\rho} \cap V = V_{\rho}$. As $\widehat{U}_{\rho}$ is non-compact, there exists $\beta \in \fl{H}$ with $\beta(U_{\rho}) > U_{\rho}$. As $U_{\rho}$ is tidy for $\fl{H} \vert_{\widehat{U}_{\rho}}$, we have $s( \beta \vert_{\widehat{U}_{\rho}} ) = [\beta(U_{\rho}): \beta(U_{\rho}) \cap U_{\rho}] = [\beta(U_{\rho}): U_{\rho}] >1$. Note that $\alpha(V_{\rho})$ contains or is contained in $V_{\rho}$ for all $\alpha \in \fl{H}$. As $V_{\rho} = \widehat{U}_{\rho} \cap V$ is compact and open in $\widehat{U}_{\rho}$, $V_{\rho} \leq \widehat{U}_{\rho}$ is tidy for $\fl{H} \vert_{\widehat{U}_{\rho}}$ by \cite[Proposition~$1.1.5$]{willis2025flatgroups}. We infer
	\[ 1 < s(\beta \vert_{\widehat{U}_{\rho}}) = [\beta(V_{\rho}) : \beta(V_{\rho}) \cap V_{\rho}]. \]
	It follows $\beta(V_{\rho}) > V_{\rho}$. As $\beta(V_{\fl{H}0}) = V_{\fl{H}0}$, we deduce $V_{\fl{H}0} < V_{\rho}$ and the claim follows.
\end{proof}

\begin{corollary}
	Let $\rho: \fl{H} \to \Zb$ be a surjective homomorphism and let $U \in \COS(G)$ be tidy for $\fl{H}$. Then the following are equivalent:
	\begin{enumerate}[label=(\roman*)]
		\item $\rho$ is a root.
		
		\item $U_{\fl{H}0} < U_{\rho}$.
	\end{enumerate}
\end{corollary}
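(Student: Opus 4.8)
The plan is to obtain both implications essentially for free from Definition~\ref{Definition: roots Bischof} and Theorem~\ref{Theorem: homomorphisms and roots}; the statement is really just a repackaging of the quantifier in the definition of a root, with Theorem~\ref{Theorem: homomorphisms and roots} serving as the bridge.

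For the implication (i)~$\Rightarrow$~(ii) there is nothing to do beyond unwinding definitions: if $\rho$ is a root, then by Definition~\ref{Definition: roots Bischof} the strict inclusion $U_{\fl{H}0} < U_{\rho}$ holds for \emph{every} compact open subgroup tidy for $\fl{H}$, and in particular for the subgroup $U$ fixed in the statement. For the converse (ii)~$\Rightarrow$~(i), I would start from the hypothesis $U_{\fl{H}0} < U_{\rho}$ and then, to check that $\rho$ meets the defining condition of a root, take an arbitrary $V \in \COS(G)$ tidy for $\fl{H}$ and apply Theorem~\ref{Theorem: homomorphisms and roots} to the pair $(U,V)$: it yields $U_{\fl{H}0} < U_{\rho} \Longleftrightarrow V_{\fl{H}0} < V_{\rho}$, so the left-hand side being true forces $V_{\fl{H}0} < V_{\rho}$. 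Since $V$ was arbitrary, $\rho$ is a root.

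I do not expect a genuine obstacle here, because the substantive content has already been isolated in Theorem~\ref{Theorem: homomorphisms and roots} (which itself rests on Lemma~\ref{Lemma: root implies non-compact and closed} and on the fact that an intersection of two tidy subgroups is tidy). The only point requiring a little care is the quantifier bookkeeping: Definition~\ref{Definition: roots Bischof} asks for the strict inclusion at \emph{all} tidy subgroups, whereas (ii) only asserts it at one, so it is essential to invoke Theorem~\ref{Theorem: homomorphisms and roots} to transport the condition from the given $U$ to every other $V$.
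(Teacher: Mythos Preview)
Your proposal is correct and matches the paper's own proof, which simply states that the corollary follows from Theorem~\ref{Theorem: homomorphisms and roots}. You have spelled out the quantifier bookkeeping that the paper leaves implicit, but the argument is the same.
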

\begin{proof}
	This follows from Theorem~\ref{Theorem: homomorphisms and roots}.
\end{proof}

\begin{lemma}\label{Lemma: s_ell well-defined}
	Let $\rho: \fl{H} \to \Zb$ be a surjective homomorphism and let $U, V \in \COS(G)$ be tidy for $\fl{H}$. Then
	\[ \min\{ [\beta(U_{\rho}) : U_{\rho}] \mid \beta \in \fl{H}, \, \beta(U_{\rho}) > U_{\rho} \} = \min\{ [\beta(V_{\rho}) : V_{\rho}] \mid \beta \in \fl{H}, \, \beta(V_{\rho}) > V_{\rho} \}. \]
\end{lemma}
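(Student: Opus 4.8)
The plan is to reduce to the situation $V\leq U$ and then to recognise both minima as the smallest value exceeding $1$ of the scale function $\beta\mapsto s(\beta \vert_K)$ on the closed subgroup $K:=\widehat{U}_\rho$ of $G$, computed through two different tidy subgroups of $K$.

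First I would reduce to $V\leq U$: the intersection $W:=U\cap V$ is again tidy for $\fl{H}$ by \cite[Theorem~$1.1.8$]{willis2025flatgroups} and is contained in both $U$ and $V$, so applying the case $V\leq U$ to the pairs $W\leq U$ and $W\leq V$ yields the general statement. Assuming now $V\leq U$, I would set $K:=\widehat{U}_\rho=\bigcup_{\alpha\in\fl{H}}\alpha(U_\rho)$. By Lemma~\ref{Lemma: root implies non-compact and closed}(b) this is a closed subgroup of $G$, and it is $\fl{H}$-invariant since $\fl{H}$ is a group, so $\fl{H} \vert_K$ is a group of automorphisms of $K$. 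By Lemma~\ref{Lemma: root implies non-compact and closed}(a) we have $V_\rho=K\cap V$, so $U_\rho$ and $V_\rho$ are both compact open subgroups of $K$. Recalling that $U_\rho=\bigcap\{\alpha(U)\mid\alpha\in\fl{H},\ \rho(\alpha)\geq 0\}$ and likewise for $V_\rho$ (cf.\ \cite[Proposition~$2.2.5$]{willis2025flatgroups}), one reads off that $\alpha(U_\rho)\geq U_\rho$ whenever $\rho(\alpha)\geq 0$ and $\alpha(U_\rho)\leq U_\rho$ whenever $\rho(\alpha)\leq 0$, and similarly for $V_\rho$; in particular $\alpha(U_\rho)$ and $\alpha(V_\rho)$ are comparable to $U_\rho$ and $V_\rho$ for every $\alpha\in\fl{H}$. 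Hence by \cite[Proposition~$1.1.5$]{willis2025flatgroups} both $U_\rho$ and $V_\rho$ are tidy for $\fl{H} \vert_K$.

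The main step is the comparison of the two index sets. If $\beta\in\fl{H}$ satisfies $\beta(U_\rho)>U_\rho$, then $\beta(U_\rho)\cap U_\rho=U_\rho$ and hence $s(\beta \vert_K)=[\beta(U_\rho):\beta(U_\rho)\cap U_\rho]=[\beta(U_\rho):U_\rho]>1$; computing the same scale through the tidy subgroup $V_\rho$ gives $[\beta(V_\rho):\beta(V_\rho)\cap V_\rho]=s(\beta \vert_K)>1$, which forces $\beta(V_\rho)\not\leq V_\rho$, hence $\beta(V_\rho)>V_\rho$ (these subgroups being comparable), and moreover $[\beta(V_\rho):V_\rho]=s(\beta \vert_K)=[\beta(U_\rho):U_\rho]$. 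Exchanging the roles of $U_\rho$ and $V_\rho$ shows conversely that $\beta(V_\rho)>V_\rho$ implies $\beta(U_\rho)>U_\rho$. Therefore $\{\beta\in\fl{H}\mid\beta(U_\rho)>U_\rho\}=\{\beta\in\fl{H}\mid\beta(V_\rho)>V_\rho\}$, and on this common set the two indices agree, so the two minima coincide; when $\rho$ is a root this set is nonempty by Lemma~\ref{Lemma: root implies non-compact and closed}(c), and when $\rho$ is not a root both sets are empty. The only point requiring a small idea is to pass to $K=\widehat{U}_\rho$ and to use that the scale of $\beta \vert_K$ can be read off from either tidy subgroup $U_\rho$ or $V_\rho$; everything else is bookkeeping with Lemma~\ref{Lemma: root implies non-compact and closed} and \cite[Proposition~$1.1.5$]{willis2025flatgroups}.
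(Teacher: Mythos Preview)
Your proof is correct and follows essentially the same route as the paper's: reduce to $V\leq U$ via the tidy intersection, recognise $U_\rho$ and $V_\rho$ as tidy subgroups of $\widehat{U}_\rho$ for $\fl{H}\vert_{\widehat{U}_\rho}$, and equate the indices via the scale. You are slightly more explicit in justifying that $V_\rho$ is compact open in $\widehat{U}_\rho$ (via Lemma~\ref{Lemma: root implies non-compact and closed}(a)) and in handling the edge case where the index sets are empty; as a minor stylistic point, avoid the letter $W$ for $U\cap V$ since it already denotes the Coxeter group.
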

\begin{proof}
	As $U\cap V$ is again tidy for $\fl{H}$ by \cite[Theorem~$1.1.8$]{willis2025flatgroups}, it suffices to show the claim for $V \leq U$. Note that $U_{\rho}$ and $V_{\rho}$ are tidy for $\fl{H} \vert_{\widehat{U}_{\rho}}$. Let $\beta \in \fl{H}$. Then
	\begin{equation}\label{Equation: scale}
		[\beta(U_{\rho}) : \beta(U_{\rho}) \cap U_{\rho}] = s( \beta \vert_{\widehat{U}_{\rho}} ) = [\beta(V_{\rho}) : \beta(V_{\rho}) \cap V_{\rho}].
	\end{equation}
	Moreover, we know that $\beta(U_{\rho})$ (resp.\ $\beta(V_{\rho})$) contains or is contained in $U_{\rho}$ (resp.\ $V_{\rho}$).	Now (\ref{Equation: scale}) implies that $\beta(U_{\rho}) > U_{\rho}$ if and only if $\beta(V_{\rho}) > V_{\rho}$. Using (\ref{Equation: scale}) again, the claim follows.
\end{proof}

\begin{definition}\label{Definition: s_ell}
	Let $\rho: \fl{H} \to \Zb$ be a root in the sense of Definition~\ref{Definition: roots Bischof} and let $U \in \COS(G)$ be tidy for $\fl{H}$. Then we define
	\[ s_{\rho} := \min\{ [\beta(U_{\rho}) : U_{\rho}] \mid \beta \in \fl{H}, \, \beta(U_{\rho}) > U_{\rho} \}. \]
	This is well-defined by Lemma~\ref{Lemma: s_ell well-defined}. Moreover, we have $s_{\rho} >1$ by Lemma~\ref{Lemma: root implies non-compact and closed}.
\end{definition}

\begin{theorem}\label{Theorem: Equivalent definitions of roots}
	Let $\rho: \fl{H} \to \Zb$ be a homomorphism. Then the following are equivalent:
	\begin{enumerate}[label=(\roman*)]
		\item $\rho$ is a root in the sense of \cite[Definition~$2.2.3$]{willis2025flatgroups}.
		
		\item $\rho$ is a root in the sense of Definition~\ref{Definition: roots Bischof}.
	\end{enumerate}
\end{theorem}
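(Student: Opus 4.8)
The plan is to prove the two implications separately, with the subgroup $\widehat{U}_{\rho}$ attached to a tidy $U$ serving as the bridge between the two notions. Recall also that by Theorem~\ref{Theorem: homomorphisms and roots} the condition $U_{\fl{H}0} < U_{\rho}$ is independent of the choice of $U \in \COS(G)$ tidy for $\fl{H}$, so being a root in the sense of Definition~\ref{Definition: roots Bischof} only has to be checked for one such $U$. For $(i)\Rightarrow(ii)$, assume $\rho$ is a root in the sense of \cite[Definition~2.2.3]{willis2025flatgroups} and let $U$ be tidy for $\fl{H}$. Then \cite[Proposition~2.2.5]{willis2025flatgroups} gives that $\widehat{U}_{\rho}$ is a scaling subgroup for $\fl{H}$; in particular it is non-compact, so Lemma~\ref{Lemma: root implies non-compact and closed}(c) yields $U_{\fl{H}0} < U_{\rho}$, and hence $\rho$ is a root in the sense of Definition~\ref{Definition: roots Bischof}.

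For $(ii)\Rightarrow(i)$, assume $\rho$ is surjective and $U_{\fl{H}0} < U_{\rho}$ for some tidy $U$, and put $H := \widehat{U}_{\rho}$. The first step is to record that $H$ is a legitimate candidate: it is $\fl{H}$-invariant by construction, closed by Lemma~\ref{Lemma: root implies non-compact and closed}(b), non-compact by part (c), and $H \cap U = U_{\rho}$ by part (a), so $U_{\rho}$ is a compact open subgroup of $H$. To verify the scaling axioms for $(H, U_{\rho})$, I would observe that $U_{\rho} = U_{\mathbf{a}}$ for $\mathbf{a} := \{\alpha \in \fl{H} \mid \rho(\alpha) \geq 0\}$: indeed $U_{\rho} = \bigcap\{\alpha(U) \mid \rho(\alpha) \geq 0\}$, and since $\rho(\alpha^{n}) \geq 0$ for all $n \geq 0$ when $\rho(\alpha) \geq 0$ one gets $U_{\rho} \subseteq \bigcap_{n \geq 0}\alpha^{n}(U) = U_{\alpha+}$, while conversely $U_{\alpha+} \subseteq \alpha(U)$. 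Then, exactly as in the proof of Lemma~\ref{Lemma: U-eigenfactors} (using \cite[Lemma~2.4.3]{willis2025flatgroups}), $\alpha(U_{\rho}) \geq U_{\rho}$ whenever $\rho(\alpha) \geq 0$, and applying this to $\alpha^{-1}$ gives $\alpha(U_{\rho}) \leq U_{\rho}$ whenever $\rho(\alpha) \leq 0$. In particular $\alpha(U_{\rho})$ is comparable to $U_{\rho}$ for every $\alpha \in \fl{H}$, and $H = \bigcup_{\alpha \in \fl{H}}\alpha(U_{\rho})$ by definition, so $H$ is a scaling subgroup for $\fl{H}$.

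It then remains to identify the modular function of $H$ with a power of $\rho$. Since $\{\delta \in \fl{H} \mid \rho(\delta) \geq 0\}$ is invariant under left multiplication by $\ker\rho$, the kernel of $\rho$ fixes $U_{\rho}$, so $\alpha(U_{\rho})$ depends only on $\rho(\alpha)$; fixing $\beta \in \fl{H}$ with $\rho(\beta) = 1$ (which exists by surjectivity) we obtain $\alpha(U_{\rho}) = \beta^{\rho(\alpha)}(U_{\rho})$ for all $\alpha$. Computing the modular function on the compact open subgroup $U_{\rho}$ as in Remark~\ref{Remark: tidy subgroup of unreduced group}, together with the telescoping identity $[\beta^{n}(U_{\rho}):U_{\rho}] = [\beta(U_{\rho}):U_{\rho}]^{n}$ for $n \geq 0$ and its analogue for negative exponents, yields $\Delta_{H}(\alpha\vert_{H}) = s_{\rho}^{\rho(\alpha)}$ for all $\alpha \in \fl{H}$, where $s_{\rho} := [\beta(U_{\rho}):U_{\rho}]$. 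This index is a positive integer because $U_{\rho}$ is open in the compact group $U$, and it is strictly greater than $1$ because otherwise $\alpha(U_{\rho}) = U_{\rho}$ for all $\alpha$ and $H = U_{\rho}$ would be compact. Hence $\rho$ satisfies \cite[Definition~2.2.3]{willis2025flatgroups}.

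I expect the forward implication to be immediate from the cited material, so the substance lies in the reverse one. There the main obstacle is checking that $\widehat{U}_{\rho}$ is genuinely a scaling subgroup — that is, the comparability $\alpha(U_{\rho}) \leq U_{\rho}$ or $\alpha(U_{\rho}) \geq U_{\rho}$, which rests on the identification $U_{\rho} = U_{\mathbf{a}}$ — together with the bookkeeping showing that $[\beta(U_{\rho}):U_{\rho}]$ is independent of the choice of $\beta$ with $\rho(\beta) = 1$, so that it provides a well-defined integral base $s_{\rho}$ for the modular function.
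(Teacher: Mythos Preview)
Your proof is correct and follows essentially the same route as the paper: both directions hinge on Lemma~\ref{Lemma: root implies non-compact and closed} and on exhibiting $\widehat{U}_{\rho}$ as a scaling subgroup with $\Delta_{\widehat{U}_{\rho}}(\alpha) = s_{\rho}^{\rho(\alpha)}$. The only cosmetic differences are that you justify the comparability $\alpha(U_{\rho}) \gtrless U_{\rho}$ via the identification $U_{\rho} = U_{\mathbf{a}}$ and \cite[Lemma~2.4.3]{willis2025flatgroups} (the paper asserts it directly from the shape of $U_{\rho}$), and that you compute the modular function by telescoping rather than through \cite[Theorem~1.1.10]{willis2025flatgroups} and the minimum-index definition of $s_{\rho}$.
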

\begin{proof}
	Assume that $\rho$ is a root in the sense of \cite[Definition~$2.2.3$]{willis2025flatgroups} and let $U \in \COS(G)$ be tidy for $\fl{H}$. Then \cite[Proposition~$2.2.5$]{willis2025flatgroups} implies that $\widehat{U}_{\rho}$ is a scaling subgroup for $\fl{H}$. In particular, we have $U_{\fl{H}0} < U_{\rho}$ (as otherwise $\widehat{U}_{\rho} = U_{\fl{H}0}$ would be compact) and $\rho$ is a root in the sense of Definition~\ref{Definition: roots Bischof}.
	
	Now suppose that $\rho$ is a root in the sense of Definition~\ref{Definition: roots Bischof} and let $U \in \COS(G)$ be tidy for $\fl{H}$. Then $\widehat{U}_{\rho}$ is closed and non-compact, and we have $\widehat{U}_{\rho} \cap U = U_{\rho}$ by Lemma~\ref{Lemma: root implies non-compact and closed}. For $\alpha \in \fl{H}$ we have $\alpha(U_{\rho}) \geq U_{\rho}$ or $\alpha(U_{\rho}) \leq U_{\rho}$. Thus $\widehat{U}_{\rho}$ is scaling for $\fl{H}$. To see that $\rho$ is a root in the sense of \cite[Definition~$2.2.3$]{willis2025flatgroups} it suffices to show that there exists $s_{\rho} \in \Nb$, greater than $1$, with
	\[ \Delta_{\widehat{U}_{\rho}}(\alpha \vert_{\widehat{U}_{\rho}} ) = s_{\rho}^{\rho(\alpha)} \quad \text{for every $\alpha \in \fl{H}$}. \]
	Recall that $U_{\rho}$ is tidy for $\fl{H} \vert_{\widehat{U}_{\rho}}$ and \cite[Theorem~$1.1.10$]{willis2025flatgroups} implies
	\[ \Delta_{\widehat{U}_{\rho}}(\alpha \vert_{\widehat{U}_{\rho}} ) = s(\alpha \vert_{\widehat{U}_{\rho}}) / s(\alpha^{-1} \vert_{\widehat{U}_{\rho}}) = \begin{cases}
		[ \alpha(U_{\rho}) : U_{\rho} ] & \text{, } \alpha(U_{\rho}) \geq U_{\rho} \\
		[ U_{\rho} : \alpha(U_{\rho}) ]^{-1} & \text{, } \alpha(U_{\rho}) \leq U_{\rho}
	\end{cases} \]
	Define $s_{\rho} \in \Nb$ as in Definition~\ref{Definition: s_ell}. Note that $\alpha \in \fl{H}$ with $\rho(\alpha) = 0$ satisfies $\alpha(U_{\rho}) = U_{\rho}$. Recall that for $\alpha, \beta \in \fl{H}$ with $\rho(\alpha) = \rho(\beta)$ we have $\alpha(U_{\rho}) = \beta(U_{\rho})$. Let $\beta_* \in \fl{H}$ be such that $\rho(\beta_*) = 1$. Then for $\alpha \in \fl{H}$ there exists $z\in \Zb$ with $\rho(\alpha) = \rho(\beta_*^z) = z$. We conclude
	\[ \Delta_{\widehat{U}_{\rho}}(\alpha \vert_{\widehat{U}_{\rho}} ) = \begin{cases}
		[ \alpha(U_{\rho}) : U_{\rho} ] & \text{, } \alpha(U_{\rho}) \geq U_{\rho} \\
		[ U_{\rho} : \alpha(U_{\rho}) ]^{-1} & \text{, } \alpha(U_{\rho}) \leq U_{\rho}
	\end{cases} = \begin{cases}
		s_{\rho}^z & \text{, } \alpha(U_{\rho}) \geq U_{\rho} \\
		(s_{\rho}^{-z})^{-1} & \text{, } \alpha(U_{\rho}) \leq U_{\rho}
	\end{cases} = s_{\rho}^{\rho(\alpha)}. \qedhere \]
\end{proof}

\newpage
\section{Exercises}\label{sec:Bischof_exercises}

\begin{enumerate}
	\item Prove that the pair $(W, S)$ from Example~\ref{Example: Coxeter system} is indeed a Coxeter system.
	
	\item\label{Exercise: Extension of bijection to automorphism} Let $(W, S)$ be a Coxeter system and let $\sigma: S \to S$ a bijection with $m_{\sigma(s) \sigma(t)} = m_{st}$ for all $s, t \in S$. Show that $\sigma$ extends to an automorphism of $W$.
	
	\item\label{Exercise: roots} Let $(W, S)$ be a Coxeter system, let $\phi \in \Aut(\Sigma(W, S))$ be an automorphism and let $\alpha \in \Phi$ be a root. Show that $-\phi(\alpha) = \phi(-\alpha)$ holds.
	
	\item Let $(W, S)$ be a Coxeter system. Show that $\Sigma(W, S)$ as defined in Example~\ref{Example: building} is a building of type $(W, S)$. Show also that $W$ acts on $\Sigma(W, S)$ by multiplication from the left.
	
	\item\label{Exercise: proj isom commute} Prove Lemma \ref{Lemma: projection and isometry commute}. What happens, if $\phi$ is not special?
	
	\item\label{Exercise: Convex sets} Let $\Delta = (\mc{C}, \delta)$ be a building of type $(W, S)$. Let $A \subseteq B$ be subsets of $\mc{C}$.
	\begin{enumerate}
		\item Show that $\conv(A) \subseteq \conv(B)$.
		
		\item Show that if $A$ is convex, then $\conv(A) = A$.
	\end{enumerate}
	
	\item\label{Exercise: Isometry} Let $\Delta = (\mc{C}, \delta)$ be a building of type $(W, S)$ and let $\phi \in \Aut_0(\Delta)$. Thow that $\phi$ preserves $\delta$, i.e.\ $\delta(\phi(c), \phi(d)) = \delta(c, d)$ for all $c, d \in \mc{C}$.
	
	\item\label{Exercise: Computation of set of roots} Compute the set of roots $\Phi(W, S)$ for $\ldots$
	\begin{enumerate}
		\item $\ldots$ $(W, S)$ as in Example~\ref{Example: Coxeter system}.
		
		\item $\ldots$ $W = D_{\infty} \cong \langle s, t \mid s^2 = t^2 = 1 \rangle$ and $S = \{ s, t \}$.
	\end{enumerate}
	
	\item\label{Exercise: compact open subgroup} Prove Lemma \ref{Lemma: compact open subgroup}.
	
	\item\label{Exercise: projection} Let $\Delta = (\mc{C}, \delta)$ be a building of type $(W, S)$ and let $(d_0, \ldots, d_k)$ be a minimal gallery. Let $1 \leq i \leq k$ and let $P_i$ be the panel containing $d_{i-1}$ and $d_i$. Then $\proj_{P_i} d_0 = d_{i-1}$.
	
	\item\label{Exercise: translations} Let $\Delta = (\mc{C}, \delta)$ be a building of type $\tilde{X}_n$, let $R$ be a gem and let $c, d \in R$ be opposite in $R$. Show the following:
	\begin{enumerate}
		\item $[R, c] = -[R, d]$;
		
		\item $t_{R, c, \alpha}^{-1} = t_{R, d, -\alpha}$.
		
		Hint: Use Exercise~\ref{Exercise: roots}.
	\end{enumerate}
\end{enumerate}

\bibliographystyle{amsalpha}
\bibliography{references}

\end{document}